\newcommand{\dd}[1]{\mathrm{d}{#1}}
\DeclareMathOperator{\R}{{\mathbb{R}}}
\DeclareMathOperator{\I}{\mathscr{I}}
\DeclareMathOperator{\J}{\mathcal{J}(\delta)}
\DeclareMathOperator{\Ir}{\mathscr{I}_\mathrm{rem}(\delta)}
\DeclareMathOperator{\dl}{\delta}
\newcommand{\card}[1]{\mathrm{card}(#1)}
\newtheorem{theorem}{Theorem}[section]
\newtheorem{remark}[theorem]{Remark}
\newtheorem{lemma}[theorem]{Lemma}
\newtheorem{prop}[theorem]{Proposition}
\newtheorem{corollary}[theorem]{Corollary}
\newtheorem{claim}{Claim}[section]
\newtheorem{definition}[theorem]{Definition}
\DeclareMathOperator{\dist}{dist}
\DeclareMathOperator{\lamp}{\Lambda({\mathit p})}
\title[Bochner--Riesz operators with rough boundary]{Improved $L^p$ bounds for Bochner--Riesz Operators associated with rough convex domains in the plane}
\author{Hrit Roy }
\date{September 2023}
\begin{document}

\maketitle
\begin{abstract}
    We consider generalized Bochner--Riesz operators associated with planar convex domains. We construct domains which yield improved $L^p$ bounds for the Bochner--Riesz over previous results of Seeger--Ziesler and Cladek. The constructions are based on the existence of large $B_m$ sets due to Bose--Chowla, and large $\Lambda(p)$ sets due to Bourgain. 
\end{abstract}
\section{Introduction}\label{sec: main result}
\subsection{Main results}
Let $\Omega\subset\R^2$ be an open, bounded, convex set that contains the origin in its interior. We call such a set a \textit{convex domain}. Let $\rho$ be the associated Minkowski functional, which is defined as \begin{equation}\rho(\xi):=\inf\{t>0:\xi\in t\Omega\},\qquad\xi\in\R^2.\end{equation}
The Bochner--Riesz operator of order $\alpha>0$ associated with $\Omega$ is given by\footnote{Here $x_+:=\max\{x,0\}$.}$$(B^\alpha_\Omega f)\;\widehat{}\;(\xi):=(1-\rho(\xi))_+^\alpha\widehat{f}(\xi).$$ Let $\kappa_\Omega\in[0,\frac{1}{2}]$ be the affine dimension of $\Omega$ as defined in \cite{SZ}, which measures how curved the boundary of $\Omega$ is. For example, domains with flat boundary such as polygons have dimension $0$; domains with boundaries of non-vanishing curvature such as $C^2$ domains have dimension $1/2$. We recall the precise definition of $\kappa_\Omega$ in \S\ref{sec: background} below. We have the following result of Seeger--Ziesler on the boundedness of $B^\alpha_\Omega$.
\begin{theorem}[{Seeger--Ziesler \cite[Theorem 1.1]{SZ}}]\label{thm: seeger--ziesler}
  For all convex domains $\Omega$ with affine dimension $\kappa_\Omega$, the Bochner--Riesz means $B^\alpha_\Omega$ are bounded on $L^q(\R^2)$ for  
  \begin{align}
  \alpha>0\qquad&\text{when}\qquad 2\leq q\leq 4,\\\label{eqn:sz range}
  \alpha>\kappa_\Omega(1-4/q)\qquad&\text{when}\qquad 4\leq q\leq\infty.
  \end{align}
\end{theorem}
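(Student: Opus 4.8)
The plan is to peel off the multiplier in dyadic layers according to the distance to $\partial\Omega$, prove a scale-uniform operator bound on each layer, and sum a geometric series.

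Fix a smooth partition of unity $\psi_0(t)+\sum_{j\ge1}\psi(2^jt)=1$ on $[0,1]$ with $\psi$ supported in $(1/2,2)$ and $\psi_0$ supported in $[1/4,\infty)$, and write
\begin{align}\label{eq:plan-decomp}
(1-\rho(\xi))_+^\alpha=\phi_0(\xi)+\sum_{j\ge1}m_j(\xi),\qquad m_j(\xi):=\psi\big(2^j(1-\rho(\xi))\big)\,(1-\rho(\xi))^\alpha,
\end{align}
where $\phi_0$ is the complementary piece, supported in $\{\rho(\xi)\le 3/4\}$, and each $m_j$ lives in the strip $S_j:=\{\xi:1-\rho(\xi)\approx 2^{-j}\}$ around $\partial\Omega$ with $\|m_j\|_\infty\approx 2^{-j\alpha}$. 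The operator attached to $\phi_0$ has frequency support away from $\partial\Omega$, where $\rho$ is bounded; it is bounded on every $L^q(\R^2)$ by standard arguments (cf.\ \cite{SZ}), so this term never obstructs the conclusion and we concentrate on the layers $m_j$. Writing $\delta:=2^{-j}$ and letting $T_j$ be the multiplier operator with symbol $\delta^{-\alpha}m_j$, which now has size $\approx 1$, it suffices to prove, for every $\epsilon>0$,
\begin{align}\label{eq:plan-goal}
\|T_j\|_{L^q\to L^q}\lesssim_{\epsilon}\delta^{-\epsilon}\quad(2\le q\le 4),\qquad
\|T_j\|_{L^q\to L^q}\lesssim_{\epsilon}\delta^{-\kappa_\Omega(1-4/q)-\epsilon}\quad(4\le q\le\infty);
\end{align}
multiplying back by the factor $\delta^{\alpha}=2^{-j\alpha}$ from $\|m_j\|_\infty$ and summing over $j$ converges precisely when $\alpha$ exceeds the stated threshold.

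By Riesz--Thorin interpolation, \eqref{eq:plan-goal} reduces to two endpoint bounds for $T_j$: the $L^4$ estimate $\|T_j\|_{4\to4}\lesssim_{\epsilon}\delta^{-\epsilon}$ and the $L^\infty$ estimate $\|T_j\|_{\infty\to\infty}\lesssim_{\epsilon}\delta^{-\kappa_\Omega-\epsilon}$ --- interpolating these covers $q\ge4$, while interpolating the $L^4$ bound with the trivial Plancherel bound $\|T_j\|_{2\to2}\lesssim1$ covers $2\le q\le4$. Both endpoints rest on decomposing $S_j$ into \emph{plates} $\{P_{j,\nu}\}_\nu$, where $P_{j,\nu}$ is the $\delta$-neighbourhood of a boundary arc that is \emph{affine of size $\delta$}: in affine-invariant coordinates adapted to the arc it is a standard parabolic arc of unit length and height $\delta$. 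Then $T_j=\sum_\nu T_{j,\nu}$, the collection $\{P_{j,\nu}\}$ covers $S_j$ with bounded overlap, and near a corner of $\Omega$ the decomposition degenerates harmlessly to $\delta\times\delta$ boxes, which is how the non-smoothness of $\rho$ is absorbed. For the $L^\infty$ bound one notes that the convolution kernel of $T_{j,\nu}$ is a bump of $L^1$ norm $\approx1$ adapted to the dual of $P_{j,\nu}$, so the triangle inequality gives $\|T_j\|_{\infty\to\infty}\lesssim N_\delta$, the number of affine arcs of size $\delta$; by the definition of affine dimension recalled in \S\ref{sec: background}, $N_\delta\lesssim_{\epsilon}\delta^{-\kappa_\Omega-\epsilon}$.

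For the $L^4$ bound I would run a C\'ordoba-type square-function argument adapted to the convex curve $\partial\Omega$. The key geometric input --- which substitutes for non-vanishing curvature and follows from convexity alone, via the monotonicity of the tangent directions along $\partial\Omega$ --- is that both the sumsets $\{P_{j,\nu}+P_{j,\mu}\}$ and the difference sets $\{P_{j,\nu}-P_{j,\mu}\}$ have bounded overlap. Expanding $\|T_j f\|_4^2=\big\|\big(\sum_\nu T_{j,\nu}f\big)^2\big\|_2$ and applying Plancherel with the sumset overlap yields $\|T_j f\|_4\lesssim\big\|\big(\sum_\nu|T_{j,\nu}f|^2\big)^{1/2}\big\|_4$; a dual use of the difference-set overlap together with the bounded-overlap cover of $S_j$ by the plates --- the $L^4$ square-function estimate for $\partial\Omega$ --- bounds the right-hand side by $\lesssim_{\epsilon}\delta^{-\epsilon}\|f\|_4$, the subpolynomial loss absorbing the possibly large number of plates. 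Interpolating as above and summing over $j$ completes the proof.

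The step I expect to be the main obstacle is precisely this $L^4$ square-function estimate for a merely convex (rather than $C^2$) boundary: verifying the bounded-overlap properties of the sum- and difference-sets of the plates uniformly over all convex domains, and extracting only a subpolynomial loss from a potentially large collection of affine arcs. For $C^2$ domains this reduces to C\'ordoba's classical $L^4$ argument for the circle; the real content is the affine-invariant bookkeeping that makes it go through in the rough setting and produces the exponent $\kappa_\Omega(1-4/q)$ in the $L^q$ range.
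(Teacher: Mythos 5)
Your overall architecture is the same as the one behind this theorem (which the paper only imports from \cite{SZ} and recovers via $\mathcal{E}_{2,\Omega}=0$ combined with Theorem \ref{thm: bochner riesz bounds}, the $L^\infty$ bound of Proposition \ref{prop: BR infty dimension est}, and interpolation): dyadic shells around $\partial\Omega$, a cap decomposition of each shell, an $L^4$ biorthogonality estimate, an $L^\infty$ cap-counting estimate, interpolation, and summation in $j$. The genuine gap is the key $L^4$ input, which you assert rather than prove, and which is false as stated: for a general convex domain the \emph{full} family of $\delta$-caps does not have boundedly overlapping pairwise sumsets, and monotonicity of tangent directions does not rescue this. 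Concretely, take a domain with one flat edge, so that one cap $P_0$ is essentially a $1\times\delta$ slab, together with $\approx\delta^{-1/2}$ caps of width $\approx\delta^{1/2}$ along a curved portion of the boundary; the sets $P_0+P_\nu$, as $\nu$ ranges over the small caps, are nearly identical translates of the slab, so a single point $\xi$ lies in $\approx\delta^{-1/2}$ of the sumsets. The actual content of Seeger--Ziesler's Lemma 2.4 (recalled in the remark in \S\ref{sec: BR}) is that one must first partition the caps into $O(\log\delta^{-1})$ subfamilies of comparable size/eccentricity, and only \emph{within} each subfamily do the sums $B_{1,i}+B_{2,i}$ have overlap at most $2$; this partition is precisely the statement $\mathcal{E}_{2,\Omega}=0$, it is the heart of the theorem, and it is what your sketch omits. (The logarithmic number of subfamilies is what ultimately makes the loss subpolynomial, as you anticipated, but anticipating the loss is not the same as producing the partition.)

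The second half of your $L^4$ argument also does not work as written: the bound $\|(\sum_\nu|T_{j,\nu}f|^2)^{1/2}\|_4\lesssim_\epsilon\delta^{-\epsilon}\|f\|_4$ cannot come from ``a dual use of the difference-set overlap,'' since every set $P_{j,\nu}-P_{j,\nu}$ contains the origin and these sets therefore overlap unboundedly; even for the parabola this step classically requires a Nikodym-type maximal function, which is not available uniformly over rough convex domains. The way \cite{SZ} and \cite{cladek_BR} close the argument (mirrored in \S\ref{sec: BR} here) is instead through $L^1$ kernel estimates for the individual cap pieces, valid for merely convex $\rho$ with a $\log\delta^{-1}$ loss (Proposition \ref{prop: kernel estimate}), combined with a vector-valued interpolation between an $\ell^2L^2$ bound from Plancherel and an $\ell^\infty L^\infty$ bound from those kernel estimates (Lemma \ref{lemma: mixed norm interpolation}); this is also what feeds the energy machinery of Proposition \ref{prop: BR 2n energy est}, from which the $q=4$ case follows with $n=2$. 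Your $L^\infty$ endpoint and the summation over dyadic shells are fine modulo the same kernel estimates, but without the logarithmic partition of caps and a substitute for the maximal-function step, the proof of the $L^4$ endpoint --- and hence of the exponent $\kappa_\Omega(1-4/q)$ --- is missing.
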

They also showed \cite[\S4]{SZ} that for all $\kappa\in[0,\frac{1}{2}]$, there exists a convex domain $\Omega$ with $\kappa_\Omega=\kappa$, for which \eqref{eqn:sz range} is sharp. 
However, Cladek \cite{cladek_BR} constructed certain domains for which \eqref{eqn:sz range} can be improved. 
\begin{theorem}[{Cladek \cite[Theorem 1.1]{cladek_BR}}]\label{thm: cladek}
    For all integer $m\geq 2$ and $\kappa\in(\frac{1}{4m+2},\frac{1}{4m-2}]$, there exists a convex domain $\Omega$ with $\kappa_\Omega=\kappa$ such that $B^\alpha_\Omega$ is bounded on $L^q(\R^2)$ for \begin{align}
        \alpha>\kappa_\Omega(1/2-2/q)\qquad&\text{when}\qquad4\leq q\leq 2m\label{eqn: cladek range a},\\
        \alpha>\kappa_\Omega\big(1-\frac{m+2}{q}\big)\qquad&\text{when}\qquad 2m\leq q\leq\infty.\label{eqn: cladek range b}
    \end{align}
\end{theorem}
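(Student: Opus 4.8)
The plan is to split the work into a geometric stage (building $\Omega$) and an analytic stage (the $L^q$ bounds), the geometric stage being the real obstacle. For the construction I would start from the Bose--Chowla theorem: for every $N$ there is a set $A\subset\{1,\dots,N\}$ with $|A|\gtrsim N^{1/m}$ all of whose $m$-fold sums $a_1+\cdots+a_m$ ($a_i\in A$) are distinct as multisets -- a $B_m$ set. I would then build a self-similar convex curve $\partial\Omega$ whose sub-arcs at each dyadic scale are organized according to a rescaled copy of such an $A$, arranged so that at scale $\delta=2^{-j}$ the boundary splits into $\approx\delta^{-\kappa}$ Bochner--Riesz caps whose tangent directions form a $B_m$ set. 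The awkward window $\kappa\in(\tfrac1{4m+2},\tfrac1{4m-2}]$ is exactly what makes the bookkeeping close: it is the range of self-similar dimensions in which the size $|A|\sim N^{1/m}$ can be matched against the requirement that the pieces be honest Bochner--Riesz caps, and inside that window one tunes the iteration so that $\kappa_\Omega$ takes any prescribed value. One then has to check that the resulting curve is genuinely convex, that the cap-counting function computes affine dimension exactly $\kappa$, and that the $B_m$ structure of the cap directions survives at every scale.

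With $\Omega$ fixed I would make the usual reductions. Decompose $B^\alpha_\Omega=\sum_{j\ge0}B_j$, where the multiplier of $B_j$ localizes $1-\rho(\xi)$ to $[2^{-j-1},2^{-j}]$, hence is $\approx 2^{-j\alpha}$ times a smooth bump on the $2^{-j}$-neighbourhood of $\partial\Omega$; then write $B_j=\sum_\theta B_{j,\theta}$ over the $\approx 2^{j\kappa}$ caps $\theta$ of scale $2^{-j}$, and set $\tilde B_{j,\theta}:=2^{j\alpha}B_{j,\theta}$, a uniformly bounded smooth frequency cutoff to $\theta$. It suffices to prove, for every $\epsilon>0$,
\[
\Big\|\sum_\theta\tilde B_{j,\theta}f\Big\|_{L^q(\R^2)}\ \lesssim_\epsilon\ 2^{j\epsilon}\,2^{j\kappa\,E(q)}\,\|f\|_{L^q(\R^2)},\qquad
E(q):=\begin{cases}\tfrac12-\tfrac2q,& 4\le q\le 2m,\\ 1-\tfrac{m+2}q,& 2m\le q\le\infty,\end{cases}
\]
since then $\|B_jf\|_q\lesssim 2^{-j(\alpha-\kappa E(q)-\epsilon)}\|f\|_q$ and the geometric series in $j$ sums precisely when $\alpha>\kappa_\Omega E(q)$, which is $\kappa_\Omega(1/2-2/q)$ on $[4,2m]$ and $\kappa_\Omega(1-(m+2)/q)$ on $[2m,\infty]$. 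Since $E$ is piecewise linear in $1/q$, Riesz--Thorin reduces the whole claim to its values at $q=4,2m,\infty$: at $q=\infty$ it is the triangle inequality over the $2^{j\kappa}$ caps together with the uniform $L^\infty$ bound for each $\tilde B_{j,\theta}$ ($E(\infty)=1$); at $q=4$ ($E(4)=0$) it is the $q=4$ endpoint of Theorem~\ref{thm: seeger--ziesler}, which holds for every convex domain and whose proof already supplies precisely this per-scale bound with arbitrarily small loss. Thus the only new input is the $q=2m$ case, i.e. $\big\|\sum_\theta\tilde B_{j,\theta}f\big\|_{L^{2m}(\R^2)}\lesssim_\epsilon 2^{j\epsilon}2^{j\kappa(\frac12-\frac1m)}\|f\|_{L^{2m}(\R^2)}$.

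To prove this I would pass to the $m$-linear form: writing $g_\theta:=\tilde B_{j,\theta}f$, one has $\|\sum_\theta g_\theta\|_{2m}^{2m}=\|(\sum_\theta g_\theta)^m\|_2^2=\big\|\sum_{\theta_1,\dots,\theta_m}g_{\theta_1}\cdots g_{\theta_m}\big\|_2^2$. The product $g_{\theta_1}\cdots g_{\theta_m}$ has Fourier support in the Minkowski sum $\theta_1+\cdots+\theta_m$, and here the $B_m$ property enters decisively: it forces these Minkowski sums to be essentially disjoint -- two of them can overlap only if the underlying multisets of caps coincide -- so $L^2$-orthogonality collapses the expression to a diagonal sum over unordered $m$-tuples. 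Bounding that diagonal sum by $2^{j\kappa(m-2)+j\epsilon}\|f\|_{2m}^{2m}$ is the delicate point: mere disjointness of the Minkowski sums only yields the weaker exponent $\tfrac12-\tfrac1{2m}$, and to reach $\tfrac12-\tfrac1m$ one must use the combinatorial structure more efficiently -- essentially a geometric, multilinear-Kakeya-flavored estimate on the overlaps of the dual plates $P_\theta$ when the directions form a $B_m$ set. Interpolating the resulting $q=2m$ bound with the free $q=4$ bound below and the trivial $q=\infty$ bound above then gives both ranges, and summing in $j$ completes the argument.

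The principal obstacle is the construction: one needs a single convex curve that is at once convex, of affine dimension exactly the prescribed $\kappa$, and carries a $B_m$ set of cap directions at \emph{every} dyadic scale, which demands a careful self-similar design and is exactly what pins $\kappa$ to the window $(\tfrac1{4m+2},\tfrac1{4m-2}]$. The subsidiary difficulty is extracting the full strength of the $B_m$ hypothesis in the $q=2m$ estimate -- obtaining the sharp exponent $\tfrac12-\tfrac1m$ rather than the one orthogonality alone produces -- and then checking that the overlaps in question are in fact $O(2^{j\epsilon})$, not $O(1)$, at the scale of the caps' angular width; this last loss is harmless precisely because the target range of $\alpha$ is open.
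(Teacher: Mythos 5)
This statement is not proved in the paper at all --- it is quoted from Cladek \cite{cladek_BR} and then used as a black box (via Theorem~\ref{thm: bochner riesz bounds} and Proposition~\ref{prop: BR 2n energy est}) --- so I am measuring your proposal against Cladek's argument and the closely related machinery reproduced in \S2--\S5. Your overall skeleton is the right one: build a convex domain by lifting a $B_m$-type set to the parabola, verify the dimension and an $m$-orthogonality (finite overlap of $m$-fold Minkowski sums of caps, i.e.\ zero or low $m$-additive energy), prove per-scale bounds at $q=4,2m,\infty$, interpolate in $1/q$, and sum the dyadic scales. Two points, however, are genuine gaps rather than routine checking. First, on the construction: your claim that the window $(\frac{1}{4m+2},\frac{1}{4m-2}]$ is ``exactly'' what matching the Bose--Chowla density $N^{1/m}$ forces is backwards. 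A Bose--Chowla-dense set naturally yields dimension $\frac{1}{2m}>\frac{1}{4m-2}$ --- that observation is precisely the improvement of the present paper (Theorem~\ref{thm: main}); Cladek's window comes from the much sparser greedy (Mian--Chowla-type) $B_m$ sets, and in either case one hits a prescribed $\kappa$ by deliberately thinning the set (cf.\ Remark~\ref{rmk: lower dimensions}). More seriously, ``the $B_m$ structure survives at every scale'' is the heart of the geometric stage and is not automatic: the caps covering $\partial\Omega$ at scale $\delta$ are not a single self-similar generation but mix the current-generation intervals with removed intervals from \emph{all} previous generations, of wildly different lengths; one must either arrange exact zero energy (Cladek) or partition the caps into $O(\log\delta^{-1})$ subfamilies and run an induction on scales to control the overlap of $m$-fold sums within each (Claims~\ref{claim: claim 1}--\ref{claim: claim 2} here). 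Your sketch does not engage with this multi-scale bookkeeping.

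Second, the crux analytic step is asserted but not proved. You correctly note that after the $m$-linear expansion, Plancherel and the finite overlap of the sums $\theta_1+\dots+\theta_m$ reduce matters to $\int\big(\sum_\theta|g_\theta|^2\big)^m$, and that the naive finish (vector-valued interpolation or H\"older in the cap index) only yields the exponent $\kappa(\tfrac12-\tfrac1{2m})$, whereas \eqref{eqn: cladek range b} needs $\kappa(\tfrac12-\tfrac1m)$ at $q=2m$ (already at $m=2$ this is the difference between a lossless $L^4$ bound and a $\delta^{-\kappa/4}$ loss). But you then leave the missing ingredient as a gesture toward a ``multilinear-Kakeya-flavored'' overlap estimate for dual plates, which is not the actual mechanism and is not something you could cite off the shelf. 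In Cladek's proof (Proposition~\ref{prop: BR 2n energy est}, following Seeger--Ziesler) the gain comes from a C\'ordoba-style argument that feeds the square function into quantitative kernel estimates for the cap pieces: the caps must first be subdivided into $O(\log\delta^{-1})$ subintervals of comparable length (the $\mathcal{J}(\delta)$ decomposition of \S\ref{sec: BR}), the resulting kernels controlled by \cite[Proposition 3.2]{SZ} (Proposition~\ref{prop: kernel estimate}), and a mixed-norm bound in the spirit of Lemma~\ref{lemma: mixed norm interpolation} exploited; note also that for these rough domains your ``uniformly bounded smooth cutoff to $\theta$'' with uniform $L^1$ kernel bounds is itself not automatic for the long flat caps and requires exactly this machinery. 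As it stands, the $q=2m$ estimate --- the only genuinely new input beyond Seeger--Ziesler, and the step on which both \eqref{eqn: cladek range a} and \eqref{eqn: cladek range b} hinge --- is missing.
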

By interpolating with a better $L^{2m}$ bound, Theorem \ref{thm: cladek} provides quantitative improvements over Theorem \ref{thm: seeger--ziesler} in the range $4<q<\infty$ for certain convex domains. We prove the following.
\begin{theorem}\label{thm: main}
Fix an integer $m\geq 2$ and an $\epsilon>0$. Then for all $\kappa\in(\frac{1}{2m+2},\frac{1}{2m}]$, there exists a convex domain $\Omega$ with $\kappa_\Omega=\kappa$, such that $B^\alpha_\Omega$ is bounded on $L^q(\R^2)$ for
\begin{align}
    \alpha>\kappa_\Omega(1/2-2/q)+\epsilon\qquad&\text{when}\qquad4\leq q\leq 2m,
   \label{eqn: main a} \\
    \alpha>\kappa_\Omega(1/2-5/2q+1/4m)+\epsilon\qquad&\text{when}\qquad 2m\leq q\leq 6m,\label{eqn: main b}\\
    \alpha>\kappa_\Omega\big(1-\frac{3m+1}{q}\big)+\epsilon\qquad &\text{when}\qquad 6m\leq q\leq\infty\label{eqn: main c}.
\end{align}
\end{theorem}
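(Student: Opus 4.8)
The plan is to follow the now-standard framework for Bochner--Riesz operators associated with rough convex domains, reducing matters to estimating a square function over the polygonal approximation of $\partial\Omega$, and then to exploit arithmetic structure in the choice of the domain. First I would recall the reduction — implicit in Seeger--Ziesler \cite{SZ} and made quantitative by Cladek \cite{cladek_BR} — whereby one dyadically decomposes $(1-\rho(\xi))_+^\alpha$ away from the boundary, approximates $\partial\Omega$ at scale $\delta=2^{-j}$ by an inscribed polygon with roughly $\delta^{-\kappa_\Omega}$ edges, and bounds $B^\alpha_\Omega$ on $L^q$ by a sum over $j$ of $\delta^{\alpha}$ times the $L^q(\R^2)$ operator norm of the multiplier adapted to that polygon. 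The latter is controlled, via the usual Córdoba-type argument, by an $\ell^q(L^q)$-valued square function estimate for the family of plank projections $\{P_\theta\}$ associated with the $\delta^{-\kappa_\Omega}$ edges; so the whole theorem rests on obtaining good bounds for $\big\|(\sum_\theta |P_\theta f|^2)^{1/2}\big\|_{L^q}$ with the right power of the cardinality $N\approx\delta^{-\kappa_\Omega}$ of the edge set, for $q$ in the three indicated ranges.

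Next, the heart of the matter is the choice of domain: rather than a generic convex curve of affine dimension $\kappa$, I would build $\Omega$ so that the directions (slopes) of the $N$ edges of the scale-$\delta$ polygon form a highly structured set — specifically, following the abstract's hint, a translate/dilate of a large $B_m$ set (Sidon-type set of order $m$) coming from the Bose--Chowla construction, together with a subset carrying the $\Lambda(p)$ property with large exponent $p$, as furnished by Bourgain's construction of $\Lambda(p)$ sets of maximal size. The $B_m$ structure means that for $q=2m$ the relevant $2m$-fold overlaps of the planks are essentially as disjoint as arithmetic allows, which yields an improved $L^{2m}$ square function bound (this is what replaces Cladek's $L^{2m}$ input and is the source of the gain at the endpoint $q=2m$); the $\Lambda(p)$ structure, used for a larger exponent $p\approx 6m$, gives a second improved estimate at $q=6m$. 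One then has three control points: the trivial/$TT^*$ estimate at $q=4$ giving the exponent $\kappa_\Omega(1/2-2/q)$, the $B_m$-improved estimate at $q=2m$, and the $\Lambda(p)$-improved estimate at $q=6m$ (with the Seeger--Ziesler exponent $\kappa_\Omega(1-4/q)$ — or its strengthening — persisting at $q=\infty$). Interpolating these — and summing the resulting geometric series in $j=-\log_2\delta$, which is exactly where the loss of an arbitrary $\epsilon$ in the exponent of $\alpha$ enters, since the square-function bounds carry harmless $\delta^{-\epsilon}$ factors and a logarithmic loss from the number of dyadic scales — produces precisely the piecewise-linear-in-$1/q$ exponents \eqref{eqn: main a}, \eqref{eqn: main b}, \eqref{eqn: main c}. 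I would also need to verify that the constructed $\Omega$ genuinely has affine dimension exactly $\kappa$ and can be taken for every $\kappa\in(\frac{1}{2m+2},\frac{1}{2m}]$; this is a matter of calibrating the number of edges $N\approx\delta^{-\kappa}$ against the size of the available $B_m$ set, $|B_m|\approx N^{1/m}$ type bounds forcing the range of $\kappa$ to shrink by a factor related to $m$ — which is exactly why the hypothesis on $\kappa$ here is $(\frac{1}{2m+2},\frac{1}{2m}]$ rather than Cladek's $(\frac{1}{4m+2},\frac{1}{4m-2}]$.

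I expect the main obstacle to be the square function estimate at the larger exponent, i.e.\ turning the $\Lambda(p)$ property of the slope set into a genuine $L^p$ bound for $\big\|(\sum_\theta|P_\theta f|^2)^{1/2}\big\|_{L^p}$ with the right constant: $\Lambda(p)$ is a statement about exponential sums / lacunary-type Fourier coefficients, whereas here the planks have genuine two-dimensional spatial extent and varying eccentricities across dyadic scales, so one must transfer the arithmetic $\Lambda(p)$ input through a geometric argument (a localization to a single plank-scale, an application of the one-dimensional $\Lambda(p)$ inequality in the "long" direction, and a Fubini/Rubio de Francia-type assembly) without losing powers of $N$. Secondary difficulties are: ensuring the $B_m$ and $\Lambda(p)$ subsets can be realized \emph{simultaneously} inside the slope set of a single convex curve (they may need to live on disjoint arcs, with the curvature/affine-dimension budget split between them), and handling the interpolation carefully at the corner exponents $q=2m$ and $q=6m$ where two different square-function mechanisms meet — one must check the exponents match up continuously, which the statement of Theorem~\ref{thm: main} asserts they do. Finally, summing over the $O(\log(1/\delta))$ dyadic frequency annuli and over the scales $\delta$ is routine once the per-scale bound has a strictly positive power of $\delta$ to spare, which is guaranteed for $\alpha$ strictly above the stated thresholds — hence the $+\epsilon$.
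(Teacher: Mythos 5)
Your overall framework (dyadic decomposition, reduction to cap/plank estimates at scale $\delta$, improved $L^{2m}$ input from a Bose--Chowla $B_m$ slope set of near-maximal density, and interpolation between $q=4$, $2m$, $6m$, $\infty$) matches the paper's skeleton, and your $q=2m$ mechanism is essentially the one used here (the $B_m^*[g]$ structure propagated through a Cantor-type iteration gives $\mathcal{E}_{m,\Omega}\leq m\epsilon$, and the $L^{2m}$ bound then comes from Proposition \ref{prop: BR 2n energy est}). But your route to the crucial $q=6m$ estimate has a genuine gap. You propose to place a Bourgain $\Lambda(p)$ set with $p\approx 6m$ inside the slope set and transfer that one-dimensional arithmetic information to the $L^{6m}$ bound. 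This cannot work for a domain of dimension $\tfrac{1}{2m}$: for $A\subset[N]$ one always has $\mathrm{card}(A)\lesssim \|A\|_{\Lambda(p)}^2\,N^{2/p}$ (test against constant coefficients), so a slope set dense enough to produce $\kappa_\Omega=\tfrac{1}{2m}$ — i.e.\ of size about $N^{1/m}$ inside $[N]$ — has $\Lambda(6m)$ constant at least of order $N^{1/(3m)}$, a power of the scale, not $O(1)$. Splitting the boundary into disjoint arcs, with a $\Lambda(6m)$ piece and a $B_m$ piece, does not repair this: the $L^{6m}$ estimate is needed for the full collection of caps at scale $\delta$, and on the dense $B_m$ portion you would still only have $\Lambda(2m)$-type information, so the claimed $q=6m$ square function bound — the source of \eqref{eqn: main b} and \eqref{eqn: main c} — is unobtainable by purely one-dimensional arithmetic input. (Bourgain's $\Lambda(p)$ sets do appear in this paper, but only for Theorem \ref{thm: lambda p main}, where the dimension is correspondingly lowered to $\tfrac1p$; they play no role in Theorem \ref{thm: main}.)

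The missing idea is the two-dimensional upgrade: the paper never proves an $L^{6m}$ square function estimate at all. Instead, the $B_m$ structure yields a one-dimensional $\ell^2L^{2m}$ decoupling inequality for the underlying Cantor set (Proposition \ref{prop: 1d decoupling}, via the $\Lambda(2m)$ property and the transference in Proposition \ref{prop: laba wang}), and the theorem of Chang \emph{et al.} (Theorem \ref{thm: chang}) converts this into a two-dimensional $\ell^2L^{6m}$ decoupling estimate for the parabola-lifted Cantor set — it is the curvature of the lift, not any $\Lambda(6m)$ property of the slopes, that produces the exponent $6m$. The $q=3p=6m$ multiplier bound then follows from this decoupling combined with Cauchy--Schwarz/H\"older over the $\approx\delta^{-\kappa_\Omega}$ caps, the Seeger--Ziesler kernel estimates (Proposition \ref{prop: kernel estimate}), and a vector-valued interpolation between $\ell^2L^2$ and $\ell^\infty L^\infty$ (Lemma \ref{lemma: mixed norm interpolation}). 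A secondary point: the construction is a self-similar Cantor iteration precisely so that the $B_m^*[g]$ overlap bounds and the decoupling constants can be propagated to every scale $\delta$ by induction (Claims \ref{claim: claim 1}--\ref{claim: claim 2}, Proposition \ref{prop: 1d decoupling}), which is where the harmless $\epsilon$-losses in \eqref{eqn: main a}--\eqref{eqn: main c} actually originate; a single-scale polygonal description does not by itself supply these uniform-in-$\delta$ estimates.
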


By interpolating with a better $L^{2m}$ as well as $L^{6m}$ bound, Theorem \ref{thm: main} also provides quantitative improvements over Theorem \ref{thm: seeger--ziesler} in the range $4<q<\infty$ for certain convex domains. The domains considered by us are different from the ones considered by Cladek \cite{cladek_BR} and so Theorem \ref{thm: main} is not a direct improvement to the bounds \eqref{eqn: cladek range a}, \eqref{eqn: cladek range b}.\footnote{Although the techniques used to obtain the $L^{6m}$ bound in our example can be used to directly improve the bounds in Cladek's example.} Rather, Theorem \ref{thm: main} shows that for each domain considered by Cladek \cite{cladek_BR} there exists a domain of the same dimension with a wider range of $L^p$-boundedness for the associated Bochner--Riesz operator. This can be seen by replacing $m$ by $2m-1$ in Theorem \ref{thm: main}: see Figure \ref{fig:SZ vs Cladek vs Us}.
\begin{figure}[h]
    \centering
    \makebox[0pt]{\includegraphics[width=17cm]{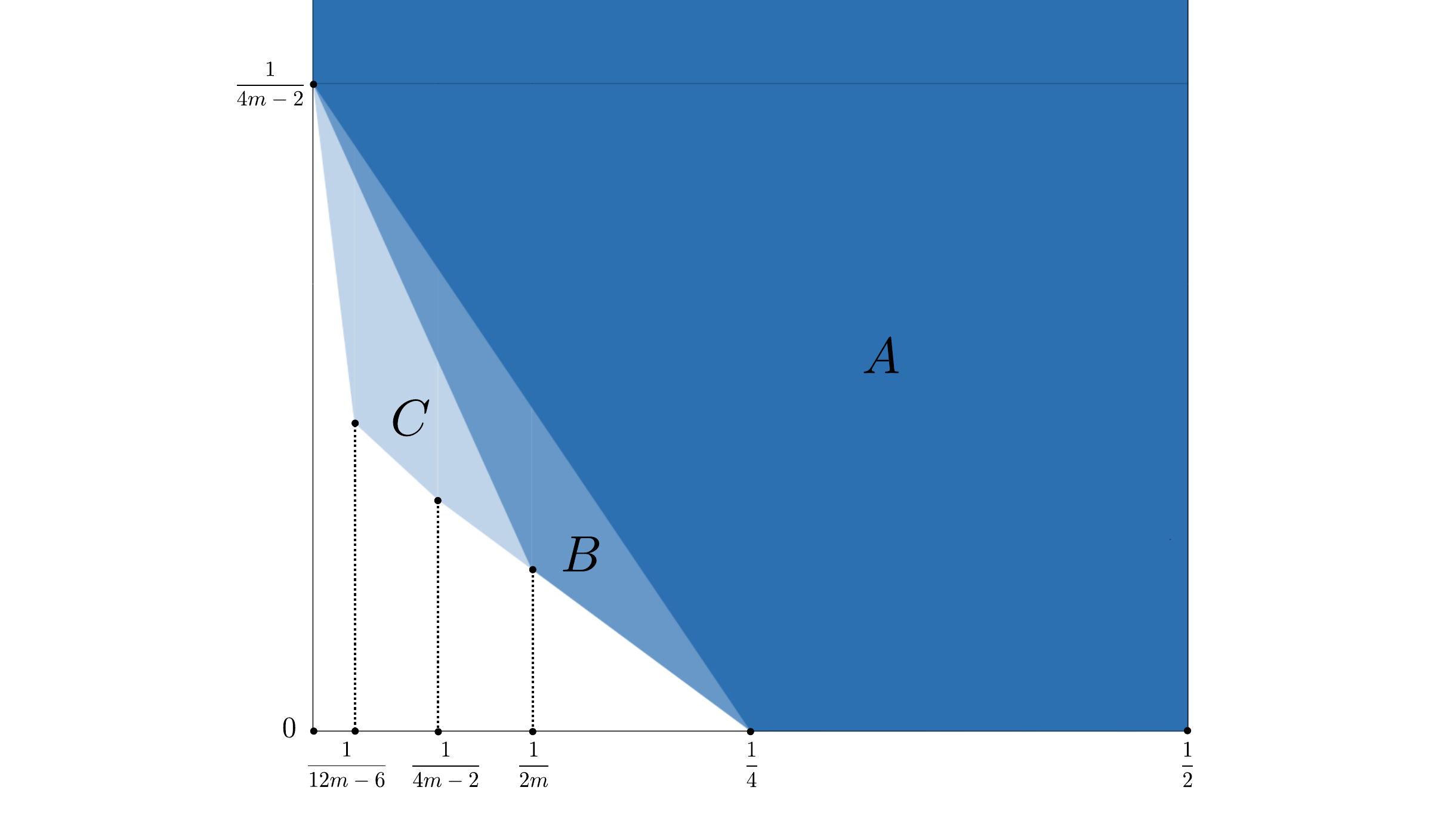}}
    \caption{A comparison of Seeger--Ziesler's \cite{SZ} range (A), Cladek's \cite{cladek_BR} example (B), and our example (C). Here we fix a dimension $\kappa=\frac{1}{4m-2}$ for some integer $m\geq 2$. Theorem \ref{thm: seeger--ziesler} ensures that $B^\alpha_\Omega$ is bounded on $L^q(\R^2)$ whenever $(1/q,\alpha)\in A$ for \textit{any} convex domain $\Omega$ satisfying $\kappa_{\Omega}=\kappa$. Theorem \ref{thm: cladek} shows there is a \textit{specific} convex domain $\Omega$ with $\kappa_\Omega=\kappa$ such that $B^\alpha_\Omega$ is bounded on $L^q(\R^2)$ whenever $(1/q,\alpha)\in A\cup B$. Theorem \ref{thm: main} improves this further by constructing another specific domain $\Omega$ with $\kappa_\Omega=\kappa$ such that $B^\alpha_\Omega$ is bounded on $L^q(\R^2)$ whenever $(1/q,\alpha)\in A\cup B\cup C$.}
    \label{fig:SZ vs Cladek vs Us}
\end{figure}

Theorem \ref{thm: cladek} has a simpler qualitative consequence: there exist domains of every dimension in $(0,\frac{1}{6})$ for which Theorem \ref{thm: seeger--ziesler} is not sharp. Theorem \ref{thm: main} broadens the range of possible dimensions to $(0,\frac{1}{4})$. We prove the following theorem which further improves this.
\begin{theorem}\label{thm: lambda p main}
Fix $p>2$ and an $\epsilon>0$. There exists a convex domain $\Omega$ with $\kappa_{\Omega}=\frac{1}{p}$, such that $B^\alpha_{\Omega}$ is bounded on $L^q(\R^2)$ for
\begin{align}
    \alpha>\kappa_\Omega\bigg(\frac{1}{2}-\frac{1}{3p}\bigg)\frac{(\frac{1}{4}-\frac{1}{q})}{(\frac{1}{4}-\frac{1}{3p})}+\epsilon\qquad&\text{when}\qquad 4\leq q\leq 3p,\label{eqn: lambda p main a}\\
    \alpha>\kappa_\Omega\bigg(1-\frac{3p+2}{2q}\bigg)+\epsilon\qquad &\text{when}\qquad 3p\leq q\leq\infty.\label{eqn: lambda p main b}
\end{align}
\end{theorem}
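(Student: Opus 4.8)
The plan is to realize $\Omega$ inside the same scale-by-scale construction scheme that underlies Theorems~\ref{thm: cladek} and \ref{thm: main}, feeding it a large $\Lambda(p)$ set in place of a Bose--Chowla $B_m$ set. Recall that such a domain is assembled by prescribing, for each dyadic $\delta=2^{-j}$, a convex polygon $P_\delta$ that approximates $\partial\Omega$ to precision $\delta$; the data that matters at scale $\delta$ is a family of roughly $\delta^{-1/2}$ candidate edge directions together with the subset actually realized by $P_\delta$. First I would invoke Bourgain's theorem to select, among these $\approx\delta^{-1/2}$ directions, a subset $S_\delta$ of size $\approx(\delta^{-1/2})^{2/p}=\delta^{-1/p}$ whose slopes form a $\Lambda(p)$ set with constant independent of $\delta$, and take the edges of $P_\delta$ to point in those directions; the nesting and convexity of the resulting $P_\delta$ are exactly what the construction scheme is designed to deliver, and a standard edge count identifies the affine dimension as $\kappa_\Omega=1/p$. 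It is the availability of optimal-size $\Lambda(p)$ sets for \emph{every} real $p>2$---rather than only the $B_m$ sets at integer $m$---that lets this construction reach all dimensions $\kappa_\Omega=1/p$, $p>2$, hence the full range $(0,\tfrac12)$ of dimensions, in particular the range $(\tfrac14,\tfrac12)$ beyond the reach of Theorem~\ref{thm: main}.

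Next I would run the standard Bochner--Riesz reduction. Writing $(1-\rho(\xi))_+^\alpha=\sum_{j\ge0}2^{-j\alpha}m_j(\xi)$ with each $m_j$ a bump adapted to the annulus $\{1-\rho(\xi)\approx 2^{-j}\}$, and splitting $m_j=\sum_k m_{j,k}$ into pieces supported on the plates cut out by the edges of $P_{2^{-j}}$, the triangle inequality in $j$ reduces the whole theorem to a per-scale operator bound $\|S_\delta\|_{L^q\to L^q}\lesssim_\epsilon\delta^{-\beta(q)-\epsilon}$, where $S_\delta:=\sum_k m_{j,k}(D)$ and $\delta=2^{-j}$, valid whenever $\alpha>\beta(q)$. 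By real interpolation it suffices to prove this at four exponents: the trivial bound $\beta(2)=0$ (bounded overlap of the plates); the Córdoba / Seeger--Ziesler bi-orthogonality bound $\beta(4)=0$; the crude bound $\beta(\infty)=\kappa_\Omega$ (summing the $\approx\delta^{-\kappa_\Omega}$ plate multipliers, each $O(1)$ on $L^\infty$); and the key new estimate
\begin{equation*}
\|S_\delta\|_{L^{3p}\to L^{3p}}\ \lesssim_\epsilon\ \delta^{-\kappa_\Omega\left(\frac12-\frac1{3p}\right)-\epsilon}.
\end{equation*}
Interpolating $\beta(4)=0$ against the $L^{3p}$ bound returns exactly the exponent in \eqref{eqn: lambda p main a} on $4\le q\le 3p$, interpolating the $L^{3p}$ bound against $\beta(\infty)=\kappa_\Omega$ returns \eqref{eqn: lambda p main b} on $3p\le q\le\infty$, and the $\epsilon$ absorbs the $\delta^{-\epsilon}$ factors along with the cost of summing in $j$.

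The whole difficulty is thus concentrated in the $L^{3p}$ bound, and this is where the $\Lambda(p)$ structure of $S_\delta$ must be used: for the Seeger--Ziesler extremal domains of the same dimension (where the directions carry no arithmetic structure) no such improvement over interpolating $L^4$ with $L^\infty$ is available, since those domains are known to be sharp for \eqref{eqn:sz range}. Exploiting the self-similar structure of the construction, I would reduce the scale-$\delta$ estimate to controlling operators built from plates whose directions are exactly the $\Lambda(p)$ set $S_\delta$; after a parabolic rescaling that normalizes the plates and a periodization in the tangential variable, such an operator is modeled by an exponential sum $\sum_{\nu\in S_\delta}e^{i\nu\theta}F_\nu$, and Bourgain's $\Lambda(p)$ inequality in its square-function form, $\bigl\|\sum_{\nu\in S_\delta}e^{i\nu\theta}F_\nu\bigr\|_{L^p(\mathbb{T})}\lesssim\bigl\|(\sum_{\nu}|F_\nu|^2)^{1/2}\bigr\|_{L^p(\mathbb{T})}$, controls the tangential behavior in $L^p$. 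Combining this $L^p$-gain with $L^2$ orthogonality and the Córdoba $L^4$ bi-orthogonality governing the transverse behavior, and inserting $|S_\delta|\approx\delta^{-1/p}$ to convert the remaining square function into a power of $\delta$, should produce the stated bound; $3p$ is precisely the exponent at which the $\Lambda(p)$-gain in one variable balances the available $L^4$ information in the other. The hard part will be making this transference honest: passing from Bourgain's inequality on $\mathbb{T}$ to plates of positive thickness in $\R^2$ with constants that do not degrade in $\delta$, and simultaneously handling the various edge-length classes that coexist at a single frequency scale. Granted the $L^{3p}$ bound, the remainder---the dyadic decomposition, the interpolation, and the verification that the $P_\delta$ assemble into a convex domain---follows the templates of Theorems~\ref{thm: cladek} and \ref{thm: main}.
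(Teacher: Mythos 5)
Your high-level architecture matches the paper's (dyadic reduction, endpoint bounds at $q=4$, $q=3p$, $q=\infty$ with $\nu_4=0$, $\nu_{3p}=\kappa_\Omega(1/2-1/3p)$, $\nu_\infty=\kappa_\Omega$, and interpolation producing exactly \eqref{eqn: lambda p main a}--\eqref{eqn: lambda p main b}), but the two places where the actual work lies are not carried out correctly. First, the construction: you propose choosing, independently at each scale $\delta$, a fresh Bourgain $\Lambda(p)$ set $S_\delta$ of optimal size among $\approx\delta^{-1/2}$ candidate directions, and assert that ``nesting and convexity are exactly what the construction scheme is designed to deliver.'' This is the crux, not a formality: the scale-$\delta$ cap structure of a single convex domain is determined by the domain, so the selections at different scales must refine each other coherently, and Bourgain's sets carry no self-similar or nested structure that would allow this. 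The paper resolves it by fixing \emph{one} $\Lambda(p)$ set of bounded size $N$ (Corollary \ref{cor: lambda p}, Lemma \ref{lemma: *}), iterating it as a generalized Cantor set $C^p$ on $[-1/2,1/2]$, and taking the convex hull of its lift to the parabola; coherence across scales is then automatic, the dimension computation $\kappa_{\Omega_p}=1/p$ is done by the cap-counting argument of \S\ref{sec: dimension and additive energy}, and the price is a $d_p^{k}\le\delta^{-\epsilon}$ loss from induction on scales (Proposition \ref{prop: 1d decoupling}), absorbed by taking $N$ large -- which is why the theorem carries an $\epsilon$ at all.

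Second, the key $L^{3p}$ estimate. Your plan is to combine a ``square-function form'' of Bourgain's inequality, $\|\sum_{\nu}e^{i\nu\theta}F_\nu\|_{L^p}\lesssim\|(\sum_\nu|F_\nu|^2)^{1/2}\|_{L^p}$, with $L^2$ orthogonality and C\'ordoba $L^4$ biorthogonality in the transverse variable, asserting that $3p$ is where the gains ``balance.'' Two problems: (a) Bourgain's theorem is a scalar exponential-sum inequality; for functions with Fourier support in plates the honest transference (Proposition \ref{prop: laba wang}, from \cite{laba--wang}) yields only a weighted $\ell^2(L^p(w_Q))$ decoupling inequality (Lemma \ref{lemma: *}(iii)), not the pointwise square-function bound you invoke; (b) even granting one-dimensional $\ell^2L^p$ decoupling in the tangential variable, no simple balancing of that against $L^4$ transverse information produces a two-dimensional $L^{3p}$ bound -- the $q\mapsto 3q$ upgrade is exactly the content of the Cantor-set decoupling theorem of Chang \emph{et al.} (Theorem \ref{thm: chang}, \cite{chang}), a Bourgain--Demeter-type multi-scale induction with parabolic rescaling, which is what the paper invokes (Proposition \ref{prop: 2d decoupling}). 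Your proposal in effect assumes this theorem without proving or citing it. The remaining steps you defer (handling the removed intervals via the partition $\I(\delta)$ and Corollary \ref{cor: br decoupling}, the Seeger--Ziesler kernel estimates of Proposition \ref{prop: kernel estimate}, and the $\ell^qL^q$ interpolation of Lemma \ref{lemma: mixed norm interpolation}) are comparatively routine, but as written the proposal has genuine gaps at both the construction and the central $L^{3p}$ estimate.
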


We remark that, by duality, all of the above theorems have counterpart results in the range $1 \leq q \leq 2$.

Theorem \ref{thm: lambda p main} shows there exist domains of every dimension in $(0,\frac{1}{2})$ for which Theorem \ref{thm: seeger--ziesler} is not sharp. In terms of the quantitative estimates, Theorem \ref{thm: lambda p main} is strictly weaker than Theorem \ref{thm: main} when $p\in 2\mathbb{N}$. For $p\notin 2\mathbb{N}$, Theorem \ref{thm: lambda p main} is stronger for large $q$ and weaker for small $q$.
\pagebreak
\subsection{Ideas in the proof}
There are three distinct ideas at play. 
\begin{itemize}[leftmargin=*]
\setlength\itemsep{1em}
    \item \textbf{Improved constructions:} Seeger--Ziesler \cite{SZ} show that every convex domain in $\R^2$ satisfies a `biorthogonality' property leading to a \textit{universal} $L^4$ square function estimate. This is similar to the classical C\'ordoba--Fefferman square function argument (\textit{cf.} \cite{Cordoba,fefferman_biorthogonality}). The $L^4$ square function estimate of Seeger--Ziesler leads to a \textit{universal} $L^4$ bound for $B^\alpha_\Omega$ in Theorem \ref{thm: seeger--ziesler}. For all integer $m\geq 2$, Cladek \cite{cladek_BR} shows that certain $\frac{1}{4m-2}$-dimensional domains exhibit a stronger `$m$-orthogonality' property which leads to an $L^{2m}$ square function estimate, and consequently the $L^{2m}$ bound in Theorem \ref{thm: cladek}. She provides an explicit example by constructing a \textit{generalized Sidon set} or \textit{$B_m$ set}\footnote{We recall the definition in \S\ref{sec: background}.} on the real line, and lifting it to the parabola to construct a domain $\Omega_{2m}$. $B_m$ sets are discrete sets on the real line that satisfy a one-dimensional $m$-orthogonality property. As a result, $\Omega_{2m}$ will satisfy a two-dimensional $m$-orthogonality property. The $B_m$ set constructed by Cladek is similar to Mian--Chowla's \cite{mian--chowla} construction of $B_2$ sets using a greedy algorithm. Such constructions are, however, sub-optimal in the sense that they yield $B_m$ sets that are sparse. Exploiting the optimal construction of Bose--Chowla \cite{bc}, we obtain $B_m$ sets that are more dense, which allows us to improve the dimensions of such convex domain $\Omega_{2m}$ from $\frac{1}{4m-2}$ to $\frac{1}{2m}$. Thus, by improving the construction in \cite{cladek_BR}, we are able to raise the dimension of the domain appearing in Theorem \ref{thm: cladek}. This idea is used in Theorem \ref{thm: main}.

    \item \textbf{Two-dimensional approach:} In \cite{cladek_BR} the author comments that her argument exploits orthogonality in only one dimension, and that a two-dimensional approach could yield better results. We achieve this by considering decoupling estimates for our domain. The one-dimensional $m$-orthogonality yields a one-dimensional $\ell^2 L^{2m}$ decoupling estimate for the underlying subset of the real line. Using a result of Chang \textit{et al.} \cite{chang}, this leads to a two-dimensional $\ell^2 L^{6m}$ decoupling estimate for our convex domain. If we think of decoupling estimate as a weaker version of orthogonality, then the result of Chang \textit{et al.} \cite{chang} says that a one-dimensional (weak) $m$-orthogonality implies a two-dimensional (weak) $3m$-orthogonality. Compare this to the argument above, where we used a one-dimensional $m$-orthogonality to obtain a two-dimensional $m$-orthogonality. By using the $\ell^2 L^{6m}$ decoupling estimate in conjunction with the $L^{2m}$ square function estimate mentioned above, we are able to get improvement over the $L^p$ bounds in Theorem \ref{thm: cladek}. We see this improvement in the statement of Theorem \ref{thm: main}.
    
    \item \textbf{$\Lambda(p)$ sets:} Decoupling estimates allow us to get improvements over the universal bounds of Theorem \ref{thm: seeger--ziesler} in the absence of a strong square function estimate. For instance, $\lamp$ sets are discrete sets on the real line that satisfy a version of orthogonality that is weaker than $m$-orthogonality. Domains constructed using $\lamp$ sets satisfy strong decoupling estimates without necessarily satisfying a strong square function estimate. By working with $\Lambda(p)$ sets, we are able to produce a wider class of domains for which Theorem \ref{thm: seeger--ziesler} is not sharp. This idea is used in Theorem \ref{thm: lambda p main}.
\end{itemize}
    \subsection{Notation}\label{sec: notation}
    For $n\in\mathbb{N}$ we denote $[n]:=\{1,2,\dots,n\}$. Unless the base is specified, all logarithms are taken base $2$. For $X,Y\in\mathbb{R}$, if there exists an absolute constant $C>0$ such that $X\leq CY$, we shall write $X\lesssim Y$ or equivalently $Y\gtrsim X$. If this constant $C$ is not absolute, but depends on some parameters of our problem, say $\alpha,\beta,\gamma$, we shall specify this by writing $X\lesssim_{\alpha,\beta,\gamma}Y$ or equivalently $Y\gtrsim_{\alpha,\beta,\gamma} X$. We also write $X\approx Y$ when both $X\lesssim Y$ and $X\gtrsim Y$ hold. Similarly, we write $X\approx_{\alpha,\beta\gamma}Y$ when both $X\lesssim_{\alpha,\beta,\gamma} Y$ and $X\gtrsim_{\alpha,\beta,\gamma} Y$ hold. For a compact interval $I$, we denote its centre by $c_I$, the left endpoint by $l_I$, and the right endpoint by $r_I$; so $I=[l_I,r_I]$ and $c_I:=(l_I+r_I)/2$. For a finite set $A$, we denote its cardinality by $\mathrm{card}(A)$.
    
    \subsection*{Acknowledgements} I would like to thank my advisor, Jonathan Hickman, for his guidance and valuable insights while working on this project.  
\section{Background}\label{sec: background}
In this section, we present some background on convex domains, generalized Sidon sets, $\Lambda(p)$ sets, and decoupling estimates.
\subsection{Affine dimension of convex domains}
Let $\Omega$ be a convex domain as defined in \S\ref{sec: main result}. We recall the definition of $\kappa_\Omega$ from \cite{SZ}. For $\delta>0$, we define a $\delta$-cap to be a collection of points on the boundary $\partial\Omega$ of the form $$B(\ell,\delta):=\{P\in\partial\Omega: \text{dist}(P,\ell)<\delta\},$$ where $\ell$ is any supporting line for $\partial\Omega$ (see Figure \ref{caps}). 
\begin{figure}[ht]
    \centering
    \includegraphics[width=9cm]{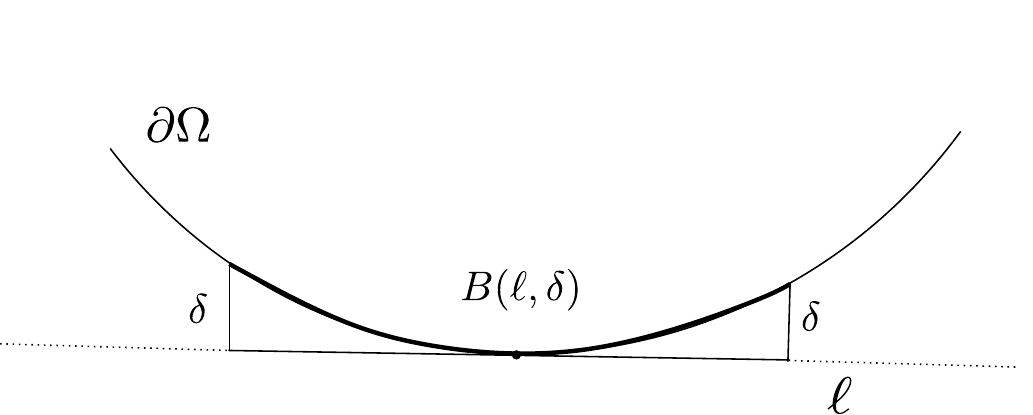}
    \caption{A $\delta$-cap of a domain $\Omega$ (drawn in bold).}
    \label{caps}
\end{figure}
Let $N(\Omega,\delta)$ be the minimum number of $\delta$-caps required to cover $\partial\Omega$. Then $\kappa_\Omega$ is defined as $$\kappa_\Omega:=\limsup_{\delta\to 0^+}\frac{\log{N(\Omega,\delta)}}{\log{\delta^{-1}}}.$$ 
Thus, for all $\epsilon>0$ and $0<\delta<1/2$, there exists a constant $c_\epsilon>0$ such that
 $$N(\Omega,\delta)\leq c_{\epsilon}\delta^{-\kappa_\Omega-\epsilon}.$$   For brevity, we shall often refer to $\kappa_\Omega$ as simply the \textit{dimension} of $\Omega$. 
From \cite[Lemma 2.3]{SZ} we see that $\kappa_\Omega\in[0,\frac{1}{2}]$. The dimension of any convex polygonal domain is $0$, and the dimension of any convex $C^2$ domain is $1/2$. The domains we construct will have dimensions strictly between $0$ and $1/2$.
\subsection{Additive energy of convex domains} Let $\Omega$ be a convex domain as above. We recall the definition of the \textit{$m$-additive energy} of $\Omega$ from \cite{cladek_BR}.  Let $\mathfrak{B}_\delta$ be a collection of $N(\Omega,\delta)$ caps at scale $\delta$ covering $\partial\Omega$. Consider a partition of $\mathfrak{B}_\delta$ as 
\begin{equation}
    \label{eqn: partition of caps}\mathfrak{B}_\delta=\bigsqcup_{i=1}^{M_0}\mathfrak{B}_{\delta,i},
\end{equation}
with the following property 

\begin{equation}
    \label{eqn: finite overlap property}
    \forall i,\text{ each $\xi\in\R^2$ lies in at most $M_1$ sets in the class }\{B_{1,i}+\dots+B_{m,i}:B_{j,i}\in\mathfrak{B}_{\delta,i}\}. 
\end{equation}
Let $$\Xi_m(\Omega,\delta):=\min\{M_0^{2m}\cdot M_1:\text{there is a partition }\eqref{eqn: partition of caps}\text{ of $\mathfrak{B}_\delta$ satisfying }\eqref{eqn: finite overlap property}\}.$$
Define the $m$-additive energy of $\Omega$ to be $$\mathcal{E}_{m,\Omega}:=\limsup_{\delta\to 0^+}\frac{\log\Xi_m(\Omega,\delta)}{\log{\delta^{-1}}}.$$
Thus, for all $\epsilon>0$ and $0<\delta<1/2$, there exists a constant $d_\epsilon>0$ such that $$\Xi_m(\Omega,\delta)\leq d_\epsilon\delta^{-\mathcal{E}_{m,\Omega}-\epsilon}.$$
We state the following result on the boundedness of $B^\alpha_\Omega$ in terms of the dimension and additive energy of $\Omega$.
\begin{theorem}[{\cite[Theorem 1.4]{cladek_BR}}]\label{thm: bochner riesz bounds}
    Let $\Omega$ be a convex domain and $m\geq 2$ be an integer. Then for $2m\leq p\leq\infty$, the Bochner--Riesz operator $B^\alpha_\Omega$ is bounded on $L^p(\R^2)$ for all $\alpha>\mathcal{E}_{m,\Omega}/p+\kappa_\Omega(1-(m+2)/p)$. 
\end{theorem}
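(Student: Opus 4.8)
The plan is to reduce to the endpoint $p=2m$ and interpolate it with the universal $L^\infty$ bound. By Stein's analytic interpolation theorem applied to the analytic family $\{B^\alpha_\Omega\}$, with admissible polynomial growth in $\mathrm{Im}\,\alpha$ along the relevant vertical lines, it suffices to establish two estimates: first, $\|B^\alpha_\Omega\|_{L^{2m}(\R^2)\to L^{2m}(\R^2)}<\infty$ whenever $\alpha>\tfrac{1}{2m}\big(\mathcal{E}_{m,\Omega}+(m-2)\kappa_\Omega\big)$; and second, $\|B^\alpha_\Omega\|_{L^\infty(\R^2)\to L^\infty(\R^2)}<\infty$ whenever $\alpha>\kappa_\Omega$, which is exactly the endpoint $q=\infty$ of Theorem~\ref{thm: seeger--ziesler}. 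With $1/p=\theta/(2m)$, i.e. $\theta=2m/p$ (so $\theta\in[0,1]$ corresponds to $2m\le p\le\infty$), interpolating the two thresholds produces precisely $\mathcal{E}_{m,\Omega}/p+\kappa_\Omega(1-(m+2)/p)$, which is the claimed range.

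For the $L^{2m}$ estimate, decompose $(1-\rho(\xi))^\alpha_+=\sum_{j\ge 0}m_j(\xi)$ dyadically in the distance to $\partial\Omega$, so that $m_j$ is a smooth bump of height $\approx 2^{-j\alpha}$ supported where $1-\rho(\xi)\approx 2^{-j}$; set $\delta=2^{-j}$ and let $B_j$ be the operator with multiplier $m_j$. Cover the corresponding frequency annulus by planks, one for each of the $N(\Omega,\delta)\lesssim_\epsilon\delta^{-\kappa_\Omega-\epsilon}$ caps $\tau\in\mathfrak{B}_\delta$, write $B_j=\sum_\tau B_j^\tau$, and let $P_\tau$ denote the frequency projection to the plank of $\tau$. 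Since $\|B^\alpha_\Omega f\|_{2m}\le\sum_{j\ge 0}\|B_jf\|_{2m}$, it is enough to prove
\[
\|B_jf\|_{L^{2m}(\R^2)}\lesssim_\epsilon 2^{-j(\alpha-\beta-\epsilon)}\|f\|_{L^{2m}(\R^2)},\qquad\beta:=\tfrac{1}{2m}\big(\mathcal{E}_{m,\Omega}+(m-2)\kappa_\Omega\big),
\]
since then the geometric series in $j$ converges for $\alpha>\beta$, the slack in the $\limsup$ defining $\mathcal{E}_{m,\Omega}$ and $\kappa_\Omega$ together with the auxiliary $\epsilon$ being absorbed into the strict inequality, and the constants are polynomial in $\mathrm{Im}\,\alpha$ as Stein interpolation requires.

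The engine is an $L^{2m}$ square function estimate driven by the $m$-orthogonality encoded in $\mathcal{E}_{m,\Omega}$. Fix a partition $\mathfrak{B}_\delta=\bigsqcup_{i=1}^{M_0}\mathfrak{B}_{\delta,i}$ with the overlap property \eqref{eqn: finite overlap property} and $M_0^{2m}M_1=\Xi_m(\Omega,\delta)$. For $g_i:=\sum_{\tau\in\mathfrak{B}_{\delta,i}}B_j^\tau f$, expanding the $m$-th power $g_i^m=\sum_{\tau_1,\dots,\tau_m}\prod_k B_j^{\tau_k}f$ and using that the $(\tau_1,\dots,\tau_m)$-summand has Fourier support in $\tau_1+\dots+\tau_m$, the overlap hypothesis and Plancherel give $\|g_i^m\|_2^2\lesssim M_1\sum_{\tau_1,\dots,\tau_m}\|\prod_kB_j^{\tau_k}f\|_2^2=M_1\big\|(\sum_{\tau\in\mathfrak{B}_{\delta,i}}|B_j^\tau f|^2)^{1/2}\big\|_{2m}^{2m}$; summing over $i$ via H\"older and $\sum_i a_i^m\le(\sum_i a_i)^m$ produces $\|B_jf\|_{2m}\lesssim\Xi_m(\Omega,\delta)^{1/(2m)}\big\|(\sum_\tau|B_j^\tau f|^2)^{1/2}\big\|_{2m}$, hence $\lesssim_\epsilon\delta^{-\mathcal{E}_{m,\Omega}/(2m)-\epsilon}\big\|(\sum_\tau|B_j^\tau f|^2)^{1/2}\big\|_{2m}$ by the defining bound for $\mathcal{E}_{m,\Omega}$. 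It then remains to bound the square function by $\lesssim_\epsilon 2^{-j\alpha}\delta^{-(m-2)\kappa_\Omega/(2m)-\epsilon}\|f\|_{2m}$; here one localizes to balls $Q$ of radius $\delta^{-1}$ on which each $B_j^\tau f$ is essentially constant along the dual planks, reduces to the restricted-type estimate $f=\chi_E$, and interpolates the trivial bound $\|(\sum_\tau|B_j^\tau f|^2)^{1/2}\|_2\lesssim 2^{-j\alpha}\|f\|_2$ (bounded overlap of the planks, $\|m_j\|_\infty\approx 2^{-j\alpha}$) against $L^\infty$-type information, summing the arising $\ell^2$-sum in $\tau$ against $\sum_\tau\|P_\tau\chi_E\|_2^2\lesssim|E|$.

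The step I expect to be the main obstacle is extracting the sharp exponent $(m-2)\kappa_\Omega/(2m)$ from the square function. Interpolating the $L^2$ bound above with the crude bound $\|(\sum_\tau|B_j^\tau f|^2)^{1/2}\|_\infty\lesssim N(\Omega,\delta)^{1/2}2^{-j\alpha}\|f\|_\infty$ only yields loss $N(\Omega,\delta)^{(m-1)/(2m)}$, a factor $\delta^{-\kappa_\Omega/(2m)}$ too large; recovering the missing power forces the localized C\'ordoba-type analysis on $\delta^{-1}$-balls together with careful finite-overlap bookkeeping for the wave packets of the $B_j^\tau f$. This is what makes the estimate sharp: for $m=2$ and $\Omega$ the disc it reduces to the classical C\'ordoba $L^4$ square function bound, and since $\mathcal{E}_{2,\Omega}=0$ there, it recovers the universal $L^4$ estimate $\alpha>0$ of Theorem~\ref{thm: seeger--ziesler}. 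Once the sharp $L^{2m}$ bound is secured, the geometric summation in $j$, the verification of admissible growth, and the final interpolation across $2m\le p\le\infty$ are routine.
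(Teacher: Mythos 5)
First, a point of orientation: the paper does not prove this statement at all --- it is quoted from Cladek, and the paper actually works with the stronger dyadic-piece estimate recorded as Proposition \ref{prop: BR 2n energy est} (Cladek's Proposition 4.1), which is exactly the $L^{2m}$ endpoint your argument needs: a bound of the form $\delta^{\alpha-\mathcal{E}_{m,\Omega}/2m-\kappa_\Omega(1/2-1/m)-\eta}$ for each dyadic piece. Your outer architecture is the intended one and is correct as bookkeeping: the dyadic decomposition in $1-\rho$, the energy-driven square-function reduction (expanding $g_i^m$, using Plancherel and the overlap hypothesis \eqref{eqn: finite overlap property} to pick up $M_1^{1/2}$, then summing over $i$ to get the factor $\Xi_m(\Omega,\delta)^{1/2m}$), and the interpolation of the two thresholds $\tfrac{1}{2m}\big(\mathcal{E}_{m,\Omega}+(m-2)\kappa_\Omega\big)$ and $\kappa_\Omega$ with $\theta=2m/p$ do reproduce $\mathcal{E}_{m,\Omega}/p+\kappa_\Omega(1-(m+2)/p)$. (One small technicality to register: the products $\prod_k B_j^{\tau_k}f$ have Fourier support in sums of $\delta$-thickened caps, not of the caps themselves, so the overlap hypothesis must be transferred to the thickened sumsets; this costs only constants but should be said.)

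The genuine gap is the square-function estimate itself, i.e. the claim
$\big\|\big(\sum_{\tau}|B_j^\tau f|^2\big)^{1/2}\big\|_{L^{2m}}\lesssim_\epsilon 2^{-j\alpha}\,\delta^{-\kappa_\Omega(1/2-1/m)-\epsilon}\|f\|_{L^{2m}}$, which carries the entire exponent $\kappa_\Omega(m-2)/2m$ and is the heart of the theorem. You correctly observe that interpolating the $L^2$ bound with the trivial $\ell^2$-in-$\tau$ $L^\infty$ bound gives only $N(\Omega,\delta)^{(m-1)/2m}$, a factor $\delta^{-\kappa_\Omega/2m}$ too large, but the replacement you sketch (localization to $\delta^{-1}$-balls, restricted-type $f=\chi_E$, and $\sum_\tau\|P_\tau\chi_E\|_2^2\lesssim|E|$) is not an argument: the input $\sum_\tau\|P_\tau\chi_E\|_2^2\lesssim|E|$ is again only $L^2$/bounded-overlap information, so as written it cannot beat the $L^2$--$L^\infty$ interpolation you already ruled out. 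The missing ingredient is to replace the lower endpoint by the \emph{universal $L^4$ square-function estimate} for arbitrary convex domains (the C\'ordoba--Fefferman/Seeger--Ziesler bound, proved via an $L^2$ estimate with logarithmic loss for the Nikodym-type maximal operator associated with the $\approx\delta^{-\kappa_\Omega}$ dual planks): with $\big\|\big(\sum_\tau|B_j^\tau f|^2\big)^{1/2}\big\|_{L^4}\lesssim 2^{-j\alpha}(\log\delta^{-1})^{O(1)}\|f\|_{L^4}$ in hand, vector-valued interpolation against the $L^\infty(\ell^2)$ bound with loss $N(\Omega,\delta)^{1/2}$ gives at $L^{2m}$ exactly the loss $N(\Omega,\delta)^{(m-2)/2m}\approx\delta^{-\kappa_\Omega(1/2-1/m)}$, which is how the cited result is actually obtained. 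As it stands, your proposal proves the theorem modulo precisely this $L^4$ square-function input (equivalently, modulo Proposition \ref{prop: BR 2n energy est}), which you have identified but not supplied.
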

It was shown by Seeger--Ziesler \cite[Lemma 2.4]{SZ} that $\mathcal{E}_{2,\Omega}=0$ for all convex domains $\Omega$, using a generalization of C\'ordoba's \cite{Cordoba} biorthogonality argument for the parabola. This recovers Theorem \ref{thm: seeger--ziesler}, when combined with Theorem \ref{thm: bochner riesz bounds}.
However, there are domains $\Omega$ for which $\mathcal{E}_{m,\Omega}> 0$ for $m>2$, which provide sharp examples for Theorem \ref{thm: seeger--ziesler}.  The reader may find these examples in \cite[\S4]{SZ} or \cite[\S1]{cladek_BR}. The examples constructed by Cladek satisfy $\mathcal{E}_{m,\Omega}=0$ for some $m>2$ \cite[Lemma 3.2]{cladek_BR}. Using Theorem \ref{thm: bochner riesz bounds}, this leads to the improved bounds in Theorem \ref{thm: cladek}. For Theorem \ref{thm: main} we construct domains that satisfy $\mathcal{E}_{m,\Omega}\leq m\epsilon$ for some small $\epsilon>0$. Using Theorem \ref{thm: bochner riesz bounds}, this will provide the estimate \eqref{eqn: main a} in Theorem \ref{thm: main}. We work with domains of low additive energy which are more amenable to induction on scale arguments, as opposed to domains of zero additive energy constructed by Cladek. As a trade off, we lose an $\epsilon$ in our estimates, which is reflected in our estimates \eqref{eqn: main a}, \eqref{eqn: main b} and \eqref{eqn: main c}.  
\subsection{Generalized Sidon sets}\label{sec: sidon}
Here we discuss the notion of \textit{generalized Sidon sets}, from additive combinatorics. 
\begin{definition}
    Fix $m\geq 2$. A set $A\subset\mathbb{N}_0$ is said to be $B_m^*[g]$, if $$\mathrm{card}\{(a_1,\dots,a_m)\in A^m:a_1+\dots+a_m=n\}\leq g\quad\text{for all}\quad n\in\mathbb{N}_0.$$
\end{definition}
This is related to the notion of $B_m[g]$ sets which are defined as follows. A set $A\subset\mathbb{N}_0$ is said to be a $B_m[g]$ set, if $$\mathrm{card}\{(a_1,\dots,a_m)\in A^m:a_1+\dots+a_m=n,a_1\leq\dots\leq a_m\}\leq g\quad\text{for all}\quad n\in\mathbb{N}_0.$$ 
These sets were introduced in 1932 by Sidon \cite{sidon} for the special case $m=2$ and $g=1$. He was interested in studying functions on the torus whose frequencies lie in these sets.  If $g=1$, then a $B_m[g]$ set is called a $B_m$ set. The sets $B_2$ are now called \textit{Sidon sets}, and $B_m[g]$ sets are often referred to as \textit{generalized Sidon sets}. Clearly every $B_m[g]$ set is a $B_m^*[g\cdot m!]$ set, but the converse may not hold. This is because the solutions to the equation $a_1+\dots+a_m=n$ can be permuted fewer than $m!$ times when the integers are not distinct. 

\begin{definition}
    For $m\geq 2$, and $N\geq 1$, let $F_{m,g}(N)$ denote the size of the largest $B_m^*[g]$ subset of $[N]$.
\end{definition}
A simple counting argument shows that 
\begin{equation}\label{eqn: F upper bound}F_{m,g}(N)\leq m^{1/m}(gN)^{1/m}.\end{equation}
To see this, let $A\subset[N]$ be a $B_m^*[g]$ set. Now $$|A|^m=|A^m|=\sum_{n=1}^{mN}|\{(a_1,\dots,a_m)\in A^m:a_1+\dots+a_m=n\}|.$$ By definition, $|\{(a_1,\dots,a_m)\in A^m:a_1+\dots+a_m=n\}|\leq g$ for all $n\in\mathbb{N}_0$. From this it follows that $$|A|^m\leq mNg,$$ which proves \eqref{eqn: F upper bound}.

The following well-known result provides a lower bound for $F_{m,g}(N)$.
\begin{theorem}[\textit{cf.} \cite{Lindstrom}]\label{theorem: bmg lower bound} For all $m\geq 2$, there exists a constant $c_m>0$ such that 
\begin{equation}\label{eqn: F lower bound} F_{m,g}(N)\geq c_m(gN)^{1/m},\end{equation}
for all $N,g$ sufficiently large. 
\end{theorem}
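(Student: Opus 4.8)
The plan is to prove \eqref{eqn: F lower bound} by combining the optimal finite-field construction of Bose--Chowla --- which yields a dense $B_m$ set, i.e.\ the case $g=1$ --- with an elementary ``dilation and thickening'' device that upgrades a $B_m$ set to a dense $B_m^*[g]$ set. Throughout, implied constants may depend on $m$; since the assertion concerns $N,g$ sufficiently large, I may discard finitely many small values of each, and I may also assume $g\lesssim_m N^{m-1}$ (otherwise $(gN)^{1/m}$ already exceeds the trivial bound $F_{m,g}(N)\le N$, so there is nothing to prove for a suitable $c_m$).

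\textit{Step 1 (base construction).} For a prime $q$, fix a generator $\theta$ of the cyclic group $\mathbb{F}_{q^m}^{\times}$; then $\mathbb{F}_{q^m}=\mathbb{F}_q(\theta)$, so $\theta$ has degree $m$ over $\mathbb{F}_q$, and $\theta+b\neq 0$ for every $b\in\mathbb{F}_q$. Let $c_b\in\{0,\dots,q^m-2\}$ be the discrete logarithm of $\theta+b$ to base $\theta$, and set $A_0:=\{c_b:b\in\mathbb{F}_q\}$, so $\card{A_0}=q$. If $\sum_{i=1}^m c_{b_i}\equiv\sum_{i=1}^m c_{b_i'}\pmod{q^m-1}$, then $\prod_{i=1}^m(\theta+b_i)=\prod_{i=1}^m(\theta+b_i')$ in $\mathbb{F}_{q^m}$; the two sides are monic degree-$m$ polynomials in $\theta$ that agree, so their difference has degree $<m$ and vanishes at $\theta$, hence is the zero polynomial, forcing $\{b_i\}=\{b_i'\}$ as multisets. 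Thus $A_0$ is a $B_m$ set modulo $q^m-1$, in particular an ordinary $B_m$ set, hence $B_m^*[m!]$. Translating $A_0$ into $[q^m]$ and using Bertrand's postulate to choose a prime $q$ with $M^{1/m}/2<q\le M^{1/m}$, we obtain for every large $M$ a $B_m^*[m!]$ set $A\subseteq[M]$ with $\card{A}\ge\tfrac12 M^{1/m}$.

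\textit{Step 2 (dilation, thickening, and the count).} Given $N,g$ in the admissible range, put $h:=\big\lfloor(g/(m\cdot m!))^{1/(m-1)}\big\rfloor$, choose $M\asymp_m N/h$ with $h(M+1)\le N$, and take $A\subseteq[M]$ from Step 1. Define
\[
B:=\{\,ha+j:\ a\in A,\ 0\le j<h\,\}\subseteq[N],\qquad \card{B}=h\cdot\card{A}.
\]
To bound representations, fix $n$ and suppose $\sum_{i=1}^m(ha_i+j_i)=n$ with $a_i\in A$, $0\le j_i<h$. With $S:=\sum_i a_i$ and $T:=\sum_i j_i$ we have $n=hS+T$ and $0\le T<mh$, so $S$ takes at most $m$ values; for each such $S$ there are at most $m!$ ordered tuples $(a_1,\dots,a_m)\in A^m$ with $\sum_i a_i=S$, since $A$ is $B_m^*[m!]$; and for each such tuple there are at most $h^{m-1}$ tuples $(j_1,\dots,j_m)\in\{0,\dots,h-1\}^m$ with $\sum_i j_i=n-hS$. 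Hence $n$ has at most $m\cdot m!\cdot h^{m-1}\le g$ representations as an ordered sum of $m$ elements of $B$, i.e.\ $B$ is $B_m^*[g]$. Finally, using $\card{A}\ge\tfrac12 M^{1/m}$, $M\asymp_m N/h$, and $h\asymp_m g^{1/(m-1)}$,
\[
\card{B}=h\cdot\card{A}\gtrsim_m h\,(N/h)^{1/m}=h^{(m-1)/m}N^{1/m}\asymp_m g^{1/m}N^{1/m}=(gN)^{1/m},
\]
which is \eqref{eqn: F lower bound}.

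The only step carrying genuine content is Step 1: greedy or probabilistic constructions do not reach the exponent $1/m$, so the finite-field construction --- equivalently, the density of primes together with the polynomial-identity argument above --- is essential. Everything else is bookkeeping; the one mild subtlety is to choose the thickening parameter $h$ so that the harmless factor $m\cdot m!$ in the representation count is absorbed into $g$ rather than degrading $c_m$, after which the floors and Bertrand's factor $\tfrac12$ only affect the final value of $c_m$.
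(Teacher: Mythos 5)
Your argument is correct and follows essentially the route the paper indicates for this statement (which it only cites, attributing the $g=1$ case to Bose--Chowla and describing the general case as ``gluing scaled copies''): Step 1 is the Bose--Chowla finite-field construction, and your thickened set $B=\{ha+j: a\in A,\ 0\le j<h\}$ is exactly a gluing of $h$ translates of the dilated copy $hA$, with the representation count $m\cdot m!\cdot h^{m-1}\le g$ and the cardinality bookkeeping carried out correctly. One minor caveat: your aside that for $g\gtrsim_m N^{m-1}$ ``there is nothing to prove'' is not accurate --- in that regime the stated inequality can genuinely fail since $F_{m,g}(N)\le N$ --- but this is an artifact of the theorem's informal ``for all $N,g$ sufficiently large'' phrasing rather than a gap in your construction, and it is immaterial here because the paper only invokes the bound with $g$ a fixed constant depending on $m$ and $N\to\infty$.
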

The $g=1$ case was established in 1962 by Bose--Chowla \cite{bc}. Theorem \ref{theorem: bmg lower bound} is proved by `gluing' together scaled copies of Bose and Chowla's $B_m$ sets. 
In light of the upper bound \eqref{eqn: F upper bound}, we find that \eqref{eqn: F lower bound} is sharp up to a constant factor depending on $m$. It is an active problem in the field of additive combinatorics to optimize this dependence on $m$. However, it is not relevant for our purposes. For more on the subject, we recommend the survey papers \cite{plagne,o'bryant}.
\begin{lemma}\label{lemma: sidon plus 1}   
Suppose $A$ is a $B_m^*[g]$ set and let $b\in\mathbb{N}\setminus A$. Then the set $A\cup\{b\}$ is a $B_m^*[1+m+(m-1)g]$ set.
\end{lemma}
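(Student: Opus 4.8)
The plan is to bound, for an arbitrary $n \in \mathbb{N}_0$, the number of representations of $n$ as an ordered sum of $m$ elements of $A \cup \{b\}$, by splitting according to how many of the summands equal the new element $b$. Write $R(n)$ for the representation count over $A\cup\{b\}$ and, for each $0 \le k \le m$, let $R_k(n)$ count those tuples $(a_1,\dots,a_m) \in (A\cup\{b\})^m$ with $a_1+\dots+a_m = n$ in which exactly $k$ of the coordinates are equal to $b$. Then $R(n) = \sum_{k=0}^m R_k(n)$, and the task reduces to estimating each $R_k(n)$ in terms of $g$ and the $B_m^*[g]$ property of $A$. Note that since $b \notin A$, a coordinate equals $b$ if and only if it is not in $A$, so this partition is well-defined and the cases are genuinely disjoint.

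The key observation for each term: fixing \emph{which} $k$ of the $m$ coordinates are the $b$'s costs a factor $\binom{m}{k}$ (a choice of positions), and once those positions are fixed, the remaining $m-k$ coordinates lie in $A$ and must sum to $n - kb$. For $k \le m-1$ this is a sum of $m-k \ge 1$ elements of $A$. When $k = 0$ we simply get $R_0(n) \le g$ directly from the $B_m^*[g]$ hypothesis. When $k = m$ there is at most one tuple (all coordinates $b$), contributing $1$, and this accounts for the leading $1$ in the claimed bound $1 + m + (m-1)g$. The middle cases $1 \le k \le m-1$ are where the real work is: I need that the number of ordered $(m-k)$-tuples in $A$ summing to a fixed value is at most $g$. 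For $k \le m-2$ this is exactly a $B_{m-k}^*$-type count, and I would use the standard fact that a $B_m^*[g]$ set is also $B_j^*[g]$ for every $2 \le j \le m$ (pad a $j$-term equation $a_1+\dots+a_j = n$ with $m-j$ copies of a fixed element $a \in A$ to get an $m$-term equation $a_1 + \dots + a_j + (m-j)a = n + (m-j)a$, whose solution count is $\ge$ that of the original; if $A$ is empty or a singleton the statement is trivial). The genuinely delicate case is $k = m-1$: then the single remaining coordinate is forced to equal $n-(m-1)b$, so there is \emph{at most one} such tuple for each choice of positions, giving $R_{m-1}(n) \le \binom{m}{m-1} = m$ — this is the source of the $+m$ term.

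Assembling: $R(n) \le R_m(n) + R_{m-1}(n) + \sum_{k=0}^{m-2} R_k(n) \le 1 + m + \sum_{k=0}^{m-2}\binom{m}{k} g$. This bound, however, has $g$ multiplied by $\sum_{k=0}^{m-2}\binom{m}{k} = 2^m - m - 1$, which is larger than the claimed $(m-1)g$. So the crude positional-choice estimate is too lossy, and the main obstacle is to argue more carefully that the \emph{total} contribution of all tuples using at least one $b$ but not all $b$'s is only $m + (m-1)g$, not $m + (2^m-m-1)g$. I expect the right move is to not separate "choice of $b$-positions" from "the equation in $A$", but instead to bound things by comparison with a single $m$-term equation over $A$: given a tuple with $1 \le k \le m-1$ copies of $b$ and $A$-coordinates summing to $n-kb$, replace the $k$ copies of $b$ by $k$ copies of a fixed $a_0 \in A$ to produce a solution of $a_1 + \dots + a_m = n - kb + k a_0$ over $A$ (with the $b$-positions now pinned), and control the number of such modified solutions by the $B_m^*[g]$ property while keeping careful track of the positional multiplicities so that the positional factor collapses. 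Working out exactly this bookkeeping — getting the positional overcounting down from $2^m$ to $m$, presumably by exploiting that once we condition on the multiset of $A$-values the positions of the $b$'s among the remaining slots interact with the $B_m^*$ count in a way that does not multiply — is the crux, and is where I'd expect the proof to spend its effort; the $k=m-1$ and $k=m$ boundary cases are then just the easy endpoints of that analysis.
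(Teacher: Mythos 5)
Your decomposition by the number $k$ of coordinates equal to $b$, and your bounds for the three boundary classes ($k=0$: at most $g$; $k=m-1$: at most $m$; $k=m$: at most $1$), coincide exactly with the paper's proof, and your bound $\binom{m}{k}g$ for $1\le k\le m-2$ (choice of the $b$-positions times the padded $B_{m-k}^*$-count) is correct. The ``crux'' you flag --- collapsing the positional factor so that the middle classes contribute only $(m-1)g$ in total --- is not resolved in the paper either. The paper's argument reorders each tuple so that the $j$ copies of $b$ sit in the last coordinates, bounds the reordered tuples by $g$ via exactly the padding injection you describe, and then asserts that the original ordered class has at most $g$ elements, silently discarding the $\binom{m}{j}$ choices of positions. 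For ordered tuples that intermediate assertion is false: take $A=\{0,1\}$, which is $B_3^*[3]$, and $b=5$, $n=6$; then there are six ordered triples in $(A\cup\{b\})^3$ summing to $n$ with exactly one coordinate equal to $b$, whereas the paper's step claims at most $g=3$. So what the paper's argument actually establishes is precisely your bound $1+m+(2^m-m-1)g$; reaching the literal constant $1+m+(m-1)g$ would require a genuinely different idea (and whether that exact constant is even correct would need a separate analysis), so do not expect your suggested ``collapse'' to go through --- the $\binom{m}{j}$ tuples obtained by moving the $b$'s around are genuinely distinct ordered solutions.

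This discrepancy is harmless both for the paper and for your write-up: the lemma is only used to conclude that $P(N;2m)$ is $B_m^*[g_m]$ for some constant $g_m$ depending only on $m$, which then feeds into Lemma~\ref{lemma: *}(v), Claim~\ref{claim: claim 2}, and the choice of $N$ in \eqref{eqn: choice of N}; any bound of the form $g'=g'(m,g)$ works verbatim, with only the numerical value of $g_m$ changing. So the clean fix is to prove the statement with the constant $1+m+(2^m-m-1)g$ (or simply ``some $g'$ depending only on $m$ and $g$''), and your argument --- once you write out the padding step for the classes with $2\le m-k\le m$ and the two trivial endpoint classes, all of which you already have --- proves that completely; the speculative final paragraph of your proposal can then be dropped.
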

\begin{proof}
    Let $B:=A\cup\{b\}$, and let $n\in\mathbb{N}_0$. We wish to count the number of tuples $(a_1,\dots,a_m)\in B^m$ satisfying \begin{equation}\label{eqn: bmg defining eqn} a_1+\dots+a_m=n.\end{equation}
    We have at most $g$ solutions coming from tuples of the form $(a_1,\dots,a_m)\in A^m$, since $A$ is a $B_m^*[g]$ set. More generally, we can fix $0\leq j\leq m$ and consider tuples $(a_1,\dots,a_m)\in B^m$ where $j$ entries of the tuples are equal to $b$. First let $0\leq j\leq m-2$. Let us rename the integers so that $a_1',\dots,a_{m-j}'\in A$ and $a_{m-j+1}'=\dots=a_m'=b$. Then $$a_1+\dots+a_m=n\iff a_1'+\dots+a_{m-j}'+jb=n.$$ Now the latter equation can be rewritten as \begin{equation}\label{eqn: bmg equation} a_1'+\dots+a_{m-j}'+ja=n+j(a-b),\end{equation} for some fixed $a\in A$. Since $A$ is a $B_m^*[g]$ set, the equation $$a_1'+\dots+a_m'=n+j(a-b)$$ has at most $g$ solutions  $(a_1',\dots,a_m')\in A^m$. Of these $g$ solutions, the tuples $(a_1',\dots,a_m')$ with $a_{m-j+1}'=\dots=a_m'=a$, are in a one-to-one correspondence with the solutions of \eqref{eqn: bmg equation}. The solutions of \eqref{eqn: bmg equation} are in turn in a one-to-one correspondence with solutions to the equation $$a_1'+\dots+a_{m-j}'+jb=n,$$ via $(a_1',\dots,a_{m-j}',a,\dots,a)\mapsto(a_1',\dots,a_{m-j}',b,\dots,b)$. 
It follows that \eqref{eqn: bmg defining eqn} has at most $g$ solutions $(a_1,\dots,a_m)\in B^m$ in this case. For $j=m-1$ and $j=m$, \eqref{eqn: bmg defining eqn} has at most $m$ and $1$ solution, respectively. Adding all the solutions for $j=0,1,\dots,m$ proves the result.
\end{proof}

\subsection{$\Lambda(p)$ sets}
We discuss some properties of $\Lambda(p)$ sets.
\begin{definition}
    For $A\subset\mathbb{Z}$ let $c_{00}(A)$ denote the linear space of all sequences indexed by $A$ with only finitely many non-zero terms. For $2\leq p\leq\infty$ define $$\|A\|_{\Lambda(p)}:=\sup\{\|\sum_{n\in A}a_ne^{2\pi inx}\|_{L^p([0,1])}:(a_n)_{n\in A}\in c_{00}(A)\text{ with } \sum_{n\in A}|a_n|^2\leq 1\}.$$
    The set $A$ is said to be a $\Lambda(p)$ set if $\|A\|_{\Lambda(p)}<\infty$.
\end{definition}
Any finite set $A\subset\mathbb{Z}$ satisfies $$\|A\|_{\Lambda(p)}\leq \mathrm{card}(A)^{1/2-1/p},$$ which follows from Plancherel's theorem, Cauchy--Schwarz, and the log-convexity of $L^p$ norms. In particular, every $A\subset [N]$ is $\Lambda(p)$ with $$\|A\|_{\Lambda(p)}\leq N^{1/2-1/p}.$$ We are interested in large sets $A\subset[N]$ which satisfy $$\|A\|_{\lamp}\lesssim_p 1.$$ The following result, when combined with Theorem \ref{theorem: bmg lower bound}, provides an answer for the special case $p\in 2\mathbb{N}$.
\begin{prop}\label{prop: bm implies lambda p}
   Let $A\subset\mathbb{Z}$ be a $B_m[g]$ set. Then $$\|A\|_{\Lambda(2m)}\leq(g\cdot m!)^{1/2m}.$$
\end{prop}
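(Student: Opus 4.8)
The plan is to exploit the standard identity $\|f\|_{L^{2m}}^{2m} = \|f^m\|_{L^2}^2$ together with Plancherel's theorem on $[0,1]$, and then control the Fourier coefficients of $f^m$ using the $B_m[g]$ hypothesis. Concretely, fix a sequence $(a_n)_{n\in A}\in c_{00}(A)$ with $\sum_{n\in A}|a_n|^2\leq 1$ and set $f(x):=\sum_{n\in A}a_n e^{2\pi i n x}$. First I would expand the $m$-th power,
\[
f(x)^m=\sum_{(n_1,\dots,n_m)\in A^m}a_{n_1}\cdots a_{n_m}\,e^{2\pi i(n_1+\dots+n_m)x}=\sum_{N\in\mathbb{Z}}c_N\,e^{2\pi iNx},
\]
where $c_N:=\sum_{\substack{(n_1,\dots,n_m)\in A^m\\ n_1+\dots+n_m=N}}a_{n_1}\cdots a_{n_m}$, so that by Plancherel $\|f\|_{L^{2m}}^{2m}=\|f^m\|_{L^2}^2=\sum_{N}|c_N|^2$.

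The key step is to bound each $|c_N|^2$. Since $A$ is $B_m[g]$, the number of \emph{ordered} tuples $(n_1,\dots,n_m)\in A^m$ with $n_1+\dots+n_m=N$ is at most $m!\,g$ (each unordered solution, of which there are at most $g$, is counted at most $m!$ times; this is exactly the observation from \S\ref{sec: sidon} that a $B_m[g]$ set is $B_m^*[g\cdot m!]$). Applying Cauchy--Schwarz to the sum defining $c_N$ against this count gives
\[
|c_N|^2\leq (m!\,g)\sum_{\substack{(n_1,\dots,n_m)\in A^m\\ n_1+\dots+n_m=N}}|a_{n_1}|^2\cdots|a_{n_m}|^2 .
\]
Summing over $N$ and recognising that the double sum ranges over all of $A^m$,
\[
\sum_{N}|c_N|^2\leq (m!\,g)\sum_{(n_1,\dots,n_m)\in A^m}|a_{n_1}|^2\cdots|a_{n_m}|^2=(m!\,g)\Big(\sum_{n\in A}|a_n|^2\Big)^m\leq m!\,g .
\]
Hence $\|f\|_{L^{2m}}^{2m}\leq g\cdot m!$, i.e. $\|f\|_{L^{2m}}\leq (g\cdot m!)^{1/2m}$, and taking the supremum over all admissible $(a_n)$ yields the claimed bound on $\|A\|_{\Lambda(2m)}$.

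There is no serious obstacle here: the argument is a direct orthogonality (Plancherel) computation, and the only point requiring a moment's care is the passage from the unordered count in the definition of a $B_m[g]$ set to the ordered count of representations that actually appears after expanding $f^m$, which is where the factor $m!$ enters. Everything else is a single application of Cauchy--Schwarz and interchanging the order of summation.
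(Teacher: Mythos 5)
Your argument is correct and is essentially the paper's own proof: expand $f^m$, apply Plancherel, bound each Fourier coefficient by Cauchy--Schwarz using the fact that a $B_m[g]$ set admits at most $g\cdot m!$ ordered representations of any integer, and sum. Indeed, your version of the Cauchy--Schwarz step, which keeps the constraint $n_1+\dots+n_m=N$ inside the sum before summing over $N$, is exactly the careful reading of the paper's slightly compressed computation.
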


This result is well-known (see, for instance \cite{rudin}); nevertheless, for completeness, we include the short proof. 

\begin{proof}[Proof of Proposition~\ref{prop: bm implies lambda p}]
    Let $A$ be a $B_m[g]$ set and let $(a_n)_{n\in A}\in c_{00}(A)$.
    We have $$\big(\sum_{n\in A}a_n e^{2\pi i nx}\big)^m=\sum_{n\in A}b_ne^{2\pi inx},$$ where \begin{equation}\label{eqn: bn definition}
        b_n:=\sum_{\substack{n_1\dots,n_m\in A \\ n_1 + \cdots + n_m = n}}a_{n_1}\dots a_{n_m}.
    \end{equation} By Plancherel's theorem we have $$\bigg\|\sum_{n\in A}a_n e^{2\pi inx}\bigg\|_{L^{2m}([0,1])}^m=\big(\sum_{n\in A}|b_n|^2\big)^{1/2}.$$ Since $A$ is a $B_m[g]$ set, the sum on the right-hand side of \eqref{eqn: bn definition} has at most $g\cdot m!$ summands. Thus by Cauchy--Schwarz we have $$|b_n|\leq (g\cdot m!)^{1/2}\bigg(\sum_{n_1\dots,n_m\in A}|a_{n_1}\dots a_{n_m}|^2\bigg)^{1/2}.$$ 
    Now $\sum_{n_1,\dots,n_m\in A}|a_{n_1}\dots a_{n_m}|^2=\big(\sum_{n\in A}|a_n|^2\big)^m$,
    and so by the identity above we get $$\|\sum_{n\in A}a_n e^{2\pi inx}\|_{L^{2m}([0,1])}\leq (g\cdot m!)^\frac{1}{2m}\big(\sum_{n\in A}|a_n|^2\big)^{1/2},$$ which proves the claim.
\end{proof}
By Theorem \ref{theorem: bmg lower bound}, for $N$ sufficiently large, there exists a $B_m$ set $A\subset [N]$ with $\text{card}(A)\geq c_m N^{1/m}$. In light of Proposition \ref{prop: bm implies lambda p}, $$\|A\|_{\Lambda(2m)}\lesssim_m 1.$$ Thus, we have proved the following.
\begin{corollary}
    For all integer $m\geq 2$ and $N\in\mathbb{N}$ sufficiently large, there exists a set $S_N\subset [N]$ with $\mathrm{card}(S_N)\gtrsim_m N^{1/m}$ satisfying $$\|S_N\|_{\Lambda(2m)}\lesssim_m 1.$$
\end{corollary}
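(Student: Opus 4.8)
The plan is to combine the two inputs assembled in this subsection: the Bose--Chowla lower bound on the size of $B_m$ sets (the $g=1$ case of Theorem~\ref{theorem: bmg lower bound}) and the fact, recorded in Proposition~\ref{prop: bm implies lambda p}, that $B_m$ sets are $\Lambda(2m)$ with a constant depending only on $m$. There is essentially nothing to do beyond invoking these two results in sequence.

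First I would apply Theorem~\ref{theorem: bmg lower bound} with $g=1$: for every $N$ sufficiently large (in terms of $m$) there is a $B_m$ set $A\subset[N]$ with $\mathrm{card}(A)\geq c_m N^{1/m}$, where $c_m>0$ depends only on $m$. (Concretely this is Bose--Chowla's construction: for a prime power $q$ one produces a $B_m$ set inside $[q^{m}-1]$ of size $q$, and passing to every large $N$ is handled by the gluing/density argument indicated right after the statement of Theorem~\ref{theorem: bmg lower bound}.) Setting $S_N:=A$ already gives the cardinality bound $\mathrm{card}(S_N)\gtrsim_m N^{1/m}$.

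Next I would feed $S_N$ into Proposition~\ref{prop: bm implies lambda p}. A $B_m$ set is, by definition, a $B_m[g]$ set with $g=1$, so the proposition gives
\[
\|S_N\|_{\Lambda(2m)}\leq (1\cdot m!)^{1/2m}=(m!)^{1/2m}.
\]
Since $m$ is fixed, the right-hand side is a constant depending only on $m$, i.e. $\|S_N\|_{\Lambda(2m)}\lesssim_m 1$, which is exactly the claimed bound. This completes the argument.

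I do not expect any genuine obstacle: the corollary is bookkeeping on top of Theorem~\ref{theorem: bmg lower bound} and Proposition~\ref{prop: bm implies lambda p}. The only point worth stating carefully is that the $\Lambda(2m)$-constant produced is $(m!)^{1/2m}$ rather than an absolute constant, which is harmless here since all implicit constants in the corollary are allowed to depend on $m$; and the one mildly non-trivial ingredient — upgrading the construction from prime powers to all sufficiently large $N$ — is already subsumed in Theorem~\ref{theorem: bmg lower bound}, so it need not be reproven.
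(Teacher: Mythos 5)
Your argument is correct and is essentially identical to the paper's: the corollary is obtained exactly by combining the $g=1$ (Bose--Chowla) case of Theorem~\ref{theorem: bmg lower bound} with Proposition~\ref{prop: bm implies lambda p}, giving $\|S_N\|_{\Lambda(2m)}\leq (m!)^{1/2m}\lesssim_m 1$. No gaps; your remarks about the $m$-dependence of the constant and the passage from prime powers to general $N$ are consistent with how the paper treats these points.
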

The following celebrated theorem due to Bourgain~\cite{Bourgain_lambdap} (see also \cite{Talagrand2014, Ryou2022}) is a generalization of the corollary above.
\begin{theorem}[{Bourgain \cite[Theorem 1]{Bourgain_lambdap}}]\label{thm: bourgain lambda p}
For all $p\geq 2$ and $N\in\mathbb{N}$ sufficiently large, there exists a set $S_N\subset[N]$ with $\card{S_N}\gtrsim_p N^{2/p}$ satisfying $$\|S_N\|_{\Lambda(p)}\lesssim_p 1.$$
\end{theorem}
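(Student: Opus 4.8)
\medskip
\noindent\textbf{Sketch of the proof.}
Since the Corollary above already settles the case $p\in2\mathbb{N}$ (by taking $S_N$ to be a $B_{p/2}$ set, via Theorem~\ref{theorem: bmg lower bound} and Proposition~\ref{prop: bm implies lambda p}), and no comparable arithmetic structure is available for general real $p$, the plan is to choose $S_N$ at random. Fix $p>2$, set $\delta:=N^{2/p-1}\in(0,1)$, and let $(\xi_n)_{n=1}^N$ be independent Bernoulli variables with $\mathbb{E}\xi_n=\delta$; put $S_N:=\{n\in[N]:\xi_n=1\}$. Then $\mathbb{E}\,\card{S_N}=\delta N=N^{2/p}$, and since $\delta N\to\infty$ a Chernoff bound yields $\card{S_N}\ge\tfrac12N^{2/p}$ with probability $\ge3/4$ once $N$ is large. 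Writing
\[
\Phi(\xi):=\|S_N\|_{\Lambda(p)}=\sup\Big\{\,\big\|\sum_{n=1}^N\xi_n a_n e^{2\pi inx}\big\|_{L^p([0,1])}:\ \sum_{n=1}^N|a_n|^2\le1\,\Big\},
\]
everything reduces to the single estimate $\mathbb{E}_\xi\,\Phi(\xi)\le C_p$ with $C_p$ depending only on $p$: Markov's inequality then gives $\Phi(\xi)\le2C_p$ with probability $\ge1/2$, so with probability $\ge1/4$ one simultaneously has $\card{S_N}\gtrsim N^{2/p}$ and $\|S_N\|_{\Lambda(p)}\le2C_p$, and any realisation in this event furnishes the desired set.

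To bound $\mathbb{E}_\xi\,\Phi(\xi)$, I would first discretise: since all frequencies lie in $[N]$, the Marcinkiewicz--Zygmund inequality lets me replace $\|\cdot\|_{L^p([0,1])}$ by the discrete average $\big(M^{-1}\sum_{k=1}^M|\cdot(x_k)|^p\big)^{1/p}$ over $M\approx N$ equally spaced nodes $x_k$, up to constants depending only on $p$, reducing the problem to a finite-dimensional one. Next I would symmetrise: the mean part is negligible because $\delta\sup_{\|a\|_2\le1}\|\sum_n a_ne^{2\pi inx}\|_{L^p}\le\delta N^{1/2-1/p}=N^{1/p-1/2}\to0$, and the standard symmetrisation inequality then bounds the remainder by a constant times $\mathbb{E}_{\xi,\varepsilon}\sup_{\|a\|_2\le1}\big\|\sum_n\varepsilon_n\xi_n a_ne^{2\pi inx}\big\|_{L^p}$ with $(\varepsilon_n)$ independent signs. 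For a \emph{fixed} $a$, conditioning on $\xi$ and applying Khintchine's inequality pointwise in $x$ gives $\mathbb{E}_\varepsilon\|\sum_{n\in S_N}\varepsilon_n a_ne^{2\pi inx}\|_{L^p}\lesssim_p\|a\|_2$; the crux is to make this bound uniform in $a$ over the unit ball of $\ell^2(S_N)$ \emph{inside} the expectation.

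That uniformisation is the main obstacle, and I would carry it out by chaining (Dudley's entropy bound, or generic chaining). After dualising the $L^p_x$-norm, the quantity to control is the expected supremum of a sub-Gaussian process indexed by $a\in B_{\ell^2(S_N)}$ (together with the dual function), whose increments are governed by a random metric built from the vectors $\big(\xi_n e^{2\pi inx_k}\big)_{n,k}$, and this is estimated by an entropy integral of $B_{\ell^2(S_N)}$ in that metric. The exponent $2/p$ is forced precisely here: the entropy integral converges to a bound independent of $N$ exactly when $\card{S_N}\approx N^{2/p}$, a larger set making the ball too high-dimensional for the constant to remain bounded. Carrying this out with the sharp exponent and a constant depending on $p$ alone is the technical core of \cite{Bourgain_lambdap} (whose argument proceeds by an iteration); it was subsequently recast through majorizing measures in \cite{Talagrand2014} and via a different method in \cite{Ryou2022}.
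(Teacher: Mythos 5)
The paper does not prove this statement at all: it is quoted as Bourgain's theorem \cite{Bourgain_lambdap} and used as a black box (with \cite{Talagrand2014,Ryou2022} cited as alternative treatments), so there is no internal argument to compare yours against. Judged on its own terms, your sketch sets up the standard probabilistic-selector framework correctly: Bernoulli selectors with mean $\delta=N^{2/p-1}$, a Chernoff bound for $\card{S_N}\gtrsim N^{2/p}$, Markov to combine the two events, Marcinkiewicz--Zygmund discretisation, symmetrisation (with the correct observation that the mean term is $O(N^{1/p-1/2})$), and Khintchine for a \emph{fixed} coefficient vector $a$. All of that is routine and sound.

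The genuine gap is the uniformisation over the unit ball of $\ell^2(S_N)$, which is not a technical afterthought but \emph{is} the theorem. Your proposed mechanism, Dudley's entropy bound for the resulting sub-Gaussian process, is known to be insufficient: a covering-number chaining argument for $\sup_{\|a\|_2\le 1}\|\sum_n\varepsilon_n\xi_n a_n e^{2\pi i n x}\|_{L^p}$ does not close with an $N$-independent constant and incurs logarithmic losses, which is precisely why Bourgain's proof proceeds by a delicate iteration over levels of the coefficient vector rather than by a single entropy integral, and why Talagrand's later proof requires the full majorizing-measure/generic-chaining machinery with a problem-specific construction of the admissible sequence, not the metric-entropy special case. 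The assertion that ``the entropy integral converges to a bound independent of $N$ exactly when $\card{S_N}\approx N^{2/p}$'' is offered without computation and is not correct for the Dudley route. Since you ultimately defer this step to \cite{Bourgain_lambdap} itself, the proposal amounts to reducing the statement to the cited literature --- which mirrors how the paper uses it, but as a proof attempt it leaves the core of the theorem unproved. (Separately, note the statement is also used in the paper for the range $2<p<4$ via Corollary \ref{cor: lambda p}, where $N^{2/p}>N^{1/2}$; any purported proof should be checked against this regime, where square-function heuristics that work for sets of size at most $N^{1/2}$ can be misleading.)
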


The following result is easily verified. 
\begin{lemma} Let $A_1, A_2, A \subset\mathbb{Z}$ and $n \in \mathbb{Z}$. Then 
\begin{enumerate}[label=(\roman*)]
    \item $\|A_1\cup A_2\|_{\Lambda(p)}\leq\|A_1\|_{\lamp}+\|A_2\|_{\lamp}$;
    \item $\|n+ A\|_{\Lambda(p)} = \|A\|_{\Lambda(p)}$.
\end{enumerate}
\end{lemma}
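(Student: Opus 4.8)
The final statement is the elementary Lemma asserting that $\|A_1\cup A_2\|_{\Lambda(p)}\leq\|A_1\|_{\Lambda(p)}+\|A_2\|_{\Lambda(p)}$ and $\|n+A\|_{\Lambda(p)}=\|A\|_{\Lambda(p)}$. The plan is to prove both parts directly from the definition of $\|\cdot\|_{\Lambda(p)}$ as a supremum over sequences in $c_{00}$ with $\ell^2$-norm at most one, using only the triangle inequality in $L^p([0,1])$ and the fact that multiplication by a unimodular character is an $L^p$-isometry.

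For part (i), I would take any $(a_n)_{n\in A_1\cup A_2}\in c_{00}(A_1\cup A_2)$ with $\sum_{n\in A_1\cup A_2}|a_n|^2\leq 1$, and split the trigonometric polynomial as $\sum_{n\in A_1\cup A_2}a_ne^{2\pi inx}=\sum_{n\in A_1}a_ne^{2\pi inx}+\sum_{n\in A_2\setminus A_1}a_ne^{2\pi inx}$. The triangle inequality in $L^p([0,1])$ bounds this by the sum of the two $L^p$ norms; since the restricted coefficient sequences each have $\ell^2$-norm at most $1$, each term is bounded by $\|A_1\|_{\Lambda(p)}$ and $\|A_2\|_{\Lambda(p)}$ respectively (using that $A_2\setminus A_1\subset A_2$, and that restricting a coefficient sequence to a subset only decreases the $\ell^2$-norm). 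Taking the supremum over $(a_n)$ gives the claim. For part (ii), given $(a_n)_{n\in A}$ I would observe $\sum_{n\in A}a_ne^{2\pi i(n+j)x}=e^{2\pi ijx}\sum_{n\in A}a_ne^{2\pi inx}$ for the shift parameter, and since $|e^{2\pi ijx}|=1$ pointwise the $L^p$ norms coincide; reindexing the coefficient sequence by $m=n+j$ is an $\ell^2$-isometric bijection between $c_{00}(A)$ and $c_{00}(n+A)$, so the two suprema are equal.

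There is essentially no obstacle here; the only point requiring a half-sentence of care is that in part (i) one should decompose over the disjoint union $A_1\sqcup(A_2\setminus A_1)$ rather than over $A_1$ and $A_2$ separately, so that the coefficients are not double-counted and the $\ell^2$ constraint is respected on each piece. Everything else is the triangle inequality and an isometry, so the proof is two or three lines and I would simply write "The proof is immediate from the definitions" if the paper's style tolerates it, or spell out the two displays above otherwise.
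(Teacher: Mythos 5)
Your proof is correct: the paper states this lemma without proof (``easily verified''), and your argument --- the triangle inequality in $L^p([0,1])$ applied to the disjoint decomposition $A_1\sqcup(A_2\setminus A_1)$ for (i), and the unimodular modulation $e^{2\pi i n x}$ together with the $\ell^2$-isometric reindexing for (ii) --- is exactly the intended verification. The one point of care you flag, splitting over $A_1$ and $A_2\setminus A_1$ so the $\ell^2$ constraint passes to each piece, is indeed the only detail worth noting.
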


\subsection{From points to intervals}
For technical reasons, we define the following weight functions. Given a bounded interval $Q\subset\R$, we define \begin{equation}
    \label{eqn: weight w_Q}
    w_Q(x):=\bigg(1+\frac{|x-c_Q|}{|Q|}\bigg)^{-10}.
\end{equation}
Before stating the main result of this subsection, we make the following observations. If $Q_x$ is an interval containing $x \in \R$ and $Q$ is a second bounded interval, by triangle inequality it follows that $$\big||x-c_Q|-|c_{Q_x}-c_Q|\big|\leq |Q_x|/2.$$ As such, if $|Q|=|Q_x|$, then $$w_Q(x)\approx w_Q(c_{Q_x}).$$
If $\mathcal{Q}$ is a collection of pairwise disjoint congruent intervals, then \begin{equation}\label{eqn: sum of weights is bounded}
\sum_{Q\in\mathcal{Q}}w_Q(x)\approx\sum_{Q\in\mathcal{Q}}w_Q(c_{Q_x})\leq\sum_{j=1}^\infty j^{-10}\lesssim 1\quad\text{for all $x\in\R$}.\end{equation} 

We state the following lemma, which is crucial for the construction of our domains. 

\begin{lemma}\label{lemma: *}
 For $p>2$ and all $N$ sufficiently large, there exists a family $\mathscr I(N;p)$ of $N$ subintervals of $[-1/2,1/2]$ satisfying the following properties. 
 \begin{enumerate}[label=(\roman*)]
     \item Each interval in $\I(N;p)$ has length $N^{-p/2}$.
     \item The intervals in $\I(N;p)$ are separated by $(p/4)N^{-p/2}$: $$\dist(I_1,I_2)\geq (p/4)N^{-p/2}\quad\text{for all}\quad I_1,I_2\in\I(N;p),\;I_1\neq I_2.$$
     \item There exists a constant $c_p>0$ such that for all intervals $Q$ of length $N^{p/2}$ we have $$\|\sum_{I\in\I(N;p)}f_I\|_{L^p(Q)}\leq c_p\big(\sum_{I\in\I(N;p)}\|f_I\|^2_{L^p(w_Q)}\big)^{1/2},$$ for all $f_I\in\mathcal{S}(\R)$ with Fourier support in $I\in\I(N;p)$. 
     \item The family $\I(N;p)$ contains the intervals $I^-:=[-1/2,-1/2+N^{-p/2}]$ and $I^+:=[1/2-N^{-p/2},1/2]$. 
 \end{enumerate}
 For the special case $p=2m$ with $m\in\mathbb{N}$, the family $\I(N;2m)$ can be chosen to also satisfy the following.
 \begin{enumerate}[label=(\roman*)]
 \setcounter{enumi}{4}
     \item There exists a constant $g_m\geq 1$, depending only on $m$, such that $$\max_{y\in\R}\;\;\mathrm{card}\{(I_1,\dots,I_m)\in \I(N;2m)^m:y\in I_1+\dots+I_m\}\leq g_m.$$
 \end{enumerate}
 \end{lemma}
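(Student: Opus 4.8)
The plan is to build $\I(N;p)$ from a large $\Lambda(p)$ set and then dilate. Fix $p>2$. By Theorem~\ref{thm: bourgain lambda p}, for $M$ sufficiently large there is a set $S\subset[M]$ with $\card{S}\gtrsim_p M^{2/p}$ and $\|S\|_{\Lambda(p)}\lesssim_p 1$. We want exactly $N$ intervals, so we choose $M=M(N)$ of size roughly $N^{p/2}$ so that $\card S\geq N$, and then pass to a subset of $S$ of cardinality exactly $N$; since a subset of a $\Lambda(p)$ set is $\Lambda(p)$ with no larger constant (the definition is a supremum over finitely supported coefficient sequences), this only helps. To arrange property (iv), I would additionally insist that $S$ contains both endpoints $1$ and $M$ of $[M]$: if Bourgain's set misses them, simply adjoin them; part~(i) of the last displayed Lemma (subadditivity of $\|\cdot\|_{\Lambda(p)}$ under unions) together with the trivial bound $\|\{k\}\|_{\Lambda(p)}=1$ shows the $\Lambda(p)$ constant grows by at most $2$. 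Now rescale: set $\I(N;p):=\{\,[\tfrac{n-1}{M}-\tfrac12,\ \tfrac{n-1}{M}-\tfrac12+M^{-1}\,]: n\in S'\,\}$ for the chosen $N$-element subset $S'$, translated so the intervals sit inside $[-1/2,1/2]$ and the endpoints of the ambient interval are covered by $I^{\pm}$. Choosing $M=\lceil (p/4+1)\rceil\cdot\lceil N^{p/2}\rceil$ or similar forces each interval to have length $\approx N^{-p/2}$ (property (i), up to adjusting constants which is harmless) and forces consecutive intervals, which are indexed by distinct integers in $[M]$, to be separated by at least $M^{-1}\gtrsim N^{-p/2}$; tuning the constant in $M$ against $p/4$ gives property (ii) exactly.

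The substantive point is property (iii), the $\ell^2 L^p$ decoupling/square-function inequality, and this is where Bourgain's $\Lambda(p)$ bound enters quantitatively. The mechanism is the standard transference between a trigonometric-polynomial inequality on the circle and a Fourier-support inequality on the line, combined with the uncertainty principle. Suppose $f_I$ has Fourier support in $I$, where $I$ is the interval associated to $n\in S'$; write $f_I=e_n\cdot h_I$ where $e_n(x)=e^{2\pi i n x/M}$ roughly and $h_I$ has Fourier support in an interval of length $M^{-1}$ centred at the origin, hence is essentially constant on intervals of length $M$. On a fixed interval $Q$ of length $M\approx N^{p/2}$, the functions $h_I$ are all $\approx$ constant, say $\approx c_I$, so $\|\sum_I f_I\|_{L^p(Q)}\approx \big(\int_Q |\sum_I c_I e_n(x)|^p\,dx\big)^{1/p}\lesssim |Q|^{1/p}\|S'\|_{\Lambda(p)}(\sum_I |c_I|^2)^{1/2}\lesssim_p |Q|^{1/p}(\sum_I |c_I|^2)^{1/2}\approx_p (\sum_I \|f_I\|_{L^p(Q)}^2)^{1/2}$. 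Making ``essentially constant'' precise is exactly what the weights $w_Q$ are for: one replaces the sharp cutoff by a Schwartz weight adapted to $Q$, uses that $\widehat{h_I}$ supported in $[-M^{-1},M^{-1}]$ makes $h_I$ slowly varying at scale $M$, and sums the resulting tails over a partition of $\R$ into translates of $Q$ using \eqref{eqn: sum of weights is bounded}. This yields (iii) with a constant $c_p$ depending only on $p$ (through $\|S\|_{\Lambda(p)}$ and absolute tail constants).

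Finally, for $p=2m$ with $m\in\mathbb N$, property (v) is obtained by the same construction but taking $S$ to be a $B_m$ (or $B_m^*[g]$) set rather than a generic $\Lambda(p)$ set: Theorem~\ref{theorem: bmg lower bound} gives a $B_m$ set in $[M]$ of size $\gtrsim_m M^{1/m}=M^{2/p}$, and Proposition~\ref{prop: bm implies lambda p} shows it is automatically $\Lambda(2m)$ with constant $\lesssim_m 1$, so properties (i)--(iv) go through verbatim with this choice. Adjoining the two endpoints preserves the $B_m^*[g]$ property by Lemma~\ref{lemma: sidon plus 1} (iterated twice), and restricting to an $N$-element subset clearly preserves it. Since the intervals are the $M^{-1}$-neighbourhoods of the rescaled points $\{(n-1)/M-1/2:n\in S'\}$, a point $y$ lies in $I_1+\dots+I_m$ only if $y+\tfrac m2$ is within $mM^{-1}$ of some $m$-fold sum $(n_1+\dots+n_m)/M$ with $n_j\in S'$; since the $n_j$ are integers, at most one value of the integer $n_1+\dots+n_m$ is available for a given $y$, and the $B_m^*[g]$ property bounds the number of ordered tuples realising that value by $g=g_m$. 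This gives (v). The main obstacle is the careful bookkeeping in step (iii) — converting the clean circle estimate into the weighted line estimate with the $w_Q$'s — rather than anything conceptually deep; everything else is arithmetic tuning of the scale $M$.
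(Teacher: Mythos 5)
Your overall strategy coincides with the paper's: Bourgain's $\Lambda(p)$ sets (and $B_m$-type sets when $p=2m$), endpoints adjoined via the union lemma and Lemma~\ref{lemma: sidon plus 1}, integer points rescaled to intervals, a transference argument with the weights $w_Q$ for (iii), and integrality of $m$-fold sums for (v); the paper makes your ``essentially constant'' step in (iii) rigorous by multiplying by a Schwartz function $\psi_Q$ with Fourier support in $[-1/4,1/4]$, invoking Proposition~\ref{prop: laba wang}, then Plancherel and H\"older. The genuine gap is in the geometry and the constants of the point set. As written, your intervals have length $M^{-1}$, which equals the minimal spacing of the rescaled points; if $S'$ contains consecutive integers the corresponding intervals touch, so (ii) fails outright, and no choice of $M$ repairs this, because separation forces the interval length to be a definite fraction of the spacing. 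Once you shrink the intervals relative to the spacing, properties (i) and (ii) together force the ambient integer interval to have length at most about $N^{p/2}/(1+p/4)$ (the paper uses $N_p\approx N^{p/2}/(4p)$) while still containing $N$ points of the $\Lambda(p)$ set; that is, you need $\mathrm{card}(S)\geq (c_pM)^{2/p}$ with a \emph{specific} constant, which Theorem~\ref{thm: bourgain lambda p}, with its unspecified (possibly small) constant, does not directly provide. Your fix --- enlarging $M$ to a large constant times $N^{p/2}$ --- goes the wrong way: it shrinks the spacing below $N^{-p/2}$, making exact length $N^{-p/2}$ together with separation $(p/4)N^{-p/2}$ impossible; and declaring the length to be only $\approx N^{-p/2}$ is not harmless, since the exact value is what makes the Cantor construction of \S\ref{sec: construction} yield $\kappa_{\Omega_p}=1/p$ on the nose, which is needed for the endpoint dimensions in Theorems~\ref{thm: main} and~\ref{thm: lambda p main}.

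The missing ingredient is precisely the density-boosting step, Corollary~\ref{cor: lambda p}: for $p\notin 2\mathbb{N}$ one glues $k$ translated copies of Bourgain's set for a shorter interval (union lemma plus translation invariance) to raise the cardinality constant to $(4p)^{2/p}$, and for $p=2m$ one takes $g$ large in $F_{m,g}(N)\geq c_m(gN)^{1/m}$ so that the $B_m^*[g]$ set inside $[N_p-1]$ already has $N-2$ elements; with that in hand, the paper's choice $d_p:=N^{-p/2}N_p\leq 1/(2p)$ gives (i), (ii) and (iv) exactly after rescaling, and leaves room for the Fourier support of $\psi_Q$ in the proof of (iii). A smaller remark of the same flavour: with intervals as long as the spacing, your count in (v) is also off --- a given $y$ can meet sumsets corresponding to up to $m+1$ distinct integer values of $n_1+\dots+n_m$, not one --- though this only worsens $g_m$ by a factor $m+1$; in the paper the sumset width $md_p\leq 1/4$ makes the ``only one integer value'' claim literally true.
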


We need two results in order to prove this lemma. The first is the following result from \cite{laba--wang}.
\begin{prop}[{\cite[Lemma 4]{laba--wang}}]\label{prop: laba wang}
    Let $A\subset\mathbb{Z}$ and define $E:=A+[-1/2,1/2]$. Then for all $f\in\mathcal{S}(\R)$ with Fourier support in $E$, and for all unit intervals $I$, we have $$\|f\|_{L^p(I)}\lesssim\|A\|_{\lamp}\|f\|_{L^2(\R)}.$$
\end{prop}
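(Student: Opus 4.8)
The plan is to prove the inequality by a ``freezing'' argument: decompose $f$ into a continuous superposition of genuine $1$-periodic trigonometric polynomials with frequency set $A$, apply the definition of $\|A\|_{\Lambda(p)}$ to each of them on a unit interval, and then reassemble everything using the Schwartz decay of a convolution kernel.

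We may assume $\|A\|_{\Lambda(p)}<\infty$, since otherwise there is nothing to prove. As every finite subset $A'\subset A$ satisfies $\|A'\|_{\Lambda(p)}\le\|A\|_{\Lambda(p)}$ (extend sequences in $c_{00}(A')$ by zero), a routine approximation — multiply $\widehat f$ by a smooth compactly supported Fourier cutoff, which does not enlarge the Fourier support, and let the cutoff tend to $1$, using Fatou at the end — reduces the statement to the case $A$ finite. Then I would write $f=\sum_{n\in A}f_n$ with $\widehat{f_n}=\widehat f\,\mathbf 1_{[n-1/2,n+1/2]}$, and factor $f_n(x)=e^{2\pi inx}g_n(x)$ where $\widehat{g_n}$ is supported in $[-1/2,1/2]$. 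Plancherel together with the disjointness of the Fourier supports then gives $\|f\|_{L^2(\R)}^2=\sum_{n\in A}\|g_n\|_{L^2(\R)}^2$, which is precisely the quantity we must recover.

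For the main step, fix a Schwartz function $\psi$ with $\widehat\psi\equiv 1$ on $[-1/2,1/2]$, so that $g_n=g_n*\psi$ and hence
\[
 f(x)=\int_{\R}\Big(\sum_{n\in A}e^{2\pi inx}g_n(y)\Big)\psi(x-y)\,\dd y .
\]
The point is that for each \emph{fixed} $y$ the inner sum is a $1$-periodic trigonometric polynomial with spectrum $A$ and $\ell^2$-coefficients $(g_n(y))_{n\in A}$, so on \emph{every} unit interval $J$ the definition of $\|A\|_{\Lambda(p)}$ gives $\|\sum_{n\in A}e^{2\pi in\cdot}g_n(y)\|_{L^p(J)}\le\|A\|_{\Lambda(p)}(\sum_{n\in A}|g_n(y)|^2)^{1/2}$. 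Now fix the target unit interval $I$, partition $\R=\bigsqcup_{k\in\mathbb Z}(I+k)$, apply the triangle inequality in $k$ and Minkowski's integral inequality in $y$, and bound $\sup_{x\in I}|\psi(x-y)|\lesssim_N(1+|k|)^{-N}$ for $y\in I+k$, to obtain
\[
 \|f\|_{L^p(I)}\lesssim_N\|A\|_{\Lambda(p)}\sum_{k\in\mathbb Z}(1+|k|)^{-N}\int_{I+k}\Big(\sum_{n\in A}|g_n(y)|^2\Big)^{1/2}\dd y .
\]
Finally, Cauchy--Schwarz on the unit interval $I+k$ bounds the integral by $(\sum_{n\in A}\|g_n\|_{L^2(I+k)}^2)^{1/2}$, and a second Cauchy--Schwarz in $k$ with the summable weights $(1+|k|)^{-N}$ ($N\ge 2$), together with $\sum_{k\in\mathbb Z}\|g_n\|_{L^2(I+k)}^2=\|g_n\|_{L^2(\R)}^2$, collapses the right-hand side to $\|A\|_{\Lambda(p)}(\sum_{n\in A}\|g_n\|_{L^2(\R)}^2)^{1/2}=\|A\|_{\Lambda(p)}\|f\|_{L^2(\R)}$.

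I do not anticipate a genuine obstacle here: the entire content is the reproducing identity $g_n=g_n*\psi$, which rewrites the frequency band at $n$ as a slowly varying envelope times the character $e^{2\pi inx}$ and thereby lets the one-dimensional $\Lambda(p)$ inequality act in the $x$-variable on a fixed unit interval. The only points that need a little care are the justification of the interchanges of summation, integration and $L^p$-norm — immediate once $A$ is finite, via the triangle and Minkowski inequalities — and the bookkeeping of the Schwartz tails of $\psi$, which is harmless because $(1+|k|)^{-N}\le 1$ and $\sum_{k}(1+|k|)^{-N}<\infty$.
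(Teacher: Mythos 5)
Your argument is correct, but it is not the route the paper takes. The paper (following \L{}aba--Wang) argues by duality: since the Fourier projection $P_E$ onto $E$ is self-adjoint, the desired bound $\|P_Ef\|_{L^p([0,1])}\lesssim\|A\|_{\Lambda(p)}\|f\|_{L^2(\mathbb{R})}$ is equivalent to $\|P_Ef\|_{L^2(\mathbb{R})}\lesssim\|A\|_{\Lambda(p)}\|f\|_{L^{p'}([0,1])}$; after Plancherel, $\|P_Ef\|_{L^2}^2$ is sliced as $\int_{-1/2}^{1/2}\sum_{n\in A}|\widehat f(n+x)|^2\,\mathrm{d}x$, and each slice is controlled by the \emph{dual} form of the $\Lambda(p)$ inequality applied to the modulations $e^{-2\pi i x(\cdot)}f$. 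You instead work on the primal side: split $f$ into frequency bands $f_n=e^{2\pi i n\cdot}g_n$ with $\widehat{g_n}$ supported in $[-1/2,1/2]$, reproduce $g_n=g_n*\psi$, and freeze the envelope at each $y$ so that $f|_I$ becomes an average of genuine $1$-periodic trigonometric polynomials with spectrum $A$, to which the definition of $\|A\|_{\Lambda(p)}$ applies directly (your implicit use of $1$-periodicity of these polynomials, valid since $A\subset\mathbb{Z}$, is what lets you pass from $[0,1]$ to an arbitrary unit interval); the Schwartz tails of $\psi$ and two applications of Cauchy--Schwarz then reassemble $\big(\sum_n\|g_n\|_{L^2(\mathbb{R})}^2\big)^{1/2}=\|f\|_{L^2(\mathbb{R})}$. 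Your preliminary reduction to finite $A$ (smooth Fourier cutoff plus a limiting argument) is needed precisely because the $\Lambda(p)$ definition quantifies over $c_{00}(A)$, and it is carried out correctly. What each approach buys: the paper's proof is shorter and needs no cutoff or tail bookkeeping, at the price of invoking duality twice (once for the operator, once for the $\Lambda(p)$ inequality itself); your proof is entirely direct and makes transparent that the estimate is local on unit intervals with constants coming only from the decay of a single reproducing kernel, at the cost of the approximation step and the convergence bookkeeping you already flag.
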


For completeness, we recall the proof from \cite{laba--wang}.

\begin{proof}[Proof of Lemma~\ref{prop: laba wang}]
    Let $P_E$ denote the Fourier projection operator $$(P_Ef)\;\widehat{}\;:=\chi_E\widehat{f}.$$
    It suffices to show that $$\|P_Ef\|_{L^p([0,1])}\lesssim\|A\|_{\lamp}\|f\|_{L^2(\R)}.$$The claim will then follow by translating the function $f$.
    
    Clearly $P_E$ is self-adjoint, so by duality we need to show that $$\|P_Ef\|_{L^2(\R)}\lesssim \|A\|_{\lamp}\|f\|_{L^{p'}([0,1])}.$$ By Plancherel, we write the left-hand side as $$\|P_Ef\|_{L^2(\R)}=\|\widehat{f}\|_{L^2(E)}.$$
    Now $E$ is the union of the essentially disjoint sets $\big\{n+[-1/2,1/2]:n\in A\big\}$, and so \begin{equation}\label{eqn: laba wang integral}
        \int_{E}|\widehat{f}|^2=\int_{-1/2}^{1/2}\sum_{n\in A}|\widehat{f}(n+x)|^2\dd{x}.
    \end{equation}
    Since $f\in \mathcal S(\R)$, the sequence $(\widehat{f}(n))_{n\in\mathbb{Z}}$ is in $\ell^1$. It follows that, $$\|\sum_{n\in A}e^{2\pi inx}\widehat{f}(n)\|_{L^p([0,1])}\leq\|A\|_{\Lambda(p)}(\sum_{n\in A}|\widehat{f}(n)|^2)^{1/2}\lesssim\|A\|_{\Lambda(p)}\|f\|_{L^2([0,1])}.$$ 
    Using duality again we get $$\|\sum_{n\in A}e^{2\pi inx}\widehat{f}(n)\|_{L^2([0,1])}\lesssim\|A\|_{\Lambda(p)}\|f\|_{L^{p'}([0,1])}.$$ By Plancherel's theorem this implies  $$\sum_{n\in A}|\widehat{f}(n)|^2\lesssim\|A\|^2_{\lamp}\|f\|^2_{L^{p'}([0,1])}.$$
    Since $\widehat{f}(n+x)=(e^{-2\pi ix(\,\cdot\,)}f)\;\widehat{}\;(n)$, by the above estimate applied to $e^{-2\pi ix(\,\cdot\,)}f$ we get $$\sum_{n\in A}|\widehat{f}(n+x)|^2\lesssim\|A\|_{\lamp}^2\|f\|_{L^{p'}([0,1])}^2\quad\text{for all}\quad x\in[-1/2,1/2].$$
    Using this in \eqref{eqn: laba wang integral} we find $$\int_E |\widehat{f}|^2\lesssim\|A\|_{\lamp}^2\|f\|_{L^{p'}([0,1])}^2,$$ which proves the claim.
\end{proof}
The second result we need in order to prove Lemma \ref{lemma: *} is the following consequence of Theorem \ref{thm: bourgain lambda p} and Theorem \ref{theorem: bmg lower bound}. 
\begin{corollary}\label{cor: lambda p}
   For all $p>2$ and $N\in\mathbb{N}$ sufficiently large, there exists $S_N\subset [N]$ with $\card{S_N}\geq (4p N)^{2/p}$ satisfying $$\|S_N\|_{\lamp}\lesssim_p 1.$$ 
   For the special case $p=2m$ for an integer $m\geq 2$, $S_N$ can be chosen to be a $B_m^*[g]$ set, for some $g\geq 1$ depending only on $m$. 
\end{corollary}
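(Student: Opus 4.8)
The plan is to take the sets furnished by Theorem~\ref{thm: bourgain lambda p} (and, in the special case $p=2m$, by Theorem~\ref{theorem: bmg lower bound}) and enlarge them by a factor depending only on $p$ so as to beat the explicit constant $(4p)^{2/p}$, while keeping the $\Lambda(p)$ norm (respectively the $B_m^*$ parameter) under control.

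For general $p>2$, Theorem~\ref{thm: bourgain lambda p} provides constants $c_p,B_p>0$ such that for every sufficiently large $N'$ there is a set $S\subset[N']$ with $\card{S}\geq c_p (N')^{2/p}$ and $\|S\|_{\Lambda(p)}\leq B_p$. The key point is that, because $p>2$, forming a union of $k$ disjoint translated copies of such a set multiplies its cardinality by $k^{1-2/p}\to\infty$, whereas by parts (i) and (ii) of the Lemma immediately preceding this corollary the $\Lambda(p)$ norm grows by at most a factor of $k$. Concretely, fix an integer $k=k(p)$ large enough that $c_p\,k^{1-2/p}\,2^{-2/p}\geq(4p)^{2/p}$, take $N$ large, and set $N':=\lfloor N/k\rfloor$, so that $N'$ is large and $N'\geq N/(2k)$. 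With $S\subset[N']$ as above, put $S_N:=\bigcup_{j=0}^{k-1}(jN'+S)$. The $k$ translates lie in the consecutive disjoint blocks $\{jN'+1,\dots,(j+1)N'\}\subset[N]$, hence $S_N\subset[N]$ and $\card{S_N}=k\,\card{S}\geq c_p\,k^{1-2/p}\,2^{-2/p}\,N^{2/p}\geq(4pN)^{2/p}$, while $\|S_N\|_{\Lambda(p)}\leq\sum_{j=0}^{k-1}\|jN'+S\|_{\Lambda(p)}=k\|S\|_{\Lambda(p)}\leq kB_p\lesssim_p 1$.

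For the special case $p=2m$ we instead use the freedom to enlarge the multiplicity parameter $g$ in Theorem~\ref{theorem: bmg lower bound}. That theorem gives, for all sufficiently large $g$ and $N$, a $B_m[g]$ set $A\subset[N]$ with $\card{A}\geq c_m(gN)^{1/m}$; any $B_m[g]$ set is $B_m^*[g\cdot m!]$, and Proposition~\ref{prop: bm implies lambda p} gives $\|A\|_{\Lambda(2m)}\leq(g\cdot m!)^{1/2m}$. Choosing $g=g(m)$ large enough that $c_m\,g^{1/m}\geq(8m)^{1/m}$ yields $\card{A}\geq(8m)^{1/m}N^{1/m}=(8mN)^{1/m}=(4pN)^{2/p}$; the set $A$ is $B_m^*[g']$ with $g':=g\cdot m!$ depending only on $m$, and $\|A\|_{\Lambda(2m)}\lesssim_m 1$. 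Taking $S_N:=A$ finishes this case. (Should one want an exact cardinality, pass to an arbitrary subset of the required size; both $\|\cdot\|_{\Lambda(p)}$ and the $B_m^*$ property only improve under this, directly from the definitions.)

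The only genuine issue is this constant-boosting, and the two cases handle it asymmetrically: for $p\notin 2\mathbb{N}$ there is no multiplicity parameter to exploit, which forces the union-of-translates argument; this costs us nothing here only because the target $\Lambda(p)$ bound is allowed to be any quantity $\lesssim_p 1$ rather than absolute or close to $1$. The one thing to watch is the order of quantifiers in the $p=2m$ case: $g$ must be fixed (as a function of $m$) before sending $N\to\infty$, which is legitimate since Theorem~\ref{theorem: bmg lower bound} applies to all sufficiently large $N$ once $g$ has been chosen.
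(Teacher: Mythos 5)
Your proposal is correct and follows essentially the same route as the paper: for general $p$ you glue $k(p)$ translated copies of Bourgain's set and control the $\Lambda(p)$ norm via the union and translation-invariance lemma, and for $p=2m$ you enlarge the multiplicity parameter $g$ in Theorem~\ref{theorem: bmg lower bound} exactly as the paper does (its choice is $g=8mc_m^{-m}$). The only blemish is a harmless relabeling: Theorem~\ref{theorem: bmg lower bound} produces $B_m^*[g]$ sets (by the definition of $F_{m,g}$), not $B_m[g]$ sets, but since a $B_m^*[g]$ set is in particular $B_m[g]$, your application of Proposition~\ref{prop: bm implies lambda p} and the conclusion stand unchanged.
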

\begin{proof} We consider the cases $p\notin 2\mathbb{N}$ and $p\in 2\mathbb{N}$ separately.

\medskip\noindent\underline{$p\in (2,\infty)\setminus 2\mathbb{N}$:}\;\; If $S_N$ is as in the statement of Theorem \ref{thm: bourgain lambda p}, there is a constant $a_p>0$ such that $\text{card}(S_N)\geq a_p N^{2/p}$ for all $N$ sufficiently large. Let $k$ be chosen large enough, depending only on $p$, such that $a_pk^{1-2/p}\geq (8p)^{2/p}$. This is possible since $p>2$. 

Let $q:=\lfloor N/k \rfloor$. By Theorem \ref{thm: bourgain lambda p} there exists $S_q\subset [q]$ of size $\card{S_q}\geq a_pq^{2/p}$ satisfying $$\|S_q\|_{\lamp}\lesssim_p 1.$$ For $j=1,\dots, k$ define $T_j:=(j-1)q+S_q$. By translation invariance, $$\|T_j\|_{\lamp}\leq\|S_q\|_{\lamp}\lesssim_p 1\quad\text{for each}\quad j=1,\dots,k.$$ For $S_N:=\bigcup_{j=1}^k T_j$ it follows from the union lemma that  $$\|S_N\|_{\lamp}\leq \sum_{j=1}^k\|T_j\|_{\lamp}\lesssim_p k\lesssim_p 1.$$ Now $S_N\subset [N]$ and $\card{S_N}\geq ka_p q^{2/p}\geq(4p N)^{2/p}$ by our choice of $k$.

\medskip\noindent\underline{$p\in 2\mathbb{N}$:}\;\; Let $p=2m$ for some integer $m\geq 2$. Let $c_m>0$ be chosen small enough, so that \eqref{eqn: F lower bound} holds with $g:=8mc_m^{-m}$. Thus, \begin{equation}\label{eqn: F lower bound 2}F_{m,g}(N)\geq (8mN)^{1/m}.\end{equation}
In other words, there exists a $B_m^*[g]$ set $S_N\subset [N]$ with $\text{card}(S_N)\geq (8mN)^{1/m}$, where $g=8mc_m^{-m}$. By Proposition \ref{prop: bm implies lambda p}, we have $$\|S_N\|_{\Lambda(2m)}\leq (g\cdot m!)^{1/2m}\lesssim_m 1.$$
\end{proof}
Let us now prove Lemma \ref{lemma: *}.
 \begin{proof}[Proof of Lemma \ref{lemma: *}]
        Define $N_p:=\lceil\frac{N^{p/2}}{4p}\rceil$, $d_p:=N^{-p/2}N_p$, and note that $d_p\leq\frac{1}{2p}$ for sufficiently large $N$.  By Corollary \ref{cor: lambda p}, there exists a set $P'(N;p)\subset [N_p-1]$ of size $N-2$ satisfying $$\|P'(N;p)\|_{\lamp}\lesssim_p 1.$$ Then $$P(N;p):=P'(N;p)\cup\big\{0,N_p\big\}$$ is a set of $N$ points satisfying $$\|P(N;p)\|_{\lamp}\lesssim_p 1.$$
        In case $p=2m$ for an integer $m\geq 2$, the same corollary says that $P'(N;2m)$ can be chosen to be a $B_m^*[g]$ set for $g=8mc_m^{-m}$. By Lemma \ref{lemma: sidon plus 1}, $$P(N;2m):=P'(N;2m)\cup\big\{0,N_{2m}\big\}$$ is a $B_m^*[g_m]$ set of size $N$, where $g_m:=m+m^2+8m(m-1)^2c_m^{-m}$. 
     
     We use $P(N;p)$ to construct the auxiliary family of intervals $$I(N;p):=\big\{I_{x,p}:x\in P(N;p)\big\},$$ where $$I_{x,p}:=\begin{cases}[0,d_p],\quad &x=0,\\
   [x-\frac{d_p}{2},x+\frac{d_p}{2}],\quad &0<x<N_p,\\
   
   [N_p-d_p,N_p], \quad &x=N_p.
   \end{cases}$$ We shall obtain $\I(N;p)$ from $I(N;p)$ by rescaling.  
   
   Let $Q$ be an interval of length $d_p^{-1}$. There exists $\psi_Q\in\mathcal{S}(\R)$ with the following properties \begin{itemize}
       \item $\mathrm{supp}(\psi_Q)\;\widehat\;\subset [-1/4,1/4]$;
       \item $\psi_Q(t)\gtrsim 1$ for all $t\in Q$;
       \item $\psi_Q(t)\lesssim w_Q(t)$ for all $t\in\R$.
   \end{itemize}
   Then for all $f_I\in\mathcal{S}(\R)$ with Fourier support in $I\in\I(N;p)$, the function \\$\sum_{I\in I(N;p)}f_I\cdot \psi_Q$ is Fourier supported in $P(N;p)+[-1/2,1/2]$, and so, by Proposition \ref{prop: laba wang}, 
    $$\|\sum_{I\in I(N;p)}f_I\|_{L^p(Q)}\lesssim\|\sum_{I\in I(N;p)}f_I\cdot \psi_Q\|_{L^p(Q)}\lesssim_p\|\sum_{I\in I(N;p)}f_I\cdot \psi_Q\|_{L^2(\R)}.$$  We can decouple the right-hand side by Plancherel $$\|\sum_{I\in I(N;p)}f_I\cdot \psi_Q\|_{L^2(\R)}=\big(\sum_{I\in I(N;p)}\|f_I\cdot \psi_Q\|^2_{L^2(\R)}\big)^{1/2}.$$ Combining the two estimates above yields $$\|\sum_{I\in I(N;p)}f_I\|_{L^p(Q)}\lesssim_p\big(\sum_{I\in I(N;p)}\|f_I\cdot \psi_Q\|_{L^2(\R)}^2\big)^{1/2}.$$
     By H\"older's inequality we have $$\int_{\R}|f_I|^2|\psi_Q|^2\leq\bigg(\int_{\R}|f_I|^p|\psi_Q|\bigg)^{2/p}\bigg(\int_{\R}|\psi_Q|^{2\frac{p-1}{p-2}}\bigg)^{1-2/p}.$$
     Now $$\bigg(\int_{\R}|\psi_Q|^{2\frac{p-1}{p-2}}\bigg)^{1-2/p}\lesssim_p 1,$$ and so using the fact that $\psi_Q\lesssim w_Q$, we have \begin{equation}\label{eqn: weighted decoupling est}\|\sum_{I\in I(N;p)}f_I\|_{L^p(Q)}\lesssim_p\big(\sum_{I\in I(N;p)}\|f_I\|^2_{L^p(w_Q)}\big)^{1/2}.\end{equation}

      Define the collection of rescaled intervals $$\I(N;p):=\big\{-1/2+N_p^{-1} I:I\in I(N;p)\big\}.$$ By construction, $\I(N;p)$ satisfies  conditions (i), (ii), and (iv). Property (iii) follows by rescaling the inequality \eqref{eqn: weighted decoupling est}.

      Suppose $p=2m$ for an integer $m\geq 2$. We show that (v) holds for the family $\I(N;2m)$. It is easy to see that for all $x_1,\dots,x_{2m}\in P(N;2m)$, the sumsets $I_{x_1,m}+\dots+I_{x_m,m}$ and $I_{x_{m+1},m}+\dots+I_{x_{2m},m}$ can intersect only if $$|(x_1+\dots+x_m)-(x_{m+1}+\dots+x_{2m})|\leq 1/2.$$ But since $(x_1+\dots+x_m)-(x_{m+1}+\dots+x_{2m})\in\mathbb{Z}$, this can only happen if $x_1+\dots +x_m=x_{m+1}+\dots +x_{2m}$. Since $P(N;2m)$ is a $B_m^*[g_m]$ set, it follows that $$\mathrm{card}\{(I_1,\dots,I_m)\in I(N;m)^m:y\in I_1+\dots+I_m\}\leq g_m,$$
   for all $y\in\mathbb{R}$. The same holds for the family $\I(N;2m)$ since it is obtained by rescaling $I(N;2m)$.

 \end{proof}

\subsection{Decoupling estimates for Cantor sets} \label{sec: cantor def}
Here we recall certain decoupling estimates for \textit{generalized Cantor sets}, which were recently established by Chang \textit{et al.} \cite{chang}.

Let $\I$ be a family of subintervals of $[-1/2,1/2]$ of length $\lambda$. We first recursively construct a sequence $(\I_k)_{k=1}^\infty$ where $\I_k$ is a family of subintervals of $[-1/2,1/2]$ of length $\lambda^k$. To do this, let $\I_1:=\I$ and suppose we have defined $\I_1,\dots,\I_k$ for some $k$. For $I\in\I_k$, let $L_I$ be the order-preserving affine transformation mapping $I$ onto $[-1/2,1/2]$. We define $\I_{k+1}$ as $$\I_{k+1}:=\{L_I^{-1}(J)\subset I:J\in\I,\; I\in\I_k\}.$$ We define the \textit{$k$th level} $C_k(\I)$ of $C(\I)$ to be $C_k(\I):=\bigcup_{J\in\I_k}J$. By construction, we have $$C_1(\I)\supset C_2(\I)\supset\dots \supset C_k(\I)\supset C_{k+1}(\I)\supset\dots.$$
Finally, we define $C(\I):=\bigcap_{k=1}^\infty C_k(\I).$
\begin{definition}[Generalized Cantor sets]
The set $C(\I)$ defined above is called the generalized Cantor set associated with $\I$.
\end{definition}
We also define $\I_k'$ to be the collection of all connected components of $C_{k-1}(\I)\setminus C_k(\I)$. These are the intervals that have been discarded precisely at the $k$th iteration above. Such an interval will be called a \textit{removed interval}. On the other hand, any interval in the family $\bigcup_{k=1}^\infty \I_k$ shall be called a \textit{parent interval}. 

\begin{remark}
   There are some anomalies with the definition of a generalized Cantor set in \cite{chang}; however, the intended definition is equivalent to the one stated above.
\end{remark}
Before stating the results on decoupling, we introduce some notation. Let $C=C(\I)$ be a generalized Cantor set, and $\I_k$ be the family of intervals forming its $k$th level $C_k=C_k(\I)$. Define $N(k):=\card{\I_k}$ to be the number of intervals forming $C_k$. For $0\leq j\leq k$, and every $I\in\I_j$, we define $\I_k(I):=\{J\in\I_k:J\subset I\}$. We use $\text{dim}(C)$ to denote the Minkowski dimension of $C$.
For an interval $I\in\I_k$  (or more generally any $I\subset[-1/2,1/2]$), we define the $|I|\times |I|^2$ parallelogram $$\theta_I:=\{\xi\in\R^2:\xi_1\in I,\;|\xi_2-2c_I(\xi_1-c_I)-c_I^2|\leq|I|^2/2\},$$ where $c_I$ denotes the centre of $I$. The parallelogram $\theta_I$ is comparable to a $|I|^2$-neighbourhood of the piece of the parabola $(t,t^2)$ above the interval $I$. 
We denote by $D_{q,k}(C)$ the infimum of all constants $D$, such that for all $f_{\theta_J}\in\mathcal{S}(\R^2)$  with Fourier support in the parallelogram $\theta_J$, we have
\begin{equation*}
    \label{eqn: decoupling constant}
    \|\sum_{J\in\I_k}f_{\theta_J}\|_{L^q(\R^2)}\leq D\big(\sum_{J\in\I_k}\|f_{\theta_J}\|_{L^q(\R^2)}^2\big)^{1/2}.
\end{equation*}
The following theorem from \cite{chang} compares the two-dimensional $\ell^2 L^{3q}$ decoupling constant with the one-dimensional $\ell^2 L^q$ decoupling constant for $C$. 
\begin{theorem}[{\cite[Theorem 1.1]{chang}}]\label{thm: chang}
For $q\geq 2$, let $\nu_q(C)$ be the smallest exponent such that for all $\epsilon>0$ we have
\begin{equation*}
    \|\sum_{J\in\I_k}f_J\|_{L^q(\R)}\lesssim_{q,\epsilon,\mathrm{dim}(C),N(1)}N(k)^{\nu_q(C)+\epsilon}\big(\sum_{J\in\I_k}\|f_J\|_{L^q(\R)}^2\big)^{1/2},
\end{equation*}
for all $k\geq 0$ and all $f_J\in\mathcal{S}(\R)$ with Fourier support in the interval $J$. Then for all $\epsilon>0$ we have $$D_{3q,k}(C)\lesssim_{q,\epsilon,\mathrm{dim}(C),N(1)}N(k)^{\nu_q(C)+\epsilon}.$$
\end{theorem}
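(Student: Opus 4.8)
The plan is to prove a single-step recursion that upgrades the two-dimensional decoupling constant for $C_k$ into the one-dimensional decoupling constant for $C_k$ times the two-dimensional constant for $C_1$, and then iterate. The key structural fact is self-similarity: for each $I \in \I_1$, the affine map $L_I$ carries $C_{k}(\I) \cap I$ (suitably rescaled) onto $C_{k-1}(\I)$, and this affine map acts on the frequency side in a way that respects the parabolic parallelograms $\theta_J$ up to an affine change of variables (an affine rescaling of the parabola is again a piece of a parabola). So I would first record the basic change-of-variables lemma: if $\Lambda$ is an affine map adapted to $I \in \I_1$, then $\|\sum_{J} f_{\theta_J}\|_{L^q(\R^2)}$ is comparable, after composing with $\Lambda$, to the analogous quantity for the rescaled caps, so the two-dimensional decoupling constant is affine-invariant in the appropriate sense. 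This reduces everything to a clean recursive inequality among the constants $D_{3q,k}(C)$.

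Next I would run the bootstrapping argument. Write $\I_1 = \{I\}$ and for $J \in \I_k$ let $I(J) \in \I_1$ be the unique level-one interval containing $J$. Group the caps $\{\theta_J : J \in \I_k\}$ according to $I(J)$. First decouple the $N(1)$ groups using the \emph{one-dimensional} $\ell^2 L^q$ decoupling into the intervals $I \in \I_1$: since the projections of the $\theta_J$ onto the $\xi_1$-axis lie in the intervals of $\I_1$, and the full sum $\sum_J f_{\theta_J}$ has $\xi_1$-frequency support adapted to $\I_1$, one applies the one-dimensional estimate (lifted trivially to two dimensions by freezing $\xi_2$) to pass from $\|\sum_{J\in\I_k} f_{\theta_J}\|_{L^q(\R^2)}$ — wait, this is the step where the exponent jump from $q$ to $3q$ enters, so more care is needed; the correct route is the one in \cite{chang}: apply two-dimensional flat/parabolic decoupling at the coarsest scale to reduce to single level-one blocks, then rescale each block by $L_{I}$ to return to $C_{k-1}(\I)$ at exponent $3q$, and finally interleave with the one-dimensional $\ell^2 L^q$ decoupling to account for the branching at the first level. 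Carrying this out gives a recursion of the shape
\begin{equation*}
D_{3q,k}(C) \lesssim_{q,\epsilon,\mathrm{dim}(C),N(1)} N(1)^{\nu_q(C)+\epsilon}\, D_{3q,k-1}(C),
\end{equation*}
up to harmless constants, with the base case $D_{3q,0}(C) = 1$.

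Iterating this recursion $k$ times yields $D_{3q,k}(C) \lesssim_{q,\epsilon,\mathrm{dim}(C),N(1)} \big(N(1)^{\nu_q(C)+\epsilon}\big)^{k}$, and since $N(k) = N(1)^k$ by the self-similar construction, the right-hand side is exactly $N(k)^{\nu_q(C)+\epsilon}$ after relabelling $\epsilon$. The accumulated implicit constant is a geometric-type product of the per-step constants, each of which depends only on $q,\epsilon,\mathrm{dim}(C),N(1)$; to keep this under control one absorbs the polynomial-in-$k$ loss from $k$ applications into the $N(k)^\epsilon$ factor, using $k = \log N(k)/\log N(1)$.

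The main obstacle is the first, exponent-raising step: getting from an $\ell^2 L^q$ bound in one dimension to an $\ell^2 L^{3q}$ bound in two dimensions is not a formal consequence of Fubini, and this is precisely the content of \cite{chang}. Since the statement as quoted is \cite[Theorem 1.1]{chang}, I would not reprove that mechanism from scratch but would instead cite it; the role of this proof sketch is to explain why the self-similar/affine-rescaling structure lets one reduce to the single-scale inequality that \cite{chang} establishes, and to verify that the bookkeeping of constants over the $k$-fold iteration produces exactly the claimed power $N(k)^{\nu_q(C)+\epsilon}$ with dependence only on $q,\epsilon,\mathrm{dim}(C),N(1)$.
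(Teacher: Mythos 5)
The paper never proves this statement: it is imported verbatim as \cite[Theorem 1.1]{chang} and used as a black box, with only the remark that it generalizes Bourgain--Demeter \cite{bourgain--demeter} and is proved by arguments in the spirit of \cite{shortdecoupling,wooley}. So to the extent that your proposal ends by citing \cite{chang}, it coincides with the paper's treatment. The problem is that the scaffolding you build around the citation is presented as a reduction, and that reduction does not work, so the sketch should not be read as a proof outline.

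Two concrete gaps. First, the bookkeeping: self-similarity plus parabolic rescaling gives at best a submultiplicative inequality of the shape $D_{3q,k}(C)\le A\,D_{3q,1}(C)\,D_{3q,k-1}(C)$, where $A=A(q)>1$ is the unavoidable constant from comparing affinely rescaled caps with genuine caps $\theta_J$. Iterating produces $A^{k}$, and since $k=\log N(k)/\log N(1)$ this is $N(k)^{\log A/\log N(1)}$ --- a fixed positive power of $N(k)$ for fixed $N(1)$, not a ``polynomial-in-$k$ loss'' absorbable into $N(k)^{\epsilon}$. (This is precisely why naive induction on scales does not prove Bourgain--Demeter.) The paper tolerates per-step constants only in the one-dimensional Proposition \ref{prop: 1d decoupling}, where $N$ is afterwards chosen large depending on $\epsilon$; Theorem \ref{thm: chang} as stated must hold for every fixed $N(1)$ with a constant uniform in $k$, so this loss is fatal. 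Second, the ``single-scale inequality'' you propose to cite is not an auxiliary lemma of \cite{chang}: the bound $D_{3q,1}(C)\lesssim N(1)^{\nu_q(C)+\epsilon}$ is essentially the theorem itself --- already at $q=2$, where $\nu_2(C)=0$, it contains $\ell^2L^6$ decoupling for caps on the parabola, which is proved (in \cite{bourgain--demeter} and in \cite{chang}) by a genuine multi-scale induction that uses the one-dimensional hypothesis at all levels simultaneously, not by iterating a level-one estimate. As written, the step you defer to the citation is the whole theorem, and the recursion wrapped around it is circular and quantitatively incorrect; either cite the result outright, as the paper does, or reproduce the actual multi-scale argument of \cite{chang}.
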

Theorem \ref{thm: chang} is a generalization of the Bourgain--Demeter \cite{bourgain--demeter} decoupling estimates for the parabola, and is proved using arguments similar to those from \cite{shortdecoupling,wooley}. 
\section{Constructing the domain}\label{sec: construction} 
We use Lemma \ref{lemma: *} to construct the domains for our main results. Let $\epsilon$ be a positive number, which we shall keep fixed from now on. 

\subsection{Construction}\label{sec: construction subsection}  Let $p>2$ and $N\geq 1$ be sufficiently large (to be specified later). Recall the family $\I(N;p)$ from Lemma \ref{lemma: *}. Let $\I^p:=\I(N;p)$ and let $C^p:=C(\I^p)$ be the generalized Cantor set associated with it. Let $C^p_k:=C_k(\I^p)$ be the $k$th level of $C^p$, which is formed by the family of intervals $\I^p_k$ of length $N^{-pk/2}$. Since the intervals in $\I(N;p)$ are separated by $(p/4)N^{-p/2}$, the $k$th generation of removed intervals have length at least $(p/4)N^{-pk/2}$. Note that the condition $I^-,I^+\in\I(N;p)$ of Lemma \ref{lemma: *} implies that $C^p$ contains the endpoints of every parent interval. We define our domains as $$\Omega_p:=\text{int}(\Omega'),\quad\text{with}\quad\Omega':=\text{conv}\{(t,t^2-1/8):t\in C^p\},$$ where we write $\text{conv}(A)$ to denote the convex hull of the set $A$, and  $\text{int}(A)$ to denote the interior of the set $A$.

Let us decompose the boundary as $\partial\Omega_p=\mathfrak{G}_1\cup\mathfrak{G}_2$, where $\mathfrak{G}_1:=\{\xi\in\partial\Omega_p:\xi_2<1/8\}$, and $\mathfrak{G}_2:=\{\xi\in\partial\Omega_p:\xi_2= 1/8\}$ It is clear that $\mathfrak{G}_2$ is a line segment, and that $\mathfrak{G}_1$ can be graph-parameterised over the interval $[-1/2,1/2]$, so that there exists a function $\gamma:[-1/2,1/2]\to\R$ such that $$\mathfrak{G}_1=\{(t,\gamma(t)):|t|\leq 1/2\}.$$
By the convexity of $\Omega_p$, it follows that $\gamma$ is a convex function.

When the context is clear, we shall often suppress the dependence on $p$ of the sets defined above. For instance, we may denote $C^p$ by simply $C$, $\I^p$ by simply $\I$ etc.
\subsection{Dimension} \label{sec: dimension and additive energy}
We are going to show that $\kappa_{\Omega_p}=\frac{1}{p}$.
Fix $\delta\in(0,1/2)$ and define $K(\delta)$ as $$K(\delta):=\min\{k\in\mathbb{N}:|I|<\delta^{1/2}\quad\text{for all}\quad I\in\I_k\}.$$
Since the intervals in $\I_k$ all have length $N^{-pk/2}$, we have \begin{equation}\label{eqn: N K(delta)}
N^{K(\delta)-1}\leq\delta^{-1/p}< N^{K(\delta)},\end{equation}
so that \begin{equation}
    \label{eqn: K(delta) bound}
\frac{1}{p}\frac{\log{\delta^{-1}}}{\log N}<  K(\delta)\leq 1+\frac{1}{p}\frac{\log{\delta^{-1}}}{\log N}.
\end{equation}
\begin{claim}\label{claim: cap estimate}
Let $N(\Omega_{p},\delta)$ be the minimum number of $\delta$-caps required to cover $\partial\Omega$. Then $$N(\Omega_{p},\delta)\approx_{m,N} N^{K(\delta)}.$$   
\end{claim}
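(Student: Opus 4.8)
The plan is to establish the two inequalities $N(\Omega_p,\delta)\lesssim_{p,N}N^{K(\delta)}$ and $N(\Omega_p,\delta)\gtrsim_{p,N}N^{K(\delta)}$ separately, exploiting the self-similar structure of $C^p$ and the fact that $\mathfrak{G}_1$ is a convex graph lying within an $O(\delta)$-neighbourhood of the parabola over $C^p_{K(\delta)}$. The guiding heuristic is that at scale $\delta$, the boundary $\mathfrak{G}_1$ ``looks like'' the parabola above the $N^{K(\delta)}$ intervals of $\I_{K(\delta)}$ (each of length $\approx\delta^{1/2}$), together with the chords spanning the removed gaps; the flat part $\mathfrak{G}_2$ contributes only $O(1)$ caps.

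\textbf{Upper bound.} First I would cover $\mathfrak{G}_1$ by caps indexed by $\I_{K(\delta)}$. For each $I\in\I_{K(\delta)}$, the portion of $\partial\Omega_p$ sitting above $I$ (including the pieces of chords of removed intervals that project into $I$) is contained in a cap $B(\ell_I,C_p\delta)$, where $\ell_I$ is the supporting line of $\Omega_p$ parallel to the chord of the parabola over $I$: indeed $\gamma$ is convex and pinned to $t^2-1/8$ at the endpoints of every parent interval (by property (iv) of Lemma~\ref{lemma: *}), so over an interval of length $\approx\delta^{1/2}$ the graph deviates from its chord by $O(\delta)$. Since $\card{\I_{K(\delta)}}=N^{K(\delta)}$ and finitely many extra caps cover $\mathfrak{G}_2$ and any transition regions, this gives $N(\Omega_p,\delta)\lesssim_p N^{K(\delta)}$. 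One should be slightly careful that the removed gaps at generation $k<K(\delta)$ have length $\gtrsim N^{-pk/2}\gg\delta^{1/2}$, so their chords are long and nearly flat; the piece of such a chord lying over a fixed $I\in\I_{K(\delta)}$ still fits in a single $O(\delta)$-cap, so the count is unaffected.

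\textbf{Lower bound.} For the reverse inequality I would produce $\gtrsim_{p,N} N^{K(\delta)}$ points on $\partial\Omega_p$ that are pairwise ``$\delta$-cap-separated'', meaning no supporting-line cap $B(\ell,\delta)$ can contain two of them; this forces $N(\Omega_p,\delta)$ to be at least that large. The natural candidates are the boundary points $(t_I,\gamma(t_I))$ with $t_I$ the left endpoint of each $I\in\I_{K(\delta)}$ — these genuinely lie on $\partial\Omega_p$ since $C^p$ contains the endpoints of every parent interval, hence they are vertices of the convex hull. Using the uniform separation of the intervals in $\I_k$ (successive intervals at level $k$ are $\gtrsim_p N^{-pk/2}$ apart) together with strict convexity of the parabola, consecutive such vertices differ in their tangent directions by $\gtrsim_p \delta^{1/2}\cdot\delta^{1/2}=\delta$ (change in slope $\approx$ length $\approx\delta^{1/2}$ over a horizontal gap $\approx\delta^{1/2}$), which I expect is exactly the threshold needed to guarantee a single $\delta$-cap meets at most $O_p(1)$ of them. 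Counting then yields $N(\Omega_p,\delta)\gtrsim_{p,N}N^{K(\delta)}$, and combined with \eqref{eqn: N K(delta)} and \eqref{eqn: K(delta) bound} this will also give $\kappa_{\Omega_p}=1/p$ in the following subsection.

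\textbf{Main obstacle.} I expect the delicate point to be the lower bound, specifically ruling out that a cleverly tilted supporting line $\ell$ could ``skim'' many of the chosen vertices at once because the boundary has long flat stretches (the removed gaps). One must check that a $\delta$-cap, being the intersection of $\partial\Omega_p$ with a $\delta$-slab about a supporting line, cannot run nearly parallel to a whole staircase of short arcs; this is where the quantitative separation in Lemma~\ref{lemma: *}(ii) and the convexity of $\gamma$ must be combined carefully, tracking the $N$-dependent constants. The upper bound is comparatively routine once one observes the $O(\delta)$ chord-deviation estimate over length-$\delta^{1/2}$ intervals.
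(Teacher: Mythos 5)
Your overall plan (an explicit near-optimal covering for the upper bound, plus a counting argument showing a single $\delta$-cap cannot absorb too many marked boundary features for the lower bound) is the same as the paper's, but the quantitative justifications in both halves are flawed, and the lower bound one is a genuine gap. You claim the $N^{K(\delta)}$ vertices $(l_I,\gamma(l_I))$, $I\in\I_{K(\delta)}$, are pairwise $\delta$-cap-separated because ``consecutive vertices differ in tangent direction by $\gtrsim\delta^{1/2}\cdot\delta^{1/2}=\delta$''. This uses the wrong separation scale: the intervals of $\I_{K(\delta)}$ have length $\lambda:=N^{-pK(\delta)/2}$, which by \eqref{eqn: N K(delta)} can be as small as $N^{-p/2}\delta^{1/2}$, and within one level-$(K(\delta)-1)$ interval (of length $N^{p/2}\lambda$, which can be comparable to $\delta^{1/2}$) there sit $N$ of your vertices. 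Concretely, take the tangent line to the parabola at one of your vertices; since the hull lies above the parabola this is a supporting line of $\Omega_p$, and any other vertex at horizontal distance $s$ from it lies at distance exactly $s^2/\sqrt{1+4c^2}$-type, i.e.\ $\lesssim s^2$, from that line. Hence the $\delta$-cap about this line contains every vertex within horizontal distance $\delta^{1/2}$, and when $\delta\approx N^{-p(K(\delta)-1)}$ (which happens for infinitely many $\delta$) this window swallows essentially all $\approx N$ vertices belonging to one level-$(K(\delta)-1)$ interval. So pairwise separation is false; what you actually need, and what your sketch never establishes, is a bound of the form ``each $\delta$-cap contains at most $O_{p,N}(1)$ of the marked objects'' (an $N$-dependent constant is permitted by the claim). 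The paper closes exactly this point by counting removed intervals rather than vertices: every removed interval of $\I(\delta)$ has length at least $(p/4)N^{-p/2}\delta^{1/2}$, so by the quadratic growth of $\dist(P_t,\ell)$ away from a supporting line, one $\delta$-cap can meet the boundary over at most $\lceil N^p\rceil$ consecutive removed intervals, giving $N(\Omega_p,\delta)\gtrsim N^{-p}N^{K(\delta)}$.

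The upper bound as written also has a hole: caps indexed only by $\I_{K(\delta)}$ do not cover the boundary over the removed gaps of earlier generations. Such a gap $G$ has length $\gg\delta^{1/2}$, no part of its chord ``projects into'' any $I\in\I_{K(\delta)}$ (the gaps are disjoint from the level-$K(\delta)$ intervals; the only chords lying over $I$ are those of gaps of generation $>K(\delta)$), and the far portion of the chord over $G$ is at distance $\approx|G|^2\gg\delta$ from the supporting line at a neighbouring $I$, so it is not captured by $B(\ell_I,C_p\delta)$. The repair is what the paper does: take the covering indexed by all of $\I(\delta)=\I_{K(\delta)}\sqcup\Ir$, giving each removed interval its own cap (its boundary portion is a line segment, hence inside the $\delta$-cap about the supporting line containing it); since $\card{\Ir}<N^{K(\delta)}$ the count is unchanged up to constants, and your chord-deviation estimate over the length-$<\delta^{1/2}$ intervals of $\I_{K(\delta)}$, using that $\gamma(t)=t^2-1/8$ at parent-interval endpoints, then correctly handles the remaining portion of $\mathfrak{G}_1$.
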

Recall that $\partial\Omega=\mathfrak{G}_1\cup\mathfrak{G}_2$, where $\mathfrak{G}_2$ is a line segment. Thus, $\mathfrak{G}_2$ can be covered by a single $\delta$-cap, and as such we can focus our attention on $\mathfrak G_1$ only. 
Let $S(\delta)$ be the collection of the endpoints of every interval in $\I_{K(\delta)}$, and let $\I(\delta)$ be the family of essentially disjoint intervals in the decomposition of $[-1/2,1/2]$ given by the partition $S(\delta)$. We partition $\I(\delta)$ as $\I(\delta)=\Ir\sqcup\I_{K(\delta)}$, where $\Ir$ is the family of all removed intervals contained in $\I(\delta)$.
The portion of $\mathfrak G_1$ over each $I\in\Ir$ is a line segment. Using the notation from \S\ref{sec: notation} we can write $$\gamma(t)=l_I^2+(r_I+l_I)(t-l_I),\quad\text{for all}\quad t\in I=[l_I,r_I].$$
Then there exists a unique supporting line $\ell_I$ to $\mathfrak G_1$ at the point $(c_I,\gamma(c_I))$ with slope $l_I+r_I$. Define the $\delta$-cap $B_I:=B(\ell_I,\delta)$, which clearly includes the aforementioned line segment. 
Now let $I\in\I_{K(\delta)}$, and fix a supporting line $\ell_I$ to $\mathfrak G_1$ at the point $(c_I,\gamma(c_I))$, and define the $\delta$-cap $B_I:=B(\ell_I,\delta)$. We note that this supporting line may not be unique. In order to prove Claim \ref{claim: cap estimate}, it suffices to prove the following.
\begin{claim}\label{claim: near optimal covering}
    The family $\mathfrak{B}_\delta:=\{B_I:I\in\I(\delta)\}$ is a covering of $\mathfrak G_1$. Moreover, this covering is near-optimal in the sense that $$\mathrm{card}(\mathfrak B_\delta)\approx_{p,N} N(\Omega_{p},\delta).$$
\end{claim}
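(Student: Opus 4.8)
The plan is to establish Claim~\ref{claim: near optimal covering} in two parts: first that $\mathfrak{B}_\delta$ covers $\mathfrak{G}_1$, and then that its cardinality is comparable to $N(\Omega_p,\delta)$ up to constants depending only on $p$ and $N$. For the covering assertion, I would argue that $\mathfrak{G}_1$ is the union of the pieces of the graph of $\gamma$ lying over the intervals $I \in \I(\delta)$, so it suffices to check that for each such $I$, the corresponding cap $B_I = B(\ell_I,\delta)$ contains the piece of $\mathfrak{G}_1$ over $I$. When $I \in \Ir$ is a removed interval this is immediate, since $\mathfrak{G}_1$ restricted to $I$ is exactly the line segment on the supporting line $\ell_I$. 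When $I \in \I_{K(\delta)}$, I would use that $I$ has length $N^{-pK(\delta)/2} < \delta^{1/2}$ by the definition of $K(\delta)$, together with the convexity of $\gamma$: the graph of $\gamma$ over an interval of length $<\delta^{1/2}$ cannot deviate from any supporting line at its midpoint by more than something on the order of $|I| \cdot (\text{variation of slope over } I)$. Since the slopes of $\mathfrak{G}_1$ are controlled (the parabola $t^2-1/8$ has slope varying by at most $2|I|$ over $I$, and $\gamma$ interpolates between such slopes), the vertical deviation is $O(|I|^2) = O(\delta)$, hence the piece lies within distance $\lesssim \delta$ of $\ell_I$; after adjusting the implicit constant (or shrinking to a sub-cap), this gives containment in a $\delta$-cap. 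I should be slightly careful that the definition of a $\delta$-cap uses distance exactly $<\delta$, so I may need to either absorb the constant by passing to caps at scale $C\delta$ (which only changes $N(\Omega_p,\delta)$ by a bounded factor) or sharpen the estimate.

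For the near-optimality assertion, the upper bound $N(\Omega_p,\delta) \leq \mathrm{card}(\mathfrak{B}_\delta)$ is just the fact that $\mathfrak{B}_\delta$ is one admissible covering. The content is the lower bound $\mathrm{card}(\mathfrak{B}_\delta) \lesssim_{p,N} N(\Omega_p,\delta)$. Here I would bound $\mathrm{card}(\mathfrak{B}_\delta) = \mathrm{card}(\I(\delta)) = \mathrm{card}(\Ir) + \mathrm{card}(\I_{K(\delta)})$, and then show each term is $\lesssim_{p,N} N^{K(\delta)}$, since $N(\Omega_p,\delta) \geq cN^{K(\delta)}$ will follow from a separated-points argument (any $\delta$-cap can contain only boundedly many of the points of $S(\delta)$, forcing $N(\Omega_p,\delta) \gtrsim \mathrm{card}(S(\delta)) \approx N^{K(\delta)}$). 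We have $\mathrm{card}(\I_{K(\delta)}) = N^{K(\delta)}$ exactly by construction. For the removed intervals, at the $j$th stage the number of newly removed intervals is $\mathrm{card}(\I^p_{j-1}) \cdot (\text{number of gaps in } \I(N;p)) \leq N^{j-1}\cdot N = N^{j}$, so $\mathrm{card}(\Ir) \leq \sum_{j=1}^{K(\delta)} N^{j} \lesssim N^{K(\delta)}$ since $N \geq 2$ makes this a geometric sum. Combining, $\mathrm{card}(\mathfrak{B}_\delta) \lesssim N^{K(\delta)} \lesssim_{p,N} N(\Omega_p,\delta)$.

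To close the loop with Claim~\ref{claim: cap estimate}, I would observe that the lower bound $N(\Omega_p,\delta) \gtrsim N^{K(\delta)}$ comes from the separation of the points of $S(\delta)$: consecutive endpoints of intervals in $\I_{K(\delta)}$ are separated by at least $(p/4)N^{-pK(\delta)/2} \geq (p/4)\delta^{1/2} \cdot N^{-p/2}$ (using \eqref{eqn: N K(delta)}), and a supporting line of the graph of a convex function can be within distance $\delta$ of the graph only on a $t$-interval of length $O(\delta^{1/2})$ wherever the curvature is bounded below — but since the parabola pieces have curvature comparable to $1$, each $\delta$-cap accounts for only $O_{p,N}(1)$ of the points of $S(\delta)$, whence $N(\Omega_p,\delta) \gtrsim_{p,N} \mathrm{card}(S(\delta)) \approx N^{K(\delta)}$. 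Together with the covering $\mathfrak{B}_\delta$ this pins down $N(\Omega_p,\delta) \approx_{p,N} N^{K(\delta)}$, and then \eqref{eqn: K(delta) bound} gives $\kappa_{\Omega_p} = \lim \frac{\log N^{K(\delta)}}{\log \delta^{-1}} = \frac{1}{p}$.

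The main obstacle I anticipate is the geometric estimate controlling how a convex graph sits relative to its supporting lines over intervals of length $\sim \delta^{1/2}$ — specifically, making rigorous that (a) over a $\delta^{1/2}$-length interval the graph stays within $\lesssim \delta$ of a midpoint supporting line (needed for the covering), and (b) no $\delta$-cap can capture too many of the separated points of $S(\delta)$ (needed for the matching lower bound). Both hinge on the fact that $\gamma$ is built from parabolic arcs glued along chords, so its one-sided derivatives lie between $l_I + r_I$ values of nearby intervals and the total slope variation over an interval $I$ is $O(|I|)$; I would want a clean lemma (perhaps extracted from \cite{SZ}) saying that for such $\gamma$ a $\delta$-cap restricted to $\mathfrak{G}_1$ projects onto a $t$-interval of length $\approx \delta^{1/2}$. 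The separation and curvature bounds are uniform in $k$ precisely because the construction self-similarly rescales $\I(N;p)$, so the constants depend on $p$ and $N$ but not on $\delta$.
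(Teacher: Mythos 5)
Your proposal is correct and, on the covering half, follows the paper's argument almost verbatim: removed intervals are handled trivially since $\mathfrak G_1$ there lies on $\ell_I$, and for $I\in\I_{K(\delta)}$ the paper makes your ``$O(|I|^2)$ deviation'' bound precise via the distance formula \eqref{eqn: trig dist identity}, computing $\dist(P_b,\ell_I)\leq\frac34(b-a)^2<\delta$ using $\gamma(b)=b^2$, $\gamma(c_I)\geq c_I^2$ and $\sigma\geq 2a$ — so your worry about the strict inequality $<\delta$ evaporates once the estimate is sharpened (your fallback of passing to $C\delta$-caps is unnecessary and would anyway require a doubling property for $N(\Omega_p,\cdot)$ and a reworking of the claim, since the later additive-energy argument uses genuine $\delta$-caps). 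Where you genuinely diverge is the lower bound $\card{\mathfrak B_\delta}\lesssim_{p,N}N(\Omega_p,\delta)$: you count the $\approx N^{K(\delta)}$ endpoints in $S(\delta)$, which are $\gtrsim_{p}N^{-p/2}\delta^{1/2}$-separated and lie on the true parabola, and argue that any single $\delta$-cap of an arbitrary covering can capture only $O_{p,N}(1)$ of them because a line stays $\delta$-close to a curvature-one parabola only over $t$-sets of total length $\lesssim\delta^{1/2}$; the paper instead groups $\lceil N^p\rceil$ consecutive removed intervals and uses \eqref{eqn: trig dist identity} applied to the supporting line of the first one to show the optimal covering must insert a cap in each such block, giving $\card{\mathfrak B_\delta}\lesssim N^p\,N(\Omega_p,\delta)$. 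Both arguments rest on the same quadratic-separation mechanism and yield the same conclusion; your version is slightly more geometric and self-contained (it is exactly the ``clean lemma'' you ask for: a $\delta$-cap meets the parabola over at most two intervals of length $O(\delta^{1/2})$), but it does require the observation — guaranteed by the condition $I^\pm\in\I(N;p)$ in Lemma \ref{lemma: *} and noted in \S\ref{sec: construction subsection} — that $S(\delta)\subset C^p$, so the points $P_t$, $t\in S(\delta)$, really are boundary points of the form $(t,t^2-1/8)$ that any admissible covering must cover.
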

Indeed this implies Claim \ref{claim: cap estimate}, since $\text{card}(\mathfrak B_\delta)=\card{\I(\delta)}\approx N^{K(\dl)}$. To prove the latter claim we follow an argument similar to the one in \cite{SZ}. 
\begin{proof}[Proof of Claim \ref{claim: near optimal covering}]
For $t\in[-1/2,1/2]$ let $P_t:=(t,\gamma(t))$. For $t_0\in[-1/2,1/2]$, let $\ell_0$ be a supporting line at a point $P_{t_0}$ on the boundary $\mathfrak G_1$. Then for all $t>t_0$, a simple trigonometric argument shows that 
\begin{equation}\label{eqn: trig dist identity}
   \dist(P_t,\ell_0)=\frac{1}{\sqrt{1+\sigma^2}}(\gamma(t)-\gamma(t_0)-\sigma(t-t_0)), 
\end{equation}
\begin{figure}\
    \centering
    \includegraphics[width=10cm]{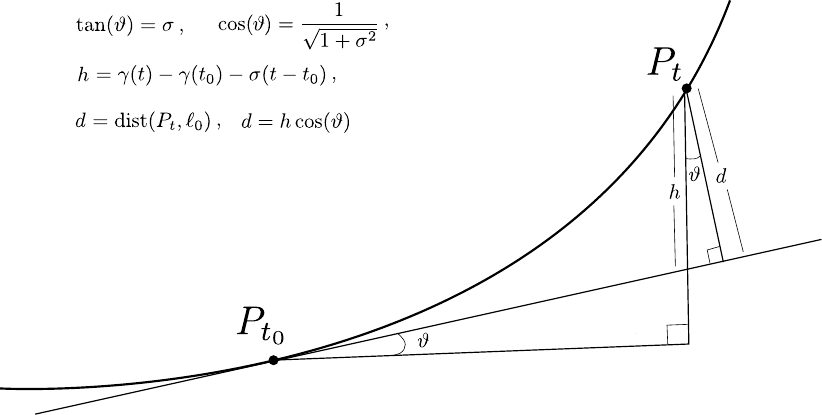}
    \caption{The distance formula \eqref{eqn: trig dist identity}.}
   \label{fig: trig distance identity}
\end{figure}
where $\sigma$ is the slope of $\ell_0$. For a proof see Figure \ref{fig: trig distance identity}. The convexity of $\gamma$ also implies that the distance function above is increasing in $t$. Let $\gamma_R'$ denote the right and $\gamma_L'$ denote the left derivative of $\gamma$. Then the right derivative of $\dist(P_t,\ell_0)$ with respect to $t$ is given by $$\frac{1}{\sqrt{1+\sigma^2}}(\gamma_R'(t)-\sigma).$$ Since $\sigma$ is the slope of a supporting line at $P_{t_0}$, we have $\gamma_L'(t_0)\leq\sigma\leq\gamma_R'(t_0)$. By the convexity of $\gamma$, we know that $\gamma_R'(t)\geq\gamma_R'(t_0)$ for all $t>t_0$. Hence for all such $t$, the right-hand derivative of $\dist(P_t,\ell_0)$ is non-negative, and thus $\dist(P_t,\ell_0)$ is non-decreasing. 
For each $I\in\Ir$, we observed that the $\delta$-cap $B_I$ covers the portion of the boundary above $I$, that is $I\subset \text{proj}_1(B_I)$, where $\text{proj}_1(A)$ denotes the projection of the set $A\subset\R^2$ onto the horizontal coordinate axis. Now let $I\in\I_{K(\delta)}$. If $I=[a,b]$ and $\sigma$ denotes the slope of the supporting line $\ell_I$, then by \eqref{eqn: trig dist identity} we have 
$$\dist(P_b,\ell_I)=\frac{1}{\sqrt{1+\sigma^2}}\bigg(\gamma(b)-\gamma\bigg(\frac{a+b}{2}\bigg)-\sigma\cdot\frac{b-a}{2}\bigg).$$
Since $b$ is an endpoint of a parent interval, we have $\gamma(b)=b^2$. Furthermore, we have $\gamma\big(\frac{a+b}{2}\big)\geq\big(\frac{a+b}{2}\big)^2$, and since there is a supporting line of slope $2a$ at $P_a$, by the convexity of the boundary it follows that $\sigma\geq 2a$. Therefore, $$\dist(P_b,\ell_I)\leq \frac{3}{4}(b-a)^2.$$ Since $|I|<\delta^{1/2}$ by the definition of $K(\delta)$, the monotonicity of the distance function implies that $$\dist(P_t,\ell_I)<\delta,\qquad\text{for all}\qquad c_I\leq t\leq b.$$ Similar arguments show that $$\dist(P_t,\ell_I)<\delta\qquad\text{for all}\qquad a\leq t\leq c_I.$$Thus for all $I\in\I_{K(\delta)}$, the $\delta$-cap $B_I$ covers the portion of the boundary over $I$, that is $I\subset \text{proj}_1(B_I)$. Since $\I(\delta)$ is a covering of the interval $[-1/2,1/2]$, it follows that $\{\text{proj}_1(B_I):I\in\I(\delta)\}$ is also a covering of $[-1/2,1/2]$. Therefore, $\mathfrak{B}_{\delta}$ is a covering of $\mathfrak G_1$ by $\delta$-caps. This establishes the first part of the claim. 

Since $\mathfrak B_\delta$ is a covering, we have 
$N(\Omega,\delta)\leq\text{card}(\mathfrak B_\delta).$
For the reverse inequality, let $\mathfrak B_\delta'$ be a covering of $\mathfrak G_1$ by $N(\Omega,\delta)$ number of $\delta$-caps. The number of removed intervals in $\I(\delta)$ is $N^{K(\delta)}-1$, and the lengths of these intervals are at least $(p/4)N^{-pK(\delta)/2}$. By definition of $K(\delta)$, we have $(p/4)N^{-(p/2)K(\delta)}\geq (p/4)N^{-p/2}\delta^{1/2}$. For each $I\in\Ir$, the supporting line $\ell_I$ has slope $l_I+r_I$. Now consider $\lceil N^{p}\rceil$ removed intervals $I_1,\dots,I_{\lceil N^{p}\rceil}\in\Ir$ with $c_{I_1}\leq\dots\leq c_{I_{\lceil N^{p}\rceil}}.$ By the distance formula \eqref{eqn: trig dist identity}, we get $$\dist(l_{2},\ell_1)=\frac{1}{\sqrt{1+(l_1+r_1)^2}}(l_2-r_1)(l_2-l_1),$$
where $\ell_1:=\ell_{I_1}, l_1:=l_{I_1},r_1:=r_{I_1},$ and $l_2:=l_{I_{\lceil N^{p}\rceil}}$. Then, $$\dist(l_2,\ell_1)\geq\frac{1}{\sqrt{2}}(l_2-r_1)^2.$$
Since each $I_j$ has length at least $(p/4)N^{-p/2}\delta^{1/2}$, it follows that $$\dist(l_2,\ell_1)>\delta.$$
Thus there exists at least one $\delta$-cap from $\mathfrak B_\delta'$ between $\lceil N^{p}\rceil$ consecutive $\delta$-caps  $B_I\in\mathfrak B_\delta$, for $I\in\I(\delta)\setminus\I_{K(\delta)}$. Hence, $\text{card}(\mathfrak B_\delta)\lesssim N^{p}\text{card}(\mathfrak B_\delta')$, which proves the claim. \end{proof}
Therefore, $N(\Omega,\delta)\approx_{p,N}N^{K(\delta)}$ and so by \eqref{eqn: N K(delta)}, we find that 
 \begin{equation}\label{eqn: dimension of omega}
 \kappa_{\Omega_p}=\frac{1}{p}.\end{equation}
 \begin{remark}\label{rmk: lower dimensions}
    We can modify the argument so that $\kappa_{\Omega_{2m}}$ is equal to any prescribed value $\kappa\in\big(\frac{1}{2m+2},\frac{1}{2m}]$, as stated in Theorem \ref{thm: main}. Observe that if \eqref{eqn: F lower bound 2} holds then $$F_{m,g}(N)\geq (4mN)^b$$ holds for all $0<b<\frac{1}{m}$. Thus, if we choose the $B_m^*[g]$ set in the proof of Lemma \ref{lemma: *} sub-optimally, the construction above will produce a domain $\Omega_{2m}$ with similar additive structure such that $\kappa_{\Omega_{2m}}=\kappa$. 
\end{remark}
\subsection{Additive energy}\label{sec: additive energy}
In the special case $p=2m\in 2\mathbb{N}$, we show that $\Omega_{2m}$ has the additional property $\mathcal{E}_{m,\Omega_{2m}}<m\epsilon$. 
If $\I_k'$ is as defined in \S\ref{sec: cantor def} for $k\geq 1$, and $\I_0':=\I_{K(\delta)}$, then $\I(\delta)$ can be partitioned as 
\begin{equation}\label{eqn: partition}\I(\delta)=\I_0'\cup\I_1'\cup\I_2'\cup\dots\cup\I_{K(\delta)}'.\end{equation}

Correspondingly, we get a partition of $\mathfrak{B}_\delta$ as
\begin{equation}\label{eqn: B_delta partition}\mathfrak{B}_\delta=\mathfrak B_{\delta,0}'\cup\mathfrak{B}'_{\delta,1}\cup\mathfrak{B}_{\delta,2}'\cup\dots\cup\mathfrak{B}_{\delta,K(\delta)}',\end{equation} 
where $\mathfrak{B}_{\delta,k}':=\{B_I:I\in\I_k'\}$ for $0\leq k\leq K(\delta)$.  

\begin{claim}\label{claim: claim 1}
     For all $0\leq k\leq K(\delta)$, and for every $\xi\in\R^2$, there exist at most $N^m\cdot g_m^{K(\delta)}$ tuples $(B_{1,k},\dots,B_{m,k})\in\mathfrak{B}_{\delta,k}'^m$ satisfying $$\xi\in B_{1,k}+\dots+B_{m,k}.$$ 
\end{claim}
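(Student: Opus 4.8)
The plan is to reduce this two–dimensional overlap count to a one–dimensional one (by projecting onto the first coordinate) and then to exploit the self–similarity of $C^p$ (here $p=2m$) together with the fact that $\I^p=\I(N;2m)$ is a rescaled integer $B_m^*[g_m]$ set.

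\emph{Reduction to a $1$D count.} Write $B_{j,k}=B_{I_j}$ with $I_j\in\I_k'$. If $\xi\in B_{1,k}+\dots+B_{m,k}$ then $\xi_1\in\hat I_1+\dots+\hat I_m$, where $\hat I_j:=\mathrm{proj}_1(B_{I_j})$. Each $B_{I_j}$ is a connected arc of $\partial\Omega_p$, so $\hat I_j$ is an interval, and $I_j\subseteq\hat I_j$ by the proof of Claim \ref{claim: near optimal covering}. Estimating, via \eqref{eqn: trig dist identity} and the convexity of $\gamma$, how far $B_{I_j}$ spreads past $I_j$ (as in that same proof), and using that every interval of $\I_k'$ has length $\gtrsim_m N^{-mk}$ while $\delta\le N^{-2m(K(\delta)-1)}$, one gets $|\hat I_j|\le C_{m,N}N^{-mk}$ with $C_{m,N}$ independent of $k,\delta,\xi$. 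Since $c_{I_j}\in I_j\subseteq\hat I_j$, this forces $\sum_{j}c_{I_j}\in\xi_1+[-mC_{m,N}N^{-mk},\,mC_{m,N}N^{-mk}]$; covering that interval by $O(C_{m,N})$ intervals of length $mN^{-mk}$, the claim reduces to the estimate $T(k)\le g_m^{\,k-1}N^m$ for $1\le k\le K(\delta)$, where
$$T(k):=\sup_{W\subset\R,\ |W|\le mN^{-mk}}\ \mathrm{card}\{(I_1,\dots,I_m)\in(\I_k')^m:\textstyle\sum_{j}c_{I_j}\in W\},$$
together with the analogous bound $\le g_m^{K(\delta)}$ for the same quantity defined with $\I_{K(\delta)}$ in place of $\I_k'$ (the case $k=0$).

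\emph{Inductive estimate of $T(k)$.} Let $A_1(I)\in\I$ be the level-one interval containing $I\in\I_k'$, so $L_{A_1(I)}(I)\in\I_{k-1}'$ and $c_I=c_{A_1(I)}+N^{-m}c_{L_{A_1(I)}(I)}$; iterating gives $c_I=\sum_{i=1}^{k-1}N^{-(i-1)m}c_{A_i}+N^{-(k-1)m}c_G$ with $A_i\in\I$ and $G$ a gap of $\I$. The key point: since $\I$ arises from a subset of $\mathbb Z$ by rescaling by a factor $\gtrsim_m N^{-m}$, the quantity $\sum_{j}c_{A_1(I_j)}$ lies on a grid whose mesh, for $N$ large, exceeds the total perturbation $\le mN^{-mk}+mN^{-m}\le 3mN^{-m}$ produced by the lower-order digits and by $W$. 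Hence $\sum_{j}c_{I_j}\in W$ pins $\sum_{j}c_{A_1(I_j)}$ to a single value $y$, and since $y$ lies in the sumset $A_1(I_1)+\dots+A_1(I_m)$, property (v) of Lemma \ref{lemma: *} leaves at most $g_m$ choices of the $m$-tuple $(A_1(I_1),\dots,A_1(I_m))$. Once that tuple is fixed, applying $L_{A_1(I_j)}$ in each coordinate identifies the remaining configurations with $(\I_{k-1}')^m$ subject to $\sum_{j}c_{L_{A_1(I_j)}(I_j)}$ lying in an interval of length $\le mN^{-m(k-1)}$ --- exactly the problem defining $T(k-1)$. So $T(k)\le g_m\,T(k-1)$ for $k\ge2$, while $T(1)\le(\mathrm{card}\{\text{gaps of }\I\})^m<N^m$ trivially; thus $T(k)<g_m^{\,k-1}N^m\le g_m^{K(\delta)}N^m$. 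The same induction for $\I_{K(\delta)}$ (now every digit is an interval, and the level-zero problem admits a single tuple) gives the bound $g_m^{K(\delta)}$ when $k=0$. Together with the reduction this proves the claim, up to the $(m,N)$-dependent constant from the covering step, which is harmless since it does not affect the additive-energy exponent computed afterwards.

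\emph{The main obstacle} is the cap geometry: one must both establish $|\hat I|\lesssim_{m,N}N^{-mk}$ and --- more delicately --- arrange that at each of the $\sim K(\delta)$ inductive stages the perturbation of $\sum_{j}c_{I_j}$ stays strictly below the grid mesh $\gtrsim_m N^{-m}$, so that the induction costs a factor of exactly $g_m$, and not $C_{m,N}g_m$, per stage. It is here that both the hypothesis that $\I(N;2m)$ descends from an integer set and the freedom to take $N$ sufficiently large are used.
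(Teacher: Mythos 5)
Your proposal follows, at its core, the same strategy as the paper: project onto the first coordinate, feed the $B_m^*$ structure of the level-one family into Lemma \ref{lemma: *}(v), and iterate across the $K(\delta)$ generations using the exact self-similarity of the Cantor construction, paying a single factor $N^m$ for the position of a removed interval inside its parent. The paper packages the iteration as Claim \ref{claim: claim 2}: it counts tuples of \emph{parent} intervals with $\xi_1\in I_1+\dots+I_m$ exactly, using that sumset membership is inherited upward (if $I_r'\subset I_r$ then $I_1'+\dots+I_m'\subset I_1+\dots+I_m$) and rescaling by the maps $L_{I_r}$; no centres, windows or mesh comparisons enter, and the bound $N^m\cdot g_m^{K(\delta)}$ comes out with no extra constant. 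Your centre-sum/window bookkeeping reproduces the same induction but costs you a multiplicative constant $C_{m,N}$ from the covering step, so you prove the claim only up to that factor. You are right that this is harmless for the additive-energy exponent, but it is strictly weaker than the statement as quoted. On the other hand, your explicit treatment of the spillover $\mathrm{proj}_1(B_I)\supsetneq I$ engages with a point the paper passes over rather quickly (its justification ``$I\subset\mathrm{proj}_1(B_I)$'' gives the inclusion in the unhelpful direction for the implication $\xi\in B_{I_1}+\dots+B_{I_m}\Rightarrow\xi_1\in I_1+\dots+I_m$), and that care is precisely what generates your constant.

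One step of your argument is misstated and needs repair. The level-one centres do \emph{not} all lie on a lattice: the two endpoint intervals $I^\pm$ of $\I(N;2m)$ have centres displaced from $-1/2+N_p^{-1}\mathbb{Z}$ by $N^{-m}/2$, so the window condition does not pin $\sum_j c_{A_1(I_j)}$ to a single value $y$; it pins only its lattice part $-m/2+N_p^{-1}\sum_j x_j$. The fix is immediate but should be said: the total displacement of $\sum_j c_{A_1(I_j)}$ from the pinned lattice point is at most $m N^{-m}/2$, which is at most the half-length of the (closed) sumset $A_1(I_1)+\dots+A_1(I_m)$, so that lattice point itself lies in every such sumset arising from a counted tuple, and Lemma \ref{lemma: *}(v) applied to this single point yields the factor $g_m$ per stage. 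With that correction, and accepting the $(m,N)$-dependent constant, your induction $T(k)\le g_m T(k-1)$, $T(1)<N^m$, and the separate $g_m^{K(\delta)}$ bound for $k=0$ are sound; but the cleaner route is the paper's, which never leaves exact sumset membership and therefore needs neither the grid argument nor the covering loss.
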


Notice that each interval in $\I_k'$ is contained in some interval of $\I_{k-1}$, and each interval of $\I_{k-1}$ contains exactly $N-1$ interval of $\I_k'$. Thus, in order to prove Claim \ref{claim: claim 1}, we first prove the following. 

\begin{claim}
    \label{claim: claim 2}
   For all $k\geq 1$, and all $y\in\R$, we have $$\mathrm{card}\{(I_1,\dots,I_m)\in\I_k^m:y\in I_1+\dots+I_m\}\leq g_m^k.$$
\end{claim}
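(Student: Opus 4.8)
The plan is an induction on $k$, with the base case $k=1$ being precisely property~(v) of Lemma~\ref{lemma: *} (recall that $\I_1=\I=\I(N;2m)$, so the $k=1$ statement reads $\max_{y}\mathrm{card}\{(I_1,\dots,I_m)\in\I^m:y\in I_1+\dots+I_m\}\leq g_m$). For the inductive step I would exploit the self-similar structure of the Cantor construction. Every interval $I'\in\I_{k+1}$ equals $L_I^{-1}(J)$ for a \emph{unique} pair $(I,J)$ with $I\in\I_k$ the parent of $I'$ and $J=L_I(I')\in\I$. Given a tuple $(I_1',\dots,I_m')\in\I_{k+1}^m$ with $y\in I_1'+\dots+I_m'$, write $I_j'=L_{I_j}^{-1}(J_j)$; since $I_j'\subset I_j$ we get $y\in I_1'+\dots+I_m'\subset I_1+\dots+I_m$, so the parent tuple $(I_1,\dots,I_m)\in\I_k^m$ also contains $y$ in its sumset. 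By the inductive hypothesis there are at most $g_m^k$ such parent tuples.

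Next I would fix one parent tuple $(I_1,\dots,I_m)$ and bound the number of admissible $(J_1,\dots,J_m)$. The key point is that all intervals of $\I_k$ share a common length $\ell_k$ (namely $\ell_k=N^{-mk}$), so $L_{I_j}^{-1}(s)=c_{I_j}+\ell_k s$ and hence
$$L_{I_1}^{-1}(J_1)+\dots+L_{I_m}^{-1}(J_m)=\Big(\sum_{j=1}^m c_{I_j}\Big)+\ell_k\big(J_1+\dots+J_m\big).$$
Thus $y$ lies in this sumset precisely when $y':=\ell_k^{-1}\big(y-\sum_{j}c_{I_j}\big)$ lies in $J_1+\dots+J_m$, and property~(v) of Lemma~\ref{lemma: *} applied at the point $y'$ yields at most $g_m$ such tuples $(J_1,\dots,J_m)\in\I^m$. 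Since $(I_1',\dots,I_m')$ determines, and is determined by, the pair of tuples $\big((I_1,\dots,I_m),(J_1,\dots,J_m)\big)$, the count at level $k+1$ is at most $g_m^k\cdot g_m=g_m^{k+1}$, which closes the induction.

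The only delicate point is the rescaling identity displayed above: it relies on $\I_k$ consisting of intervals of a \emph{single} common length, so that the dilation $\ell_k$ factors out of the sumset uniformly across all $m$ parents and property~(v) can be reused verbatim at the translated-and-dilated point $y'$. This uniformity is exactly what the self-similar Cantor construction provides; everything else is bookkeeping about the injective parent--child correspondence, which is what turns a product of two counts into the stated bound.
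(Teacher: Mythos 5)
Your proof is correct and takes essentially the same route as the paper: induction on $k$ with base case Lemma~\ref{lemma: *}(v), and in the inductive step the parent--child correspondence combined with the uniform rescaling identity (all intervals of $\I_k$ have the same length, so the affine maps $L_{I_r}^{-1}$ factor out of the sumset), reducing the per-parent count to the $k=1$ case applied at the translated-and-dilated point $y'$. If anything, your bookkeeping is slightly cleaner, since the paper's displayed bounds contain a minor off-by-one slip in the exponents ($g_m^{k-1}$ where $g_m^{k}$ is intended), but the underlying argument is identical.
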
\begin{proof}[Proof of Claim \ref{claim: claim 2}]
We prove this by inducting on $k$. For $k=1$ the claim holds by Lemma \ref{lemma: *} (v), since $\I_1=\I(N;2m)$. Suppose the claim is true for some $k\geq 1$. Fix a point $y\in\R$. To count the number of tuples $\{(I_1',\dots,I_m')\in\I_{k+1}^m:y\in I_1'+\dots+I_m'\}$, we first notice that each $I_r'$ is contained in a unique $I_r\in\I_k$. By the induction hypothesis, $$\mathrm{card}\{(I_1,\dots,I_m)\in\I_k^m:y\in I_1+\dots+I_m\}\leq g_m^{k-1}.$$ Now we fix $(I_1,\dots,I_m)\in\I_k^m$ and show that $$\mathrm{card}\{(I_1',\dots,I_m')\in\I_{k+1}^m:y\in I_1'+\dots+I_m',\;I_r'\subset I_r\}\leq g_m.$$ To prove this, let $x_r$ be the centre of the interval $I_r\in\I_k$, and suppose $y\in I_1'+\dots+I_m'$, where $(I_1',\dots,I_m')\in\I_{k+1}^m$ and $I_r'\subset I_r$. This can happen if and only if $$y'\in L_{I_1}(I_1')+\dots+L_{I_m}(I_m')\quad\text{where}\quad y':=N^{mk}(y-x_1-\dots-x_m).$$
By the definition of $\I_{k+1}$, we have $L_{I_r}(I_r')\in\I_1$ for all $r\in[m]$.
Thus, $$I_r'=L_{I_r}^{-1}(J_r)\quad\text{for all}\quad 1\leq r\leq m,$$ for some $(J_1,\dots ,J_m)\in\I_1^m$ satisfying $y'\in J_1+\dots +J_m$. But by the $k=1$ case we know there exist at most $g_m$ such tuples $(J_1,\dots,J_m)$. Thus, $$\mathrm{card}\{(I_1,\dots,I_m)\in\I_k^m:y\in I_1+\dots+I_m\}\leq g_m^{k-1}\cdot g_m=g_m^k,$$ which concludes the proof.

\end{proof}
\begin{proof}[Proof of Claim \ref{claim: claim 1}]
Fix $\xi=(\xi_1,\xi_2)\in\R^2$ and $1\leq k\leq K(\delta)+1$. By Claim \ref{claim: claim 2}, there exist at most $g_m^{k-1}$ tuples $(I_1,\dots,I_m)\in\I_{k-1}^m$ satisfying $\xi_1\in I_1+\dots +I_m$. Each $I_r\in\I_{k-1}$ contains $N-1$ intervals of $\I_k'$. Thus, there exist no more than $N^m\cdot g_m^{k-1}$ tuples $(I_1,\dots,I_m)\in(\I_k')^m$ satisfying $\xi_1\in I_1+\dots+I_m$. Claim \ref{claim: claim 1} then follows from the fact $$\xi\in B_{I_1}+\dots+B_{I_m}\implies \xi_1\in I_1+\dots+I_m,$$ since we have seen that $I\subset \text{proj}_1(B_I)$ for all $I\in\I(\delta)$.
\end{proof}

In light of the above, \eqref{eqn: B_delta partition} is a partition of $\mathfrak{B}_\delta$ into subcollections of $\delta$-caps where each subcollection has the finite overlap property as stated in Claim \ref{claim: claim 1}. Therefore, using the definition from \S\ref{sec: background} we get \begin{equation}\label{eqn: Xi_m bound}\Xi_m(\Omega_{2m},\delta)\leq (K(\delta)+1)^{2m}\cdot N^mg_m^{K(\delta)}.\end{equation} Recall that $$\mathcal{E}_{m,\Omega_{2m}}=\limsup_{\delta\to 0^+}\frac{\log\Xi_m(\Omega_{2m},\delta)}{\log\delta^{-1}}.$$ 
Now $$\frac{\log\Xi_m(\Omega_{2m},\delta)}{\log\delta^{-1}}\leq 2m\frac{\log{(K(\delta)+1)}}{\log\delta^{-1}}+m\frac{\log N}{\log\delta^{-1}}+K(\delta)\frac{\log g_m}{\log\delta^{-1}}.$$
Since $N$ is chosen independent of $\delta$, we have $$\frac{\log N}{\log\delta^{-1}}\to 0\quad\text{as}\quad \delta\to 0^+.$$
By \eqref{eqn: K(delta) bound}, we have $$\limsup_{\delta\to 0^+}\frac{\log{(K(\delta)+1)}}{\log\delta^{-1}}=\limsup_{\delta\to 0^+}\frac{\log\log\delta^{-1}}{\log\delta^{-1}}=0.$$
Thus $$\mathcal{E}_{m,\Omega_{2m}}\leq \frac{1}{2m}\frac{\log g_m}{\log N},$$ where we have used \eqref{eqn: K(delta) bound} again. Depending on $m$ and $\epsilon$, let us choose $N$ large enough so that 
\begin{equation}
    \label{eqn: choice of N}
\frac{1}{2m}\frac{\log g_m}{\log N}\leq m\epsilon,
\end{equation}
which yields
\begin{equation}\label{eqn: energy of omega}\mathcal{E}_{m,\Omega_{2m}}\leq m\epsilon,\end{equation} as required. 

\begin{remark}
   If we carried out the same construction with $g=1$ instead, the dimension of $\Omega_{2m}$ would be slightly less than $\frac{1}{2m}$. By taking $g$ to be larger than $1$, we ensure that $\kappa_\Omega=\frac{1}{2m}$ is attainable. The trade off is that larger values of $g$ lead to higher additive energy. Thus, the role of $g$ is to move any dimensional loss to the energy loss. 
\end{remark}
\section{Decoupling estimates}\label{sec: decoupling}
We now look into decoupling estimates, which are crucial for getting improved range of $L^p$ bounds over Theorem \ref{thm: cladek}. For the remainder of this section, $f_J$ will denote an $\mathcal{S}(\R)$ function with Fourier support in the interval $J$; and $f_{\theta_J}$ will denote an $\mathcal{S}(\R^2)$ function with Fourier support in the parallelogram $\theta_J$. Let $\epsilon$ be as defined in the previous section. 
\subsection{One-dimensional decoupling} For a generalized Cantor set $C$, let $\nu_p(C)$ be the quantity defined in the statement of Theorem \ref{thm: chang}. For $p>2$ let $C^p$ be the generalized Cantor set constructed in \S \ref{sec: construction subsection}. In this subsection, we are going to prove the following decoupling estimate.
\begin{prop}[1d decoupling]
    \label{prop: 1d decoupling} 
    There exists a constant $d_p\geq 1$ (independent of $N$) such that for all $k\geq 1$ we have \begin{equation}
         \label{eqn: lambda p decoupling ver 1}
     \|\sum_{I\in\I_k}f_I\|_{L^p(\R)}\leq d_p^k\big(\sum_{I\in\I_k}\|f_I\|_{L^p(\R)}^2\big)^{1/2}.\end{equation}
Consequently, if $N$ is chosen sufficiently large, then $\nu_p(C^p)\leq (p/2)\epsilon$.     
\end{prop}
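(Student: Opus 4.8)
The goal is the one-dimensional $\ell^2 L^p$ decoupling estimate \eqref{eqn: lambda p decoupling ver 1} with a constant growing only exponentially in $k$, from which the bound $\nu_p(C^p) \le (p/2)\epsilon$ follows by taking $N$ large. The basic strategy is induction on the level $k$, where the key input at each step is the single-scale weighted decoupling inequality from Lemma \ref{lemma: *}(iii), rescaled to the appropriate Cantor-tree node. The self-similar structure of $C^p$ means the family $\I_{k}$ restricted to any parent interval $I \in \I_{k-1}$ is an affine copy of $\I_1 = \I(N;p)$, so the single-scale estimate applies uniformly after affine rescaling.

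\textbf{Key steps.} First I would record the base case: for $k=1$, \eqref{eqn: lambda p decoupling ver 1} with constant $d_p$ follows directly from Lemma \ref{lemma: *}(iii), after summing the local estimate over a partition of $\R$ into intervals $Q$ of length $N^{p/2}$ and using \eqref{eqn: sum of weights is bounded} to control the overlap of the weights $w_Q$; here one also uses that each $f_I$ with $I \in \I_1$ is Fourier-supported in an interval of length $N^{-p/2}$, so the uncertainty principle makes $\|f_I\|_{L^p(w_Q)}$ essentially constant on that scale and the weighted sum $\sum_Q \|f_I\|_{L^p(w_Q)}^2 \approx \|f_I\|_{L^p(\R)}^2$. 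Second, for the inductive step, given $f_I$ for $I \in \I_{k}$, group the intervals according to their parent $I' \in \I_{k-1}$: write $F_{I'} := \sum_{I \in \I_k,\, I \subset I'} f_I$. Applying the induction hypothesis at level $k-1$ to the functions $F_{I'}$ gives
\[
\Big\|\sum_{I \in \I_k} f_I\Big\|_{L^p(\R)} = \Big\|\sum_{I' \in \I_{k-1}} F_{I'}\Big\|_{L^p(\R)} \le d_p^{k-1}\Big(\sum_{I' \in \I_{k-1}} \|F_{I'}\|_{L^p(\R)}^2\Big)^{1/2}.
\]
Then, for each fixed $I'$, apply the rescaled single-scale estimate (the $k=1$ case transported by the affine map $L_{I'}$, which preserves $L^p$ decoupling constants) to bound $\|F_{I'}\|_{L^p(\R)} \le d_p \big(\sum_{I \in \I_k,\, I \subset I'}\|f_I\|_{L^p(\R)}^2\big)^{1/2}$. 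Substituting and using $\sum_{I'}\big(\sum_{I \subset I'}\|f_I\|^2\big) = \sum_{I \in \I_k}\|f_I\|^2$ closes the induction with constant $d_p^k$. Finally, to deduce the bound on $\nu_p(C^p)$: since $\card{\I_k} = N(k) = N^k$, we have $d_p^k = N(k)^{\log d_p / \log N}$, so $\nu_p(C^p) \le \log d_p/\log N$, and choosing $N$ large enough that $\log d_p/\log N \le (p/2)\epsilon$ gives the claim. (One must check $d_p$ is genuinely independent of $N$, which it is because the constant $c_p$ in Lemma \ref{lemma: *}(iii) does not depend on $N$.)

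\textbf{Main obstacle.} The delicate point is the passage from the \emph{local} weighted estimate in Lemma \ref{lemma: *}(iii) — which lives on a single interval $Q$ of length $N^{p/2}$ with weight $w_Q$ — to a \emph{global} $L^p(\R)$ estimate with no weights, done uniformly in the level. One needs to sum over a tiling $\mathcal Q$ of $\R$ by such intervals and argue that $\sum_{Q \in \mathcal Q}\|g\|_{L^p(w_Q)}^p \lesssim_p \|g\|_{L^p(\R)}^p$ for the relevant $g$, using the rapid decay of $w_Q$ and \eqref{eqn: sum of weights is bounded}; converting the resulting $\ell^p$ sum over $Q$ back into an $\ell^2$-decoupled right-hand side requires care so as not to lose a factor depending on the number of tiles (which would be catastrophic, being roughly $N^{p k/2}$). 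The standard fix is to keep everything on the Fourier-analytic side: since each $F_{I'}$ is Fourier-supported in $I'$ of length $N^{-p(k-1)/2}$, one works at the dual scale throughout and exploits that the decoupling inequality is insensitive to which dual cube $Q$ one localizes to, so the per-$Q$ constants are uniform and the global estimate follows with the same constant. I expect this bookkeeping — ensuring the weights telescope correctly through the induction and that no hidden $N$- or $k$-dependence creeps into the constant — to be the technical heart of the argument, with the rescaling/self-similarity and the final arithmetic being routine.
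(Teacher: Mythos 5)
Your proposal is correct and follows essentially the same route as the paper: the base case comes from Lemma \ref{lemma: *}(iii) summed over a tiling of $\R$ by intervals of length $N^{p/2}$ with the weight-overlap bound \eqref{eqn: sum of weights is bounded}, the induction step uses self-similarity (induction hypothesis on the parent sums, then the rescaled single-scale estimate on each parent), and $\nu_p(C^p)\le \log d_p/\log N\le (p/2)\epsilon$ for $N$ large since $N(k)=N^k$. The only step you leave slightly vague — converting the $\ell^p$-over-$Q$, $\ell^2$-over-$I$ expression into the unweighted $\ell^2 L^p(\R)$ right-hand side — is handled in the paper simply by Minkowski's inequality (valid as $p>2$) followed by $\sum_Q\|f_I\|_{L^p(w_Q)}^p\lesssim\|f_I\|_{L^p(\R)}^p$, exactly the bound you state.
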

\begin{proof}
      We prove \eqref{eqn: lambda p decoupling ver 1} by inducting on $k\geq 1$. First we use Lemma \ref{lemma: *} (iii) to obtain the global decoupling estimate $$\|\sum_{I\in\I(N;p)}f_I\|_{L^p(\R)}\leq d_p\big(\sum_{I\in\I(N;p)}\|f_I\|_{L^p(\R)}^2\big)^{1/2}.
$$
    Since $\I_1=\I(N;p)$, this will establish the base case for the induction.

    \medskip\noindent\underline{Base case:}
 Let $\mathcal{Q}$ be a covering of $\R$ by an essentially disjoint family of intervals of length $N^{p/2}$. By Lemma \ref{lemma: *} (iii), for each $Q\in\mathcal{Q}$ we have $$\|\sum_{I\in\I_1}f_I\|_{L^p(Q)}\leq c_p\big(\sum_{I\in\I_1}\|f_I\|^2_{L^p(w_Q)}\big)^{1/2}.$$ 
 Using this we get $$\|\sum_{I\in\I_1}f_I\|_{L^p(\R)}=\bigg(\sum_{Q\in\mathcal{Q}}\|\sum_{I\in\I_1}f_I\|_{L^p(Q)}^p\bigg)^{1/p}\leq c_p\bigg(\sum_{Q\in\mathcal{Q}}\big(\sum_{I\in\I_1}\|f_I\|_{L^p(w_Q)}^2\big)^{p/2}\bigg)^{1/p}.$$ Since $p>2$, by Minkowski's inequality we change the order of summation $$\|\sum_{I\in\I_1}f_I\|_{L^p(\R)}\leq c_p\bigg(\sum_{I\in\I_1}\big(\sum_{Q\in\mathcal{Q}}\|f_I\|_{L^p(w_Q)}^p\big)^{2/p}\bigg)^{1/2}.$$ The inner sum can be bounded as follows $$\sum_{Q\in\mathcal{Q}}\|f_I\|_{L^p(w_Q)}^p=\int_{\R}|f_I|^p\sum_{Q\in\mathcal{Q}}w_Q\lesssim\|f_I\|_{L^p(\R)}^p,$$ by \eqref{eqn: sum of weights is bounded}. Thus, there exists a constant $d_p\geq 1$ such that $$\|\sum_{I\in\I_1}f_I\|_{L^p(\R)}\leq d_p\big(\sum_{I\in\I_1}\|f_I\|_{L^p(\R)}^2\big)^{1/2},$$ establishing the $k=1$ case.

\medskip\noindent\underline{Induction hypothesis:} For the constant $d_p$ as above, assume that for some $k\geq 1$ we have $$\|\sum_{I\in\I_k}f_I\|_{L^p(\R)}\leq d_p^k\big(\sum_{I\in\I_k}\|f_I\|_{L^p(\R)}^2\big)^{1/2}.$$

 \medskip\noindent\underline{Induction step:} Combining the base case with a rescaling argument shows that $$\|f_I\|_{L^p(\R)}\leq d_p\big(\sum_{J\in\I_{k+1}(I)}\|f_J\|_{L^p(\R)}^2\big)^{1/2}\quad\text{for all $I\in\I_k$}.$$ Using this in the induction hypothesis yields $$\|\sum_{J\in\I_{k+1}}f_J\|_{L^p(\R)}\leq d_p^{k+1}\big(\sum_{J\in\I_{k+1}}\|f_J\|_{L^p(\R)}^2\big)^{1/2}.$$ This closes the induction argument and establishes \eqref{eqn: lambda p decoupling ver 1}. 
 
To prove $\nu_p(C^p)\leq (p/2)\epsilon$, we choose $N$ sufficiently large so that $$\frac{\log d_p}{\log N}\leq \frac{p}{2}\epsilon.$$ This concludes the proof. \end{proof}
\subsection{Two-dimensional decoupling}
By Proposition \ref{prop: 1d decoupling}, we have $\nu_p(C^p)\leq (p/2)\epsilon$. Then applying Theorem \ref{thm: chang} gives $D_{3p,k}(C^p)\lesssim_{p,\epsilon}N(k)^{\frac{3}{4}p\epsilon}$, which we record in the following proposition.
\begin{prop}[2d decoupling]
    \label{prop: 2d decoupling}
    For all integers $k\geq 1$ we have \begin{align*}
    \|\sum_{J\in\I_{k}}f_{\theta_J}\|_{L^{3p}(\R^2)}\lesssim_{\epsilon}N(k)^{\frac{3}{4}p\epsilon}\bigg(\sum_{J\in\I_{k}}\|f_{\theta_J}\|_{L^{3p}(\R^2)}^2\bigg)^{1/2}
    .\end{align*}
\end{prop}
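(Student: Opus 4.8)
The plan is to derive this statement as an essentially immediate consequence of Theorem~\ref{thm: chang}, fed with the one-dimensional decoupling bound from Proposition~\ref{prop: 1d decoupling}; no new estimate is needed, only a small adjustment of the $\epsilon$-budget.

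First I would note that $C^p = C(\I^p)$ is a generalized Cantor set in the sense of \S\ref{sec: cantor def}, generated by the family $\I^p = \I(N;p)$ of $N$ intervals, so that $N(1) = \card{\I^p} = N$ and, since its $N(k) = N^k$ level-$k$ intervals have length $N^{-pk/2}$, its Minkowski dimension equals $\frac{\log N}{\log N^{p/2}} = \frac{2}{p}$. The relevance of this is bookkeeping: once $N$ has been fixed as a function of $p$ and $\epsilon$ (as in the proof of Proposition~\ref{prop: 1d decoupling}), both $\dim(C^p)$ and $N(1)$ are determined by $p$ and $\epsilon$, so any implied constant of the form $\lesssim_{p,\epsilon,\dim(C^p),N(1)}$ is just $\lesssim_{p,\epsilon}$. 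Proposition~\ref{prop: 1d decoupling} then supplies $\nu_p(C^p) \le \frac{p}{2}\epsilon$, where $\nu_p$ is precisely the exponent in the one-dimensional $\ell^2 L^p$ hypothesis of Theorem~\ref{thm: chang} (which applies since $p > 2$).

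I would then apply Theorem~\ref{thm: chang} with $q = p$ and with its auxiliary parameter chosen to be $\frac{p}{4}\epsilon > 0$, obtaining
\[
D_{3p,k}(C^p) \;\lesssim_{p,\epsilon}\; N(k)^{\,\nu_p(C^p) + \frac{p}{4}\epsilon} \;\le\; N(k)^{\frac{3}{4}p\epsilon}.
\]
Unwinding the definition of $D_{3p,k}(C^p)$ — the least constant $D$ for which $\|\sum_{J\in\I_k} f_{\theta_J}\|_{L^{3p}(\R^2)} \le D\big(\sum_{J\in\I_k}\|f_{\theta_J}\|_{L^{3p}(\R^2)}^2\big)^{1/2}$ holds for all $f_{\theta_J}$ Fourier-supported in $\theta_J$ — gives the claimed inequality verbatim. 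The whole content lies in the two cited results, so there is no real obstacle here; the only thing to watch is exactly the bookkeeping above, namely confirming that Theorem~\ref{thm: chang}'s dependence on $\dim(C)$ and $N(1)$ is harmless and that the split $\frac{p}{2}\epsilon + \frac{p}{4}\epsilon = \frac{3}{4}p\epsilon$ keeps the loss within the exponent asserted in the proposition.
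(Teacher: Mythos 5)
Your proposal is correct and is essentially identical to the paper's own (very brief) argument: feed the bound $\nu_p(C^p)\leq(p/2)\epsilon$ from Proposition \ref{prop: 1d decoupling} into Theorem \ref{thm: chang} with $q=p$ and auxiliary parameter $(p/4)\epsilon$, then unwind the definition of $D_{3p,k}(C^p)$. The bookkeeping you flag (that the dependence on $\dim(C^p)$ and $N(1)=N$ is harmless once $N$ is fixed in terms of $p$ and $\epsilon$) is exactly the right point to check and is handled correctly.
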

Since $N(k)=N^k$, \eqref{eqn: N K(delta)} implies that $$N(k)\leq N^{K(\delta)}\lesssim_N\delta^{-1/p}\quad\text{for all}\quad 1\leq k\leq K(\delta).$$
  Consequently, for all integers $1\leq k\leq K(\delta)$, we have \begin{align}\label{eqn: 2d decoupling}\|\sum_{J\in\I_{k}}f_{\theta_J}\|_{L^{3p}(\R^2)}\lesssim_{\epsilon}\delta^{-\frac{3}{4}\epsilon}\bigg(\sum_{J\in\I_{k}}\|f_{\theta_J}\|_{L^{3p}(\R^2)}^2\bigg)^{1/2}.\end{align} Decoupling estimates of this form have previously been obtained in \cite{ryou} to prove certain restriction estimates.
  
We use \eqref{eqn: 2d decoupling} to prove the following estimate, which will be useful in \S\ref{sec: BR}.
\begin{corollary}
    \label{cor: br decoupling}
    For all $0<\delta<1/2$ we have 
    \begin{align}\label{eqn: BR decoupling}\|\sum_{J\in\I(\delta)}f_{\theta_J}\|_{L^{3p}(\R^2)}\lesssim_{\epsilon}\delta^{-\frac{4\epsilon}{5}}\bigg(\sum_{J\in\I(\delta)}\|f_{\theta_J}\|_{L^{3p}(\R^2)}^2\bigg)^{1/2}.\end{align}
\end{corollary}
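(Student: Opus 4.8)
The plan is to bootstrap from the clean scale-$\delta$ estimate \eqref{eqn: 2d decoupling}, which is valid when $\delta$ is exactly a power of the base $N$ (more precisely, for $\delta$ with $K(\delta)$ an integer and $\I(\delta)$ replaced by the full level $\I_{K(\delta)}$), to an arbitrary $\delta\in(0,1/2)$ and the mixed family $\I(\delta)=\Ir\sqcup\I_{K(\delta)}$. There are two discrepancies to reconcile: first, $\I(\delta)$ contains removed intervals of various sizes, not just the length-$N^{-pK(\delta)/2}$ intervals of the top level; second, $\delta$ need not be of the form $N^{-pk/2}$, so $K(\delta)$ rounds things off by at most a bounded multiplicative factor in scale.

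First I would handle the removed intervals. Each $I\in\Ir$ is a connected component of $C_{j-1}\setminus C_j$ for some $1\le j\le K(\delta)$, hence is sandwiched between two parent intervals of $\I_{j}$ and has length comparable to $N^{-pj/2}$; crucially the portion of the parabola over $I$, i.e.\ $\theta_I$, is contained in (a bounded dilate of) $\theta_{I'}$ for the containing parent interval $I'\in\I_{j-1}$. So I can run a two-step argument: apply \eqref{eqn: 2d decoupling} at the \emph{coarsest} level, $k=1$, decoupling $\sum_{J\in\I(\delta)}f_{\theta_J}$ into pieces supported in $\theta_{I}$ for $I\in\I_1$ at a cost of $\delta^{-3\epsilon/4}$; then, inside each such $\theta_I$, rescale (using the affine map $L_I$, under which $\theta_I\mapsto\theta_{[-1/2,1/2]}$ and the sub-parallelograms transform appropriately) and iterate. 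At each of the $\le K(\delta)$ levels the cost is a further factor $\lesssim_\epsilon N(1)^{3p\epsilon/4}\lesssim_\epsilon N^{3p\epsilon/4}$, and the removed intervals at level $j$ get peeled off at step $j$ (decoupling a parent of $\I_{j-1}$ into its $\le N$ children in $\I_j'\cup\I_j$, which is just the $k=1$ estimate rescaled, with $\ell^2$-orthogonality of the single removed piece essentially free since there is $O(1)$ of them per parent). Summing the logarithmic costs: the total constant is at most $\bigl(\text{const}_\epsilon\, N^{3p\epsilon/4}\bigr)^{K(\delta)}$, and since $K(\delta)\le 1+\tfrac1p\tfrac{\log\delta^{-1}}{\log N}$ by \eqref{eqn: K(delta) bound}, this is $\lesssim_\epsilon \delta^{-\frac{3}{4}\epsilon}\cdot\delta^{o(1)}$, which for $\delta$ small is $\le \delta^{-\frac{4\epsilon}{5}}$; the $\log d_p/\log N$-type loss and the rounding between $\delta$ and $N^{-pK(\delta)/2}$ are absorbed into the gap between $\tfrac34\epsilon$ and $\tfrac45\epsilon$ by taking $N$ large, exactly as in Proposition~\ref{prop: 1d decoupling}.

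A cleaner alternative, which I would actually write, avoids re-deriving the iteration: observe $\I(\delta)$ refines into $\I_{K(\delta)}$ (each removed interval $I\in\Ir$ is a union of $O_N(1)$... no — rather, group the terms). The slickest route: bound $\|\sum_{J\in\I(\delta)}f_{\theta_J}\|_{3p}\le \|\sum_{I\in\Ir}f_{\theta_I}\|_{3p}+\|\sum_{I\in\I_{K(\delta)}}f_{\theta_I}\|_{3p}$ and treat the two sums. For the second, \eqref{eqn: 2d decoupling} applies directly with $k=K(\delta)$. For the first, split $\Ir=\bigsqcup_{j=1}^{K(\delta)}\I_j'$ by generation, use the triangle inequality over the $K(\delta)$ generations (costing only a factor $K(\delta)\lesssim_N \log\delta^{-1}=\delta^{o(1)}$ after Cauchy–Schwarz), and for each fixed generation $j$ note the removed intervals of $\I_j'$ are $O(1)$-per-parent-of-$\I_{j-1}$, so decoupling them is governed by \eqref{eqn: 2d decoupling} at level $j-1$ followed by a trivial $\ell^2$ over the bounded-multiplicity removed children. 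Each such term is $\lesssim_\epsilon \delta^{-3\epsilon/4}(\sum\|f_{\theta_I}\|_{3p}^2)^{1/2}$; combining over $j$ with the extra $\delta^{o(1)}$ loss gives $\delta^{-4\epsilon/5}$ for $N$ large.

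The main obstacle is the removed intervals: they do not form a single level of the Cantor construction, so \eqref{eqn: 2d decoupling} does not apply to them off the shelf, and I must verify that (a) each $\theta_I$, $I\in\Ir$, is contained in a bounded dilate of the parallelogram of its containing parent interval — this is where the parabola-lifting geometry and the $(p/4)N^{-p/2}$-separation of $\I(N;p)$ enter — and (b) the per-generation multiplicity of removed intervals is $O(1)$, so that summing over generations only costs a $\log\delta^{-1}=\delta^{o(1)}$ factor rather than a polynomial-in-$\delta$ factor. Once (a) and (b) are in hand, everything else is the same geometric-series-of-logarithmic-losses bookkeeping already used for $\nu_p(C^p)\le(p/2)\epsilon$, with $N$ chosen large enough (depending on $p,\epsilon$) to convert $\tfrac34\epsilon$ into $\tfrac45\epsilon$.
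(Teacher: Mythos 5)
Your ``cleaner alternative'' is essentially the paper's own proof: split $\I(\delta)$ by generation of removal, pay a $(K(\delta)+1)^{1/2}\lesssim(\log\delta^{-1})^{1/2}$ Cauchy--Schwarz loss over generations, apply \eqref{eqn: 2d decoupling} at the parent level $\I_{j-1}$ (the grouped pieces sit in bounded dilates of the parent parallelograms), and finish with a trivial $\ell^2$ over the $N-1=O_{p,\epsilon}(1)$ removed children per parent, absorbing the $N^{1/2}(\log\delta^{-1})^{1/2}$ losses into the gap between $\tfrac34\epsilon$ and $\tfrac45\epsilon$. This is correct; only note that your discarded first sketch (iterating the single-scale estimate $K(\delta)$ times) would exponentiate the $N$-dependent constant from Theorem \ref{thm: chang}, which is exactly why the per-generation application at level $j-1$ is the right move.
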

\begin{proof}
    The idea is to combine the non-trivial decoupling estimates \eqref{eqn: 2d decoupling}, with trivial decoupling estimates obtained by the Cauchy--Schwarz inequality. Recall the partition of $\I(\delta)$ as \begin{equation*}\I(\delta)=\I_0'\cup\I_1'\cup\I_2'\cup\dots\cup\I_{K(\delta)}',\end{equation*} where $\I_0'=\I_{K(\delta)}$. We first use the Cauchy--Schwarz inequality to decouple the partitioning sets on the right-hand side above as follows $$\|\sum_{J\in\I(\delta)}f_{\theta_J}\|_{L^{3p}(\R^2)}\leq [K(\delta)+1]^{1/2}\bigg(\sum_{k=0}^{K(\delta)}\|\sum_{J\in\I_k'}f_{\theta_J}\|^2_{L^{3p}(\R^2)}\bigg)^{1/2}.$$
    The right-hand term corresponding to $k=0$ can be treated directly using \eqref{eqn: 2d decoupling}. For $k\geq 1$, we use \eqref{eqn: 2d decoupling} to decouple the intervals of $\I_{k-1}$ as $$\|\sum_{J\in\I_k'}f_{\theta_J}\|_{L^{3p}(\R^2)}\lesssim_\epsilon\delta^{-\frac{3\epsilon}{4}}\bigg(\sum_{I\in\I_{k-1}}\|\sum_{J\in\I_k'(I)}f_{\theta_J}\|_{L^{3p}(\R^2)}^2\bigg)^{1/2},$$ where $\I_k'(I)=\{J\in\I_k':J\subset I\}$. Now recall that $\text{card}(\I_k'(I))= N-1$ for each $I\in\I_{k-1}$. Hence, we can locally decouple the intervals of $\I_k'(I)$ as $$\|\sum_{J\in\I_k'(I)}f_{\theta_J}\|_{L^{3p}(\R^2)}\leq N^{1/2}\bigg(\sum_{J\in\I_k'(I)}\|f_{\theta_J}\|_{L^{3p}(\R^2)}^2\bigg)^{1/2},$$ using Cauchy--Schwarz again. Combining everything we get $$\|\sum_{J\in\I(\delta)}f_{\theta_J}\|_{L^{3p}(\R^2)}\lesssim_\epsilon\delta^{-\frac{3\epsilon}{4}}[K(\delta)+1]^{1/2}N^{1/2}\bigg(\sum_{k=0}^{K(\delta)}\sum_{J\in\I_k'}\|f_{\theta_J}\|_{L^{3p}(\R^2)}^2\bigg)^{1/2}.$$ This implies the desired decoupling estimate \eqref{eqn: BR decoupling}, in light of \eqref{eqn: K(delta) bound}.
\end{proof}
\section{Bochner--Riesz bounds}\label{sec: BR}
Recall that $B^\alpha_\Omega$ is given by $$(B^\alpha_\Omega f)\;\widehat{}\;(\xi)=(1-\rho(\xi))_+^\alpha\widehat{f}(\xi),$$ where $\rho$ is the Minkowski functional of $\Omega$. We derive the estimates \eqref{eqn: main a}--\eqref{eqn: lambda p main b} in this section. By a decomposition of the domain into dyadic `annuli', it suffices to prove the following.
\begin{prop}
    \label{prop: main} Let $\Omega_p$ be the domain constructed in \S\ref{sec: construction subsection}, and $\rho$ be its Minkowski functional.
     Let $\beta\in C^2_0(-1/2,1/2)$ satisfy $$|\beta^{(k)}(t)|\leq 1,\qquad\text{for}\qquad k=0,\dots,4.$$ Define the multiplier $$m_{\delta,\alpha}(\xi):=\delta^\alpha\beta\bigg(\frac{\delta^{-1}}{2}\big(1-\rho(\xi)\big)\bigg).$$
If $\mathcal{F}$ denotes the Fourier transform, then \begin{equation}\label{eqn: lp bound br}
    \bigg\|\mathcal{F}^{-1}[m_{\delta,\alpha}\widehat{f}]\bigg\|_{L^{q}(\R^2)}\lesssim_\epsilon\delta^{\alpha-\nu_{q}-\epsilon}\|f\|_{L^{q}(\R^2)},\end{equation} for $$\nu_q=\begin{cases}
        0,\quad& q=4,\\
        \kappa_{\Omega_p}(1/2-1/3p),\quad& q=3p,\\
        \kappa_{\Omega_p},\quad& q=\infty.
    \end{cases}$$
    When $p=2m$ for an integer $m\geq 2$, we have the additional estimate \begin{equation}\label{eqn: l6m bound br}
    \bigg\|\mathcal{F}^{-1}[m_{\delta,\alpha}\widehat{f}]\bigg\|_{L^{2m}(\R^2)}\lesssim_\epsilon\delta^{\alpha-\nu_{2m}-\epsilon}\|f\|_{L^{2m}(\R^2)},\end{equation}
    where $\nu_{2m}=\kappa_{\Omega_{2m}}(1/2-1/m)$.
\end{prop}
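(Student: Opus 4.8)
The plan is to prove the multiplier estimate \eqref{eqn: lp bound br} at the three exponents $q=4$, $q=3p$, $q=\infty$ separately, and then obtain the general statement for $4\le q\le\infty$ by interpolation (noting the three endpoint exponents in the proposition are only special cases — the actual interpolation producing \eqref{eqn: main a}--\eqref{eqn: lambda p main b} is done in the subsequent section). The key geometric point throughout is that the multiplier $m_{\delta,\alpha}$ is supported in the annulus $\{1-\delta\lesssim\rho(\xi)\le 1\}$, which is a $\delta$-neighbourhood of $\partial\Omega_p$; decomposing this neighbourhood using the caps $\mathfrak{B}_\delta = \{B_I : I\in\I(\delta)\}$ from Claim~\ref{claim: near optimal covering}, one gets a decomposition $m_{\delta,\alpha}\widehat f = \sum_{I\in\I(\delta)} m_{\delta,\alpha}\widehat f_I$ where each piece is Fourier-supported in a plank comparable to $\theta_I$ (after an affine normalization), with $\mathrm{card}(\I(\delta))\approx_{p,N} N^{K(\delta)}\approx_N \delta^{-1/p} = \delta^{-\kappa_{\Omega_p}}$ many planks.

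For $q=\infty$ the estimate is essentially trivial: $\|\mathcal F^{-1}[m_{\delta,\alpha}\widehat f]\|_\infty \lesssim \|m_{\delta,\alpha}\widehat f\|_1$-type reasoning, or more robustly, one uses $\|\cdot\|_\infty \le \sum_I \|\mathcal F^{-1}[m_{\delta,\alpha}\widehat f_I]\|_\infty$ together with the fact that each single-plank piece is controlled by $\delta^\alpha\|f\|_\infty$ (up to Schwartz tails), so the number of planks $\delta^{-\kappa_{\Omega_p}}$ is the only loss, giving $\nu_\infty = \kappa_{\Omega_p}$. For $q=4$ the point is the \emph{universal} $L^4$ square function estimate of Seeger--Ziesler (equivalently $\mathcal E_{2,\Omega}=0$ for every convex domain), which via Theorem~\ref{thm: bochner riesz bounds} with $m=2$ gives $\alpha>\kappa_{\Omega_p}(1-4/4)=0$, i.e.\ $\nu_4=0$. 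The substantive endpoint is $q=3p$: here I would combine the two-dimensional decoupling estimate \eqref{eqn: BR decoupling} from Corollary~\ref{cor: br decoupling} — which gives an $\ell^2 L^{3p}$ decoupling into the planks $\theta_J$, $J\in\I(\delta)$, with only a $\delta^{-\epsilon}$-type loss — with an $L^{3p}$ bound for each individual plank piece and an $\ell^2$-to-$L^{3p}$ reassembly (again via Minkowski and the finite-overlap/weight bound \eqref{eqn: sum of weights is bounded}). Since a single plank contributes $\delta^\alpha$ and there are $\delta^{-\kappa_{\Omega_p}}$ of them, the square-function in $\ell^2$ costs $\delta^{-\kappa_{\Omega_p}/2}$ relative to $\delta^\alpha$ in the worst case, but the curvature of the parabola lets one do better: the planks $\theta_J$ at scale $\delta$ have transverse width $\delta$ and the annulus has width $\delta$, so the standard Bochner--Riesz-type orthogonality (a localized version, summing over $\delta^{-1/2}\times\delta^{-1/2}$ spatial cubes) improves $\kappa_{\Omega_p}/2$ to $\kappa_{\Omega_p}(1/2 - 1/3p)$, matching $\nu_{3p}$. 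For the special case $p=2m$ and the extra estimate \eqref{eqn: l6m bound br} at $q=2m$, I would instead invoke the $L^{2m}$ square function estimate coming from the low additive energy bound $\mathcal E_{m,\Omega_{2m}}\le m\epsilon$ established in \S\ref{sec: additive energy}, plugged into Theorem~\ref{thm: bochner riesz bounds} with exponent $2m$: this yields $\alpha > \mathcal E_{m,\Omega_{2m}}/2m + \kappa_{\Omega_{2m}}(1-(m+2)/2m) = \kappa_{\Omega_{2m}}(1/2 - 1/m) + \epsilon/2$, i.e.\ $\nu_{2m}=\kappa_{\Omega_{2m}}(1/2-1/m)$ up to the harmless $\epsilon$-loss.

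The technical glue in all cases is the passage from "$m_{\delta,\alpha}$ restricted to one cap $B_I$" to "$f_{\theta_I}$ Fourier-supported in the model parallelogram $\theta_I$": one must check that, after the affine map normalizing $I$ to a standard interval, the graph $\{(t,\gamma(t))\}$ over $I$ lies within $O(|I|^2)$ of its tangent parabola — this is exactly the computation already carried out in the proof of Claim~\ref{claim: near optimal covering} (the bounds $\mathrm{dist}(P_t,\ell_I)\le \tfrac34(b-a)^2$), so $\theta_I$ genuinely captures the $\delta$-neighbourhood of the boundary piece over $I$ — and that the symbol $\beta(\delta^{-1}(1-\rho(\xi))/2)$ restricted to such a plank is a smooth bump adapted to that plank with derivative bounds uniform in $I$ (this uses the $C^2$ hypothesis on $\beta$ and the convexity/regularity of $\gamma$; the planks coming from removed intervals $I\in\Ir$ are even easier since $\gamma$ is \emph{affine} there and the plank degenerates to a genuine $\delta$-slab).

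\textbf{The main obstacle} I anticipate is the $q=3p$ case: correctly organizing the interplay between (a) the decoupling loss $\delta^{-\epsilon}$ from Corollary~\ref{cor: br decoupling}, (b) the single-plank estimate, and (c) the $\ell^2$-sum over $\delta^{-\kappa_{\Omega_p}}$ planks, so that the exponents combine to exactly $\alpha - \kappa_{\Omega_p}(1/2-1/3p) - \epsilon$ rather than something worse. In particular, naively decoupling and then bounding each plank piece in $L^{3p}$ by $\delta^\alpha\|f_{\theta_J}\|_{L^{3p}}$ and summing would give the crude exponent $\kappa_{\Omega_p}/2$; recovering the sharper $\kappa_{\Omega_p}(1/2-1/3p)$ requires exploiting, on each $\delta^{-1/2}$-cube, that only $\lesssim 1$ plank is "active" transversally (an $L^2$-orthogonality / locally-constant argument on the physical side), i.e.\ one interpolates the decoupling bound against the local $L^2$ bound before summing globally in $L^{3p}$. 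Getting this interpolation exponent-accounting right — and ensuring the weight functions $w_Q$ and Schwartz tails are absorbed without loss — is where the care is needed; everything else ($q=4$, $q=\infty$, the $p=2m$ addendum) is a direct appeal to Theorem~\ref{thm: bochner riesz bounds} with the dimension \eqref{eqn: dimension of omega} and energy \eqref{eqn: energy of omega} already computed.
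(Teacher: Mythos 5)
Your treatment of the easy endpoints matches the paper: $q=4$ and the $q=2m$ addendum follow from Proposition \ref{prop: BR 2n energy est} together with $\mathcal{E}_{2,\Omega}=0$ and $\mathcal{E}_{m,\Omega_{2m}}\leq m\epsilon$ respectively, and $q=\infty$ follows from Proposition \ref{prop: BR infty dimension est}; you also correctly identify Corollary \ref{cor: br decoupling} as the key input at $q=3p$. The problem is that at the $q=3p$ reassembly step---which you yourself flag as the main obstacle and then leave unexecuted---the mechanism you propose is not the one that yields the stated exponent. The improvement from the crude $\kappa_{\Omega_p}/2$ to $\kappa_{\Omega_p}(1/2-1/3p)$ does not come from curvature of the parabola, nor from a claim that ``only $\lesssim 1$ plank is active transversally on each $\delta^{-1/2}$-cube'': that spatial disjointness assertion is false for this family of planks (for the removed intervals the $\theta_J$ are genuine $\delta$-slabs whose dual slabs overlap heavily, and in any case nothing forces the wave packets of distinct planks to occupy disjoint $\delta^{-1/2}$-cubes).

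What the paper actually does after the decoupling \eqref{eqn: BR decoupling} is: (a) apply H\"older to convert the $\ell^2$ sum over $\I(\delta)$ into an $\ell^{3p}$ sum, paying exactly $\mathrm{card}(\I(\delta))^{1/2-1/3p}\approx\delta^{-\kappa_{\Omega_p}(1/2-1/3p)}$, and then (b) bound the resulting $\ell^{3p}L^{3p}$ quantity by $\delta^{\alpha}\log\delta^{-1}\,\|f\|_{L^{3p}}$ with \emph{no} loss in the number of caps (Lemma \ref{lemma: mixed norm interpolation}), via vector-valued interpolation between $\ell^2L^2$ (Plancherel plus the bounded overlap of the frequency cutoffs $\beta_J(\xi_1)$ --- orthogonality on the frequency side, not the physical side) and $\ell^\infty L^\infty$ (Young plus the kernel bound $\|K^J_{\delta,\alpha}\|_{L^1}\lesssim\delta^\alpha\log\delta^{-1}$ of Proposition \ref{prop: kernel estimate}). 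The kernel bound is itself not free: it requires the Whitney-type subdivision of each $I\in\I(\delta)$ into $O(\log\delta^{-1})$ subintervals $I_j$ so that $(t-s)(\gamma_L'(t)-\gamma_R'(s))\lesssim\delta$ holds on each piece and Seeger--Ziesler's multiplier estimate applies; your assertion that each single-plank piece is controlled by $\delta^\alpha$ ``up to Schwartz tails'' presupposes precisely this step. Since the exponent accounting at $q=3p$ is deferred and the sketched mechanism would not produce the $1/3p$ gain, the central estimate \eqref{eqn: 6m bound for br} is not established by your proposal.
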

Theorem \ref{thm: main} and Theorem \ref{thm: lambda p main} both follow from the above proposition by interpolation.
\begin{remark}
Alternatively, one could obtain the $q=4$ and $q=\infty$ bounds for $B^\alpha_{\Omega_p}$ from Theorem \ref{thm: seeger--ziesler}.
\end{remark}
The $L^4$ and $L^{2m}$ estimates follows from a stronger version of Theorem \ref{thm: bochner riesz bounds} stated below.
\begin{prop}[{\cite[Proposition 4.1]{cladek_BR}}]\label{prop: BR 2n energy est}
Let $\Omega$ be a convex domain and $n\geq 2$ be an integer. If $m_{\delta,\alpha}$ is the multiplier as in the statement of Proposition \ref{prop: main}, then for all $\eta>0$ we have\begin{equation}\label{eqn: br dimension energy bound} \bigg\|\mathcal{F}^{-1}[m_{\delta,\alpha}\widehat{f}]\bigg\|_{L^{2n}(\R^2)}\lesssim_{\eta}\delta^{[\alpha-\frac{\mathcal{E}_{n,\Omega}}{2n}-\kappa_\Omega(\frac{1}{2}-\frac{1}{n})-\eta]}\|f\|_{L^{2n}(\R^2)}.\end{equation}
\end{prop}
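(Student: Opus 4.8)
The plan is to split the desired $L^{2n}$ bound into two essentially independent pieces: an $L^{2n}$ \emph{reverse square function} inequality whose only loss is $\Xi_n(\Omega,\delta)^{1/2n}\lesssim_\eta\delta^{-\mathcal{E}_{n,\Omega}/2n-\eta}$ (this is where the $n$-fold additive structure of $\Omega$ enters, via the definition of $\mathcal{E}_{n,\Omega}$), together with an $L^{2n}$ bound for the square function itself, obtained by interpolating the universal $L^4$ biorthogonality estimate for convex planar domains against a trivial $L^\infty$ estimate. To set up, I would first decompose $F:=\mathcal{F}^{-1}[m_{\delta,\alpha}\widehat f]$ into plates. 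The multiplier $m_{\delta,\alpha}$ is supported in the thin annular region $\{\xi:|1-\rho(\xi)|<\delta\}$; for each $\delta$-cap $B\in\mathfrak{B}_\delta$ let $R_B$ be the portion of this region lying radially over $B$, and let $\{\phi_B\}_{B\in\mathfrak{B}_\delta}$ be a smooth partition of unity subordinate to the $R_B$. Since $\mathfrak{B}_\delta$ covers $\partial\Omega$ with bounded overlap, the plates $R_B$ do too. Set $F_B:=\mathcal{F}^{-1}[m_{\delta,\alpha}\phi_B\widehat f]$, so $F=\sum_BF_B$, and $Sf:=\big(\sum_B|F_B|^2\big)^{1/2}$.

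For the reverse square function step I would fix a partition $\mathfrak{B}_\delta=\bigsqcup_{i=1}^{M_0}\mathfrak{B}_{\delta,i}$ achieving $M_0^{2n}M_1=\Xi_n(\Omega,\delta)$ with finite overlap constant $M_1$ as in \eqref{eqn: finite overlap property}, and for $F_i:=\sum_{B\in\mathfrak{B}_{\delta,i}}F_B$ expand $\|F_i\|_{L^{2n}}^{2n}=\|F_i^{\,n}\|_{L^2}^2$. Because $\widehat{F_{B_1}\cdots F_{B_n}}$ is supported in $R_{B_1}+\dots+R_{B_n}$, which is comparable to a neighbourhood of the cap-sumset $B_1+\dots+B_n$, the finite overlap property together with Plancherel and Cauchy--Schwarz gives $\|F_i^{\,n}\|_{L^2}^2\lesssim M_1\int\big(\sum_{B\in\mathfrak{B}_{\delta,i}}|F_B|^2\big)^n\le M_1\|Sf\|_{L^{2n}}^{2n}$; summing the resulting bound $\|F_i\|_{L^{2n}}\lesssim M_1^{1/2n}\|Sf\|_{L^{2n}}$ over the $M_0$ classes yields $\|F\|_{L^{2n}}\le(M_0^{2n}M_1)^{1/2n}\|Sf\|_{L^{2n}}=\Xi_n(\Omega,\delta)^{1/2n}\|Sf\|_{L^{2n}}\lesssim_\eta\delta^{-\mathcal{E}_{n,\Omega}/2n-\eta/2}\|Sf\|_{L^{2n}}$.

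It then remains to prove $\|Sf\|_{L^{2n}}\lesssim_\eta\delta^{\alpha-\kappa_\Omega(1/2-1/n)-\eta/2}\|f\|_{L^{2n}}$, which I would get by interpolating two endpoints. At $L^\infty$: $m_{\delta,\alpha}\phi_B$ is $\delta^\alpha$ times an $L^1$-normalised smooth bump adapted to $R_B$ (using the bounds $|\beta^{(k)}|\le1$, $k\le4$), so $\|F_B\|_{L^\infty}\le\|\mathcal{F}^{-1}[m_{\delta,\alpha}\phi_B]\|_{L^1}\|f\|_{L^\infty}\lesssim\delta^\alpha\|f\|_{L^\infty}$, and summing over the $\lesssim_\epsilon\delta^{-\kappa_\Omega-\epsilon}$ plates gives $\|Sf\|_{L^\infty}\lesssim_\epsilon\delta^{\alpha-\kappa_\Omega/2-\epsilon}\|f\|_{L^\infty}$. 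At $L^4$: after factoring $\delta^\alpha$ out of the multiplier, the C\'ordoba--Seeger--Ziesler square function estimate for convex planar domains — which is precisely the assertion $\mathcal{E}_{2,\Omega}=0$, cf. \cite[Lemma 2.4]{SZ} and \cite{Cordoba} — gives $\|Sf\|_{L^4}\lesssim_\epsilon\delta^{\alpha-\epsilon}\|f\|_{L^4}$. Interpolating these with parameter $2/n\in(0,1]$ on the $L^4$ side (legitimate since $n\ge2$) produces the claimed $L^{2n}$ bound for $Sf$; combining with the previous step and absorbing the $\epsilon$'s into $\eta$ yields $\|F\|_{L^{2n}}\lesssim_\eta\delta^{\alpha-\mathcal{E}_{n,\Omega}/2n-\kappa_\Omega(1/2-1/n)-\eta}\|f\|_{L^{2n}}$.

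The main obstacle is the $L^4$ endpoint: establishing the biorthogonality of pairs of plates of an arbitrary convex domain — that each $\xi$ lies in only $\lesssim_\epsilon\delta^{-\epsilon}$ of the difference sets $R_B-R_{B'}$, equivalently that $\mathcal{E}_{2,\Omega}=0$ — and converting it into the clean square function estimate $\|Sf\|_{L^4}\lesssim_\epsilon\delta^{\alpha-\epsilon}\|f\|_{L^4}$. This is the Seeger--Ziesler generalisation of C\'ordoba's argument for the parabola, and I would invoke it rather than reprove it. The remaining points are routine geometry: that the plates $R_B$ and their $n$-fold sumsets $R_{B_1}+\dots+R_{B_n}$ have the overlap behaviour asserted (in particular that the latter are comparable to neighbourhoods of the cap-sumsets, so that \eqref{eqn: finite overlap property} transfers), and that $\mathcal{F}^{-1}[m_{\delta,\alpha}\phi_B]$ is genuinely an $L^1$-normalised bump of height $\delta^\alpha$ uniformly in $B$, which is where the $C^2$ control on $\beta$ is used.
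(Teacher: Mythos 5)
First, a point of comparison: the paper does not prove this proposition at all — it is imported verbatim as \cite[Proposition 4.1]{cladek_BR}, and the text around it only explains how it is \emph{applied} (with $n=m$, $\eta=\epsilon/2$, and the energy bound $\mathcal{E}_{m,\Omega_{2m}}\leq m\epsilon$ from \S\ref{sec: additive energy}). So there is no in-paper argument to match; your sketch is, in outline, a faithful reconstruction of the cited proof: cap/plate decomposition of the multiplier, a reverse square function inequality at $L^{2n}$ via Plancherel and the partition realising $\Xi_n(\Omega,\delta)$, and an $L^{2n}$ bound for the square function by interpolating an $L^4$ estimate against a cardinality-times-kernel $L^\infty$ estimate. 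Your exponent bookkeeping ($\theta=2/n$ giving $\kappa_\Omega(1/2-1/n)$, and $\Xi_n^{1/2n}\lesssim_\eta\delta^{-\mathcal{E}_{n,\Omega}/2n-\eta}$) is correct.

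Two places where you declare ``routine'' something that is genuinely not, and which you should instead cite precisely. (a) The forward $L^4$ square function bound $\|Sf\|_{L^4}\lesssim_\epsilon\delta^{\alpha-\epsilon}\|f\|_{L^4}$ is \emph{not} ``precisely the assertion $\mathcal{E}_{2,\Omega}=0$.'' The statement $\mathcal{E}_{2,\Omega}=0$ (biorthogonality of pair sums of caps) is what powers the \emph{reverse} square function inequality, i.e.\ $\|F\|_{L^4}\lesssim(\log\delta^{-1})^{C}\|Sf\|_{L^4}$, by exactly the Plancherel/overlap mechanism you use at exponent $2n$. The forward bound for $Sf$ is a separate estimate: trivial routes (Minkowski plus kernel bounds, or $L^2$--$L^\infty$ interpolation) lose a power $\delta^{-\kappa_\Omega/4}$, and the clean version in Seeger--Ziesler requires additional input (directional maximal-function/vector-valued arguments for the $\approx\delta^{-\kappa_\Omega}$ plate directions). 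It is available in the literature and is what Cladek invokes, but you should cite it as such rather than derive it from $\mathcal{E}_{2,\Omega}=0$. (b) The claim that $m_{\delta,\alpha}\phi_B$ is $\delta^\alpha$ times an $L^1$-normalised smooth bump ``using the bounds on $\beta$'' is false as stated: $\rho$ is only the Minkowski functional of a rough convex body, so the localised multiplier has no useful smoothness in the radial variable near the ends of a cap, and one cannot integrate by parts naively. The correct substitute is \cite[Proposition 3.2]{SZ}, which requires first subdividing each cap interval into $O(\log\delta^{-1})$ pieces with the flatness condition $(t-s)(\gamma_L'(t)-\gamma_R'(s))\lesssim\delta$ on doubles — exactly the $\mathcal{J}(\delta)$ construction this paper carries out for its own $L^{3p}$ estimate — and yields $\|K^J_{\delta,\alpha}\|_{L^1}\lesssim\delta^{\alpha}\log\delta^{-1}$. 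Both the extra $\log$ factors and the extra $\log\delta^{-1}$ cardinality from the subdivision are harmless given the $\delta^{-\eta}$ slack, so the architecture survives; similarly, transferring the finite-overlap hypothesis \eqref{eqn: finite overlap property} from exact cap sumsets to the $O(\delta)$-thickened plate sumsets needs a small scale dilation (caps at scale $C\delta$), not just the literal hypothesis. With those repairs — all of which amount to citing Seeger--Ziesler's kernel and square function machinery rather than treating it as elementary — your proof is the standard one for this proposition.
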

In \S\ref{sec: additive energy}, we showed that the domain $\Omega_{2m}$ enjoys the special property $\mathcal{E}_{m,\Omega_{2m}}\leq m\epsilon$. Using this in \eqref{eqn: br dimension energy bound} with $n=m$, and $\eta=\frac{\epsilon}{2}$, we obtain \eqref{eqn: l6m bound br}. 

For $q=4$, we use the following result of Seeger--Ziesler \cite{SZ}.
\begin{prop}[{\cite[Lemma 2.4]{SZ}}]
Every convex domain $\Omega$ satisfies $\mathcal{E}_{2,\Omega}=0$.
\end{prop}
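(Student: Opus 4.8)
The plan is to reprise the Córdoba–Fefferman biorthogonality argument (\cite{Cordoba,fefferman_biorthogonality}), after chopping $\partial\Omega$ into boundedly many graph pieces. Recalling the definition of $\mathcal E_{2,\Omega}$, it suffices to exhibit, for each $\delta\in(0,1/2)$, a partition $\mathfrak B_\delta=\bigsqcup_{i=1}^{M_0}\mathfrak B_{\delta,i}$ of a minimal cover of $\partial\Omega$ by $\delta$-caps such that $M_0=O(1)$ and, for every $i$ and every $\xi\in\R^2$, at most $M_1=O(1)$ ordered pairs $(B,B')\in\mathfrak B_{\delta,i}^{\,2}$ satisfy $\xi\in B+B'$; then $\Xi_2(\Omega,\delta)\le M_0^{4}M_1=O(1)$, so a fortiori $\mathcal E_{2,\Omega}=\limsup_{\delta\to0^+}\frac{\log\Xi_2(\Omega,\delta)}{\log\delta^{-1}}=0$ (the reverse inequality being trivial). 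The only obstruction to an outright bounded-overlap statement is the presence of pairs of caps with nearly antiparallel normals (the circle with $\xi$ at its centre is the model); this is removed by the partition.

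The partition is by normal direction. Fix a small absolute angle $\alpha_0<\pi/2$ and cover the circle of directions by $O(1)$ arcs $W_1,\dots,W_{M_0}$ of length $\alpha_0$; assign each $\delta$-cap $B(\ell,\delta)$ to a class $W_i$ containing the direction normal to its supporting line $\ell$. Across a single $\delta$-cap the normal direction varies by $O(\delta^{1/2})\ll\alpha_0$, so this is essentially a partition, with the few ambiguous caps broken arbitrarily. By convexity the normal map along $\partial\Omega$ is monotone, so the caps in class $W_i$ cover a single connected sub-arc $\Gamma_i\subset\partial\Omega$ whose tangent directions vary by less than $\alpha_0<\pi/2$; after a rotation, $\Gamma_i$ is the graph $\{(t,\phi_i(t)):t\in I_i\}$ of a convex function $\phi_i$ with bounded slopes. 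In particular no two caps within a class have nearly antiparallel normals.

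The heart of the matter is the $O(1)$ overlap inside each class, which is the classical parabola biorthogonality transplanted to $\Gamma_i$. Suppose $\xi\in B+B'$ with $B,B'$ caps covering intervals $J,J'\subset I_i$. Then $\xi_1\in J+J'$, so $c_{J'}=\xi_1-c_J+O(|J|+|J'|)$, and since $\Gamma_i$ is a bounded-slope graph, $\xi_2=\phi_i(c_J)+\phi_i(c_{J'})+O(|J|+|J'|)=\Psi(c_J)+O(|J|+|J'|)$, where $\Psi(u):=\phi_i(u)+\phi_i(\xi_1-u)$. The function $\Psi$ is \emph{convex}, since $\Psi''(u)=\phi_i''(u)+\phi_i''(\xi_1-u)\ge0$; hence $\Psi(u)=\xi_2+O(|J|+|J'|)$ can hold only on $O(1)$ intervals, each of which is short unless $\Psi$ is nearly affine there, which forces $\phi_i$ nearly affine near both $u$ and $\xi_1-u$, i.e. $\partial\Omega$ nearly flat there — but a nearly flat boundary arc is contained in a single $\delta$-cap. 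Thus $c_J$, and with it the pair $(B,B')$, is pinned to $O(1)$ choices, giving $M_1=O(1)$; together with $M_0=O(1)$ this completes the proof, and via Theorem~\ref{thm: bochner riesz bounds} recovers the universal $L^4$ bound.

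The main obstacle is making all of this uniform over \emph{every} convex domain, with no smoothness or non-degeneracy assumed: the caps of a minimal cover have widely varying sizes, corners and flat arcs must be absorbed without apology, and the step "a nearly flat arc lies in one cap" as well as the monotonicity of normals must be phrased through supporting lines rather than through curvature — exactly as in Seeger–Ziesler. Once the geometry is set up this way (monotone normal map, supporting lines, caps as $\delta$-neighbourhoods of supporting lines), the $\Psi$-convexity count goes through verbatim for arbitrary $\Omega$, and the bounded-overlap biorthogonality — hence $\mathcal E_{2,\Omega}=0$ — follows.
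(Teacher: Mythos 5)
Two steps in your argument fail for a \emph{general} convex domain, precisely because no curvature lower bound is available. First, the claim that ``across a single $\delta$-cap the normal direction varies by $O(\delta^{1/2})$'' is false without curvature bounded below: a $\delta$-cap $B(\ell,\delta)$ of, say, a very thin sliver or a polygon-with-corner can wrap around a vertex or an end, so the normals along one cap can sweep an angle comparable to $\pi$. Consequently, assigning a cap to a class by the normal of its defining supporting line does \emph{not} put the cap inside the arc $\Gamma_i$, and two caps of the same class need not lie on a common bounded-slope graph piece; the convexity of $\Psi$ is then unavailable for the points that actually realize $\xi\in B+B'$. (This can be patched --- since each point of $\partial\Omega$ lies in at most two caps of a minimal cover and the total turning is $2\pi$, only $O(1)$ caps can have large normal variation, and these can be placed in singleton classes --- but you neither state nor use these facts.) Second, and more seriously, your counting step mixes scales: you reduce to $\Psi(c_J)=\xi_2+O(|J|+|J'|)$ and then argue that the level band is ``short unless $\Psi$ is nearly affine there,'' whence $\partial\Omega$ is ``nearly flat'' and ``a nearly flat boundary arc is contained in a single $\delta$-cap.'' The flatness you obtain is only to precision $O(|J|+|J'|)$, i.e.\ to within the cap lengths, which for a general convex domain can be of order $1$, vastly larger than $\delta$; such an arc is \emph{not} within $\delta$ of a supporting line and need not be contained in one $\delta$-cap, so the pair $(B,B')$ is not pinned to $O(1)$ choices by this reasoning. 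The clean way to run your scheme is to avoid the error terms altogether: $\xi\in B+B'$ holds iff there is $t\in J\cap(\xi_1-J')$ with $\Psi(t)=\xi_2$ \emph{exactly}; for convex $\Psi$ the exact level set is at most two points unless $\xi_2=\min\Psi$, in which case it is an interval on which $\varphi_i$ is genuinely affine on both $[a,b]$ and $\xi_1-[a,b]$, and then minimality of the cover (at most one cap contained in a maximal flat edge, each boundary point in at most two caps) gives the $O(1)$ count.

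For comparison, the paper does not reprove this statement: it cites Seeger--Ziesler \cite[Lemma 2.4]{SZ}, and the accompanying remark records their structure, namely a near-optimal cover with $\mathrm{card}(\mathfrak B_\delta)\lesssim\log\delta^{-1}\,N(\Omega,\delta)$ and a partition into $M_0\lesssim\log\delta^{-1}$ classes (essentially by dyadic cap length) with overlap $M_1=2$. Sorting by cap length sidesteps both of the issues above, and since $\mathcal E_{2,\Omega}$ is a logarithmic limsup, $M_0\lesssim\log\delta^{-1}$ is just as good as $M_0=O(1)$; you are attempting a strictly stronger statement than is needed, and as written the two steps flagged above do not hold for arbitrary convex domains.
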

Using $\mathcal{E}_{2,\Omega}=0$ for $n=2$, and $\eta=\epsilon$, we get the $q=4$ case of \eqref{eqn: lp bound br}.
\begin{remark}
    In \cite[Lemma 2.4]{SZ} it is shown that for all convex domains $\Omega$, there exists a near-optimal covering $\mathfrak{B}_\delta$ of $\partial\Omega$ by $\delta$-caps in the sense that $$N(\Omega,\delta)\lesssim\card{\mathfrak B_\delta}\lesssim\log{\delta^{-1}}N(\Omega,\delta),$$ and a partition $$\mathfrak B_\delta=\bigsqcup_{i=1}^{M_0}\mathfrak B_{\delta,i}\quad\text{with}\quad M_0\lesssim\log{\delta^{-1}},$$ such that for all $1\leq i\leq M_0$, each $\xi\in\R^2$ is contained in at most $M_1=2$ sets in the family $\{B_{1,i}+B_{2,i}:B_{1,i},B_{2,i}\in\mathfrak B_{\delta,i}\}$. Indeed this implies that $\mathcal{E}_{2,\Omega}=0$ for all convex domains $\Omega$.
\end{remark}
For $q=\infty$, we have the following result due to Seeger--Ziesler \cite{SZ}.
\begin{prop}[{\cite[(3.32)]{SZ}}]\label{prop: BR infty dimension est}
    Let $\Omega$ be a convex domain, and $m_{\delta,\alpha}$ be the multiplier as in the statement of Proposition \ref{prop: main}. Then for all $\eta>0$ we have 
    \begin{equation}
        \label{eqn: br dimension bound}
        \bigg\|\mathcal{F}^{-1}[m_{\delta,\alpha}\widehat{f}]\bigg\|_{L^{\infty}(\R^2)}\lesssim_{\eta}\delta^{\alpha-\kappa_\Omega-\eta}\|f\|_{L^{\infty}(\R^2)}.
    \end{equation}
\end{prop}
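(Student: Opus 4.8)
Since $\mathcal{F}^{-1}[m_{\delta,\alpha}\widehat{f}]=\bigl(\mathcal{F}^{-1}m_{\delta,\alpha}\bigr)*f$, Young's inequality reduces the claim to the kernel estimate
\[
\|\mathcal{F}^{-1}m_{\delta,\alpha}\|_{L^1(\R^2)}\lesssim_{\eta}\delta^{\alpha-\kappa_\Omega-\eta}.
\]
This is trivial for $\delta$ bounded away from $0$, so I would assume $\delta$ is small. The symbol $m_{\delta,\alpha}$ is supported where $|1-\rho(\xi)|<\delta$; since $\rho$ is $1$-homogeneous and $\Omega$ is a convex domain, $|\nabla\rho|\approx 1$ near $\partial\Omega$, so this region is comparable to a $\delta$-neighbourhood $\mathcal{A}_\delta$ of $\partial\Omega$, on which $|m_{\delta,\alpha}|\le\delta^{\alpha}$.

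The plan is to split $\mathcal{A}_\delta$ into pieces adapted to a near-optimal family of $\delta$-caps. Fix a covering of $\partial\Omega$ by at most $c_\eta\delta^{-\kappa_\Omega-\eta}$ caps $B=B(\ell_B,\delta)$, and for each such $B$ let $S_B\subset\mathcal{A}_\delta$ be the portion of $\mathcal{A}_\delta$ lying over $B$; this $S_B$ is comparable to a $\len(B)\times\delta$ parallelogram with long side parallel to $\ell_B$. Taking a smooth partition of unity $\{\chi_B\}$ subordinate to a bounded-overlap enlargement of $\{S_B\}$ and adapted to these parallelograms, I would write $m_{\delta,\alpha}=\sum_B m_{\delta,\alpha}\chi_B$, so $\mathcal{F}^{-1}m_{\delta,\alpha}=\sum_B K_B$ with $K_B:=\mathcal{F}^{-1}[m_{\delta,\alpha}\chi_B]$. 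By the triangle inequality it then suffices to prove the \emph{uniform single-cap bound} $\|K_B\|_{L^1(\R^2)}\lesssim\delta^{\alpha}$; summing over the $\lesssim_\eta\delta^{-\kappa_\Omega-\eta}$ caps then closes the argument.

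For the single-cap bound the key point is that the $L^1$-norm of an inverse Fourier transform is invariant under invertible affine changes of the frequency variable. Letting $A_B$ be an affine map sending $S_B$ to the unit square, $\|K_B\|_{L^1}=\|\mathcal{F}^{-1}[(m_{\delta,\alpha}\chi_B)\circ A_B^{-1}]\|_{L^1}$, and on the unit square the rescaled symbol equals $\delta^{\alpha}(\beta\circ u_B)(\chi_B\circ A_B^{-1})$, where $u_B(\zeta)=\tfrac{1}{2}\delta^{-1}\bigl(1-\rho(A_B^{-1}\zeta)\bigr)$. The geometric heart of the matter is that the flatness built into the definition of a $\delta$-cap — the boundary arc over $B$ stays within $\delta$ of the supporting line $\ell_B$ — forces $|u_B|\lesssim1$ on the unit square together with suitable control on the derivatives of $u_B$ that occur; combined with the hypothesis $|\beta^{(k)}|\le1$ for $k\le4$ and the chain rule, this exhibits the rescaled symbol as $\delta^{\alpha}$ times a fixed-scale bump whose inverse Fourier transform is integrable over $\R^2$ with norm $\lesssim\delta^{\alpha}$, uniformly in $B$, $\delta$ and $\Omega$. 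This is essentially the content of the relevant lemmas in \cite[\S3]{SZ}, which I would invoke.

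I expect the single-cap estimate to be the main obstacle, for two reasons: $\delta$-caps can have widely varying eccentricity, and the boundary of a general convex domain is only Lipschitz, so $\rho$ cannot be freely differentiated — its second derivative, tied to the curvature of $\partial\Omega$, can be unbounded or concentrate near corners. The remedy is to exploit the $\delta$-flatness of each cap decisively when rescaling, so that only low-order smoothness of the rescaled symbol is actually needed — enough to make the rescaled kernel decay fast enough to lie in $L^1(\R^2)$ — and to arrange the partition of unity, together with a secondary splitting of caps near corners and flat-to-curved transitions if necessary, so that this smoothness is uniformly available. Everything else is bookkeeping: the affine invariance of the $L^1$ kernel norm and the cap count $N(\Omega,\delta)\lesssim_\eta\delta^{-\kappa_\Omega-\eta}$ supply the remaining steps.
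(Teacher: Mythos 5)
The paper offers no proof of this proposition at all: it is imported verbatim from Seeger--Ziesler as \cite[(3.32)]{SZ}, so there is nothing in-paper to match your argument against. Your outline (Young's inequality reducing to $\|\mathcal{F}^{-1}m_{\delta,\alpha}\|_{L^1}\lesssim_\eta\delta^{\alpha-\kappa_\Omega-\eta}$, a decomposition of the $\delta$-neighbourhood of $\partial\Omega$ adapted to a near-optimal family of $\delta$-caps, an affine-rescaled single-piece kernel bound, and the cap count $N(\Omega,\delta)\lesssim_\eta\delta^{-\kappa_\Omega-\eta}$) is indeed the standard route, and it is exactly how the present paper deploys the same machinery elsewhere: its Proposition \ref{prop: kernel estimate} plus the $q=\infty$ case of Lemma \ref{lemma: mixed norm interpolation} is precisely ``partition of unity in $\xi_1$, then \cite[Proposition 3.2]{SZ}, then Young.''

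The one substantive soft spot is your ``uniform single-cap bound'' $\|K_B\|_{L^1(\R^2)}\lesssim\delta^\alpha$ with a partition at cap scale only. The flatness of a $\delta$-cap controls products $(t-s)(\gamma_L'(t)-\gamma_R'(s))\lesssim\delta$, but not the pointwise variation of the slope: near the ends of a cap (or at corners) the slope of the convex boundary can jump by as much as $\delta/d$ at distance $d$ from the endpoint, so after your anisotropic rescaling the symbol is not a fixed-scale bump uniformly over the whole cap, and the naive integration-by-parts bound fails there. This is exactly why Seeger--Ziesler (and \S5.1 of this paper) perform the further dyadic splitting of each interval into $O(\log\delta^{-1})$ subintervals with lengths shrinking geometrically toward the endpoints, verify the flatness condition on the \emph{doubled} subintervals, and obtain $\lesssim\delta^\alpha\log\delta^{-1}$ per piece rather than $\lesssim\delta^\alpha$ per cap. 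Your parenthetical ``secondary splitting if necessary'' is precisely this step and should be made explicit; once it is, the extra $(\log\delta^{-1})^{O(1)}$ factors are absorbed by the $\delta^{-\eta}$ slack and your argument closes as intended.
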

We obtain the $q=\infty$ case of \eqref{eqn: lp bound br} from \eqref{eqn: br dimension bound} with $\eta=\epsilon$.

Therefore, it remains only to prove the $q=3p$ case of \eqref{eqn: lp bound br}. We restate the result below.
\begin{prop}\label{prop: BR 6m bound}
    Let $\Omega_p$ be the convex domain constructed in \S\ref{sec: construction}, and let $m_{\delta,\alpha}$ be the multiplier appearing in the statement of Proposition \ref{prop: main}. Then for all $\epsilon>0$ we have \begin{equation}
        \label{eqn: 6m bound for br}
        \bigg\|\mathcal{F}^{-1}[m_{\delta,\alpha}\widehat{f}]\bigg\|_{L^{3p}(\R^2)}\lesssim_{\epsilon}\delta^{\alpha-\kappa_{\Omega_p}(1/2-1/3p)-\epsilon}\|f\|_{L^{3p}(\R^2)}.
    \end{equation}
\end{prop}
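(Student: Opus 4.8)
The plan is to reduce the $L^{3p}$ bound for the multiplier operator $\mathcal F^{-1}[m_{\delta,\alpha}\widehat f]$ to the two-dimensional decoupling estimate \eqref{eqn: BR decoupling} of Corollary~\ref{cor: br decoupling}, following the standard template that converts a decoupling inequality for a curve into an $L^q$ bound for the associated Bochner--Riesz-type multiplier. The multiplier $m_{\delta,\alpha}$ is supported in a $\delta$-neighbourhood of the boundary $\partial\Omega_p$, which is the image of (a piece of) the parabola over the Cantor set $C^p$ together with a flat piece $\mathfrak G_2$. The flat piece contributes a single cap and is harmless (it can be handled by a trivial estimate or absorbed), so the main work concerns $\mathfrak G_1$. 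First I would decompose $\partial\Omega_p$ into $\delta$-caps using the near-optimal covering $\mathfrak B_\delta=\{B_I:I\in\I(\delta)\}$ constructed in \S\ref{sec: dimension and additive energy}, and correspondingly write $m_{\delta,\alpha}=\sum_{I\in\I(\delta)}m_{\delta,\alpha}^I$ where each $m_{\delta,\alpha}^I$ is (essentially) supported in a $|I|\times|I|^2$ parallelogram $\theta_I$ comparable to the cap $B_I$; here one uses that each removed or smallest-scale interval $I$ has $|I|\approx\delta^{1/2}$ up to the harmless factors $p/4$, so that the curvature over $I$ produces vertical thickness $\lesssim\delta$, matching the $\delta$-neighbourhood scale.

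The core estimate is then: apply Corollary~\ref{cor: br decoupling} with the functions $f_{\theta_J}:=\mathcal F^{-1}[m_{\delta,\alpha}^J\widehat f]$ to obtain
\begin{equation*}
\big\|\mathcal F^{-1}[m_{\delta,\alpha}\widehat f]\big\|_{L^{3p}(\R^2)}\lesssim_\epsilon \delta^{-4\epsilon/5}\Big(\sum_{J\in\I(\delta)}\big\|\mathcal F^{-1}[m_{\delta,\alpha}^J\widehat f]\big\|_{L^{3p}(\R^2)}^2\Big)^{1/2}.
\end{equation*}
Next, for each single cap $J$ one estimates $\|\mathcal F^{-1}[m_{\delta,\alpha}^J\widehat f]\|_{L^{3p}}$ by interpolating a trivial $L^2$ bound (Plancherel: the multiplier has size $\lesssim\delta^\alpha$, giving $\|\mathcal F^{-1}[m_{\delta,\alpha}^J\widehat f]\|_{L^2}\lesssim\delta^\alpha\|P_Jf\|_{L^2}$, where $P_J$ projects onto a slab around $\theta_J$) against an $L^\infty$ bound coming from the fact that $\mathcal F^{-1}[m_{\delta,\alpha}^J]$ is $L^1$-bounded up to $\delta^\alpha$ times the area factor $|J|\cdot\delta\approx\delta^{3/2}$ — more carefully, one uses that $m^J_{\delta,\alpha}$ restricted to $\theta_J$ after an affine rescaling becomes a bump adapted to a unit-scale plate, so its inverse Fourier transform is an $L^1$-normalised bump times $\delta^\alpha|J|\delta$, hence $\|\mathcal F^{-1}[m^J_{\delta,\alpha}\widehat f]\|_{L^{3p}}\lesssim \delta^\alpha|J|^{1-2/3p}\delta^{1/2}\cdot\delta^{-(1-2/3p)/2}$-type factors; the upshot after the interpolation is a single-cap bound of the form $\|\mathcal F^{-1}[m^J_{\delta,\alpha}\widehat f]\|_{L^{3p}}\lesssim\delta^\alpha\delta^{-\theta}\|P_Jf\|_{L^{3p}}$ for the appropriate $\theta$ reflecting the $3p$-interpolation exponent on a plate of dimensions $\delta^{1/2}\times\delta$. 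Then I would feed this back, use the almost-orthogonality of the slabs $\{P_J\}_{J\in\I(\delta)}$ (finite overlap, since $\I(\delta)$ is a near-optimal covering) together with $\ell^{3p}\hookrightarrow\ell^2$ being the wrong direction — so instead one keeps the $\ell^2$ sum and reverse-engineers: $\big(\sum_J\|P_Jf\|_{L^{3p}}^2\big)^{1/2}\lesssim (\#\I(\delta))^{1/2-1/3p}\|f\|_{L^{3p}}$ by Hölder in the cap index plus the square-function/Littlewood--Paley bound for the (finitely overlapping) slab projections. Since $\#\I(\delta)\approx N^{K(\delta)}\approx_N\delta^{-1/p}=\delta^{-\kappa_{\Omega_p}}$ by \eqref{eqn: N K(delta)} and \eqref{eqn: dimension of omega}, collecting all the powers of $\delta$ yields $\delta^{\alpha}\cdot\delta^{-\kappa_{\Omega_p}(1/2-1/3p)}\cdot\delta^{-O(\epsilon)}$, which is \eqref{eqn: 6m bound for br} after relabelling $\epsilon$.

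I expect the main obstacle to be the bookkeeping in the single-cap estimate and the passage from an $\ell^2$-decoupled sum back to $\|f\|_{L^{3p}}$ without losing powers of $\delta$ beyond $\delta^{-\kappa_{\Omega_p}(1/2-1/3p)-\epsilon}$: one must be careful that the trivial $L^2$ and $L^\infty$ inputs on each plate are genuinely sharp up to $\delta^\epsilon$-losses, that the affine rescaling sending $\theta_J$ to a unit plate does not distort $L^{3p}$ norms, and that the finite-overlap of the slab projections $P_J$ (which is where the near-optimality of the covering $\mathfrak B_\delta$ and the separation property (ii) of Lemma~\ref{lemma: *} are used) is enough to run a Littlewood--Paley argument in two dimensions. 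A secondary technical point is dealing with the weights $w_Q$ and the fact that the caps live at slightly different scales (removed intervals have length $\gtrsim\delta^{1/2}$ but not exactly $\delta^{1/2}$); this is precisely why Corollary~\ref{cor: br decoupling} was stated for the full family $\I(\delta)$ rather than a single level $\I_k$, so it should slot in directly. Modulo these routine-but-delicate estimates, the proof is a clean ``decoupling $+$ single-scale interpolation $+$ count the caps'' argument, with all the genuinely new input already packaged into Corollary~\ref{cor: br decoupling}.
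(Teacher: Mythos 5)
Your skeleton --- decompose $m_{\delta,\alpha}$ into pieces adapted to the caps over $\I(\delta)$, apply the decoupling estimate of Corollary \ref{cor: br decoupling} to those pieces, pay the cap count $\mathrm{card}(\I(\delta))^{1/2-1/3p}\approx\delta^{-\kappa_{\Omega_p}(1/2-1/3p)}$ by H\"older, and close with a uniform per-cap $L^{3p}$ bound --- is exactly the paper's argument, and your slab-projection/square-function step is an acceptable variant of the paper's route (H\"older from $\ell^2$ to $\ell^{3p}$ followed by Lemma \ref{lemma: mixed norm interpolation}, i.e.\ interpolation of $\ell^2L^2$ against $\ell^\infty L^\infty$). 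The genuine gap is the per-cap estimate. For the exponent $\alpha-\kappa_{\Omega_p}(1/2-1/3p)-\epsilon$ to survive, the single-cap operator norm on $L^{3p}$ must be $\lesssim\delta^{\alpha}$ up to logarithmic (or $\delta^{-O(\epsilon)}$) losses; your sketch leaves the loss $\delta^{-\theta}$ unspecified, and the heuristic behind it is off: the $L^1$ norm of the localized kernel is not ``$\delta^\alpha$ times the area factor $|J|\cdot\delta$'' (that is the size of the multiplier's integral, i.e.\ the kernel's sup), but rather $\|K^J_{\delta,\alpha}\|_{L^1(\R^2)}\lesssim\delta^\alpha\log\delta^{-1}$ as in Proposition \ref{prop: kernel estimate}; once one has this, Young's inequality gives the $L^q\to L^q$ bound for every $q$ directly, with no interpolation on a plate and no power loss. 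Any genuine power $\delta^{-\theta}$ at this step would destroy the claimed exponent, so this cannot be left as ``the appropriate $\theta$''.

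Moreover, that kernel estimate is not automatic for cutoffs adapted to a whole interval $I\in\I(\delta)$. The affine-rescaling-to-a-unit-plate picture is accurate only over removed intervals, where the boundary is a line segment; over $I\in\I_{K(\delta)}$ the boundary is a curved Cantor-type arc (flat only in the quantitative sense \eqref{eqn: gamma' bound}), and at endpoints of parent intervals $\gamma'$ jumps by amounts that can be of order one, so a smooth cutoff whose support straddles such a corner fails the flatness hypothesis of the Seeger--Ziesler kernel estimate. The paper resolves this with the Whitney-type partition of unity $\mathcal{J}(\delta)$: each $I\in\I(\delta)$ is subdivided into $O(\log\delta^{-1})$ subintervals $I_j$ with either $2I_j\subset I$ or $|I_j|\lesssim\delta$, the condition \eqref{eqn: gamma' bound} is checked on each $I$ (linearity over removed intervals, $|I|<\delta^{1/2}$ for $I\in\I_{K(\delta)}$), and \cite[Proposition 3.2]{SZ} is applied to each piece; the resulting $\log\delta^{-1}$ factors are recouped by a Cauchy--Schwarz in $j$ and absorbed into the $\delta^{-\epsilon}$ slack. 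Without this subdivision (or an equivalent treatment of the corners and the curved small-scale caps), your uniform per-cap bound --- and hence the final estimate \eqref{eqn: 6m bound for br} --- is not justified.
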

    In order to prove Proposition \ref{prop: BR 6m bound}, we first need to localise the multiplier $m_{\delta,\alpha}$. 
    \subsection{A partition of unity}
    Following an argument of Seeger--Ziesler \cite[p.~664]{SZ}, we  decompose each $I\in\I(\delta)$ into $O(\log\delta^{-1})$ many subintervals, so that consecutive subintervals have comparable lengths. To this end, let $j_I$ be the smallest natural number satisfying $2^{-j_I}|I|< \delta$, and define the points
$$a_{I,j}:=\begin{cases}
   c_{I}, & j=0,\\
   a_{I,j-1}+2^{-j-1}|I|, & j=1,2,\dots,j_I-1,\\
    r_I,& j=j_I,\\
   a_{I,j+1}-2^{-j-1}|I|,& j=-1,,-2,\dots,-j_I-1,\\
   l_I,& j=-j_I.
\end{cases}$$ We define the subintervals $I_{j}:=[a_{I,j-1},a_{I,j}]$ for $|j|=0,1,\dots,j_I$. 
Then $$I=\bigcup_{|j|\leq j_I}I_{j},\quad\text{and}\quad [-1/2,1/2]=\bigcup_{I\in\I(\delta)}\bigcup_{|j|\leq{j_I}}I_{j}.$$
Let $\mathcal{J}(\delta):=\{I_{j}:I\in\I(\delta),|j|\leq j_I\}$ denote the collection of these subintervals. It is clear by the construction, that for consecutive intervals $J_1,J_2\in\J$, we have $$\frac{1}{2}\leq|J_1|\cdot |J_2|^{-1}\leq 2.$$
Let $\beta\in C^\infty_0(-1/2,1/2)$ satisfy $0\leq\beta(t)\leq 1$ for all $t\in[-1/2,1/2]$ and $\beta(t)=1$ for all $t\in [-1/4,1/4]$. If $2J$ denotes the interval concentric to $J$ with twice the length, define $\bar\beta_J(t):=\beta(|2J|^{-1}(t-c_J))$, where $c_J$ denotes the centre of $J\in\J$. Observe that $$1\leq\sum_{J\in\J}\bar\beta_J(t)\leq 4\qquad\text{for all}\qquad t\in [-1/2,1/2].$$ Define $\tilde\beta_J$ as $$\tilde\beta_J:=\frac{\bar\beta_J}{\sum_{I\in\J}\bar\beta_I}.$$ 
Then, by construction $\sum_{J\in\J}\tilde\beta_J=1$, and there exists an absolute constant $c>0$ such that for $k=0,\dots,4$ we have 
\begin{equation*}
      \|\tilde\beta_J^{(k)}\|_{L^\infty(\R)}\leq c\cdot |J|^{-k}\qquad\text{for all}\qquad J\in\J.
\end{equation*}
Finally, define $\beta_J:=c^{-1}\cdot\tilde\beta_J$ for all $J\in\J$.
Define $$\mathcal{B}:=\bigg\{\beta\in C^2_0(-1/2,1/2):\|\beta^{(k)}\|_{L^\infty(\R)}\leq 1\;\text{for}\;k=0,\dots,4\bigg\}.$$ Then each $\beta_J$ is of the form \begin{equation}\label{eqn: beta}\beta_J(t)=\beta(|J|^{-1}(t-c_J))\qquad\text{for some}\qquad \beta\in\mathcal{B}.\end{equation}
 Let us denote $K_{\delta,\alpha}:=\mathcal{F}^{-1}(m_{\delta,\alpha})$, so that $\mathcal{F}^{-1}[m_{\delta,\alpha}\widehat{f}]=K_{\delta,\alpha}*f$.
We can write $$K_{\delta,\alpha}=c^{-1}\sum_{J\in\J}K^J_{\delta,\alpha},\quad\text{where}\quad (K_{\delta,\alpha}^J)\;\widehat{}\;(\xi):=m_{\delta,\alpha}(\xi)\cdot\beta_J(\xi_1).$$
The following result is a consequence of \cite[Proposition 3.2]{SZ}.
\begin{prop}
    \label{prop: kernel estimate}
    For $K_{\delta,\alpha}^J$ as defined above we have $$\|K^J_{\delta,\alpha}\|_{L^1(\R^2)}\lesssim \delta^{\alpha}(\log{\delta^{-1}})\quad\text{uniformly for all}\quad J\in\J.$$
\end{prop}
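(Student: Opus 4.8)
The plan is to obtain this directly from the kernel estimate \cite[Proposition 3.2]{SZ}, so the substance of the proof is the verification that the domain $\Omega_p$, the caps $\{B_I : I\in\I(\delta)\}$ from \S\ref{sec: dimension and additive energy}, and the partition of unity $\{\beta_J : J\in\J\}$ constructed just above all fit the hypotheses of that result. Recall that \cite[Proposition 3.2]{SZ}, applied to an arbitrary convex domain, asserts that the Bochner--Riesz multiplier localized in the first frequency variable to a single subinterval $J$ of a $\delta$-cap by a cutoff of the form $\beta_J(\xi_1)$ with $\beta_J$ as in \eqref{eqn: beta} produces a convolution kernel whose $L^1$ norm is $\lesssim\delta^\alpha\log\delta^{-1}$; the logarithm is an artefact of a van der Corput/stationary phase argument carried out with no curvature lower bound on $\partial\Omega$ and which must also absorb the corners of $\partial\Omega$. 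Thus once we check that our $\{B_I\}$ and $\{\beta_J\}$ satisfy the conditions of \cite[Proposition 3.2]{SZ}, applying it to each $J\in\J$ immediately gives $\|K^J_{\delta,\alpha}\|_{L^1(\R^2)}\lesssim\delta^\alpha\log\delta^{-1}$ with constant independent of $J$ and $\delta$, which is the claim.

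First I would describe the Fourier support of $K^J_{\delta,\alpha}$. Since $m_{\delta,\alpha}$ is supported in the $O(\delta)$-neighbourhood of $\partial\Omega_p$ and $\beta_J(\xi_1)$ confines $\xi_1$ to $2J$, the function $(K^J_{\delta,\alpha})\;\widehat{}\;$ lives on a plank $P_J$ of dimensions $\sim|J|\times\delta$ comparable to a $\delta$-neighbourhood of the arc of $\mathfrak{G}_1$ over $2J$ (together with, when $2J$ also meets the top segment $\mathfrak{G}_2$, a genuinely flat piece, which only makes matters easier). The two structural inputs needed are: (a) by the condition $I^-,I^+\in\I(N;p)$ and the cap analysis of \S\ref{sec: dimension and additive energy}, each $J\in\J$ lies inside a single $\delta$-cap $B_I$ with $I\in\I(\delta)$, so over $2J$ the curve $\mathfrak{G}_1=\{(t,\gamma(t))\}$ differs from an affine function by $O(\delta)$ after the natural rescaling; and (b) consecutive intervals of $\J$ have comparable lengths and each $\beta_J$ has the rescaled form \eqref{eqn: beta} with uniformly bounded $C^4$ data. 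Together (a) and (b) say precisely that the affine map normalizing $P_J$ to a unit box transforms $\delta^{-\alpha}m_{\delta,\alpha}(\xi)\,\beta_J(\xi_1)$ into a symbol bounded with bounded derivatives up to the order \cite[Proposition 3.2]{SZ} requires, uniformly in $J$ and $\delta$. Both properties were arranged for exactly this purpose in \S\ref{sec: dimension and additive energy} and in the construction of $\J$.

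I expect the one genuinely delicate point — and the reason the estimate is attributed to \cite{SZ} rather than proved from scratch — to be the uniform control of the $\xi_1$-derivatives of the Minkowski functional $\rho$ on the plank $P_J$: unlike for the parabola, $\gamma$ here is only the convex envelope of the parabola points over the Cantor set $C^p$, so it has corners at the removed intervals and no curvature lower bound, and the derivatives of $1-\rho$ on $P_J$ degenerate near those corners. This is exactly the phenomenon that \cite[Proposition 3.2]{SZ} is engineered to handle for general convex domains, and since our family of caps $\{B_I\}$ and the associated partition $\{\beta_J\}$ were constructed (following the Seeger--Ziesler scheme on \cite[p.~664]{SZ}, in particular the comparability of lengths of neighbouring subintervals) so as to meet its hypotheses verbatim, the remaining verifications are routine and the stated $L^1$ bound follows.
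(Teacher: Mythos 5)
Your proposal is correct and follows essentially the same route as the paper: both reduce the bound to \cite[Proposition 3.2]{SZ} and verify its hypotheses, using that the multiplier has the form \eqref{eqn: beta} with uniformly bounded $C^4$ bump data and that the boundary graph $\gamma$ is nearly flat over each $2J$. The only step you gloss over is the precise condition \cite[(3.12)]{SZ}, namely $(t-s)(\gamma_L'(t)-\gamma_R'(s))<8\delta$ for $s<t$ in $2J$, which the paper checks directly ($\gamma$ affine on removed intervals, and $\gamma_L'(t)-\gamma_R'(s)\leq 2|I|$ with $|I|<\delta^{1/2}$ for $I\in\I_{K(\delta)}$), including the edge case $2I_j\not\subset I$, where one uses $|I_j|\leq 4\delta$ and the global slope bound $\gamma_L'-\gamma_R'\leq 2$ rather than the cap containment you invoke.
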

\begin{proof}
    Let us verify the hypotheses of \cite[Proposition 3.2]{SZ}.
    First, observe that our multiplier has the form $$(K_{\delta,\alpha}^J)\;\widehat{}\;(\xi)=\delta^\alpha\beta_1\bigg(\frac{\delta^{-1}}{2}\big(1-\rho(\xi)\big)\bigg)\cdot\beta_2\big(|J|^{-1}(\xi_1-c_J)\big)\quad\text{for}\quad \beta_1,\beta_2\in\mathcal{B},$$ where we recall $$\mathcal{B}=\bigg\{\beta\in C^2_0(-1/2,1/2):\|\beta^{(k)}\|_{L^\infty(\R)}\leq 1\;\text{for}\;k=0,\dots,4\bigg\}.$$
    Thus, the multiplier $\delta^{-\alpha}(K_{\delta,\alpha}^J)\;\widehat{}\;$ has the same form as \cite[(3.14)]{SZ}.
    
    Next, observe that for all $I\in\I(\delta)$ we have \begin{equation}
        \label{eqn: gamma' bound}
        (t-s)(\gamma_L'(t)-\gamma_R'(s))< 2\delta\quad\text{for all}\quad s,t\in I,\;s<t.
    \end{equation}
    Clearly for each $I\in\Ir$, we have $\gamma'(t)=r_I+l_I$ for all $l_I<t<r_I$, so that \eqref{eqn: gamma' bound} holds. On the other hand, for $I\in\I_{K(\delta)}$ we have $$\gamma_L'(t)-\gamma_R'(s)\leq\gamma_L'(r_I)-\gamma_R'(l_I)\leq 2(r_I-l_I),$$ which implies \eqref{eqn: gamma' bound} since $|I|=r_I-l_I<\delta^{1/2}$.
    Now for each $I\in\I(\delta)$ and $|j|\leq j_I$, notice that one of the following must occur. $$\text{Either}\quad 2I_j\subset I,\quad\text{or}\quad |I_j|\leq 4\delta.$$ Since $\gamma_L'(t)-\gamma_R'(s)\leq 2$ for all $s<t,s,t\in [-1/2,1/2]$, it follows that for all $J\in\J$ we have
\begin{equation}
        \label{eqn: gamma' bound 2}
        (t-s)(\gamma_L'(t)-\gamma_R'(s))< 8\delta\quad\text{for all}\quad s<t, s,t\in 2J.
    \end{equation} 
    Thus, the condition \cite[(3.12)]{SZ} is satisfied. Finally, we note that by construction, each $J\in\J$ satisfies $|J|\geq \delta/2$.
    
    Therefore, we can apply \cite[Proposition 3.2]{SZ} with $2^{-\ell}=\delta$ and $M=0$. The desired bound follows from \cite[(3.16)]{SZ}, applied to the multipliers $\delta^{-\alpha}(K_{\delta,\alpha}^J)\;\widehat{}\;$.
\end{proof}
We are ready to prove the estimate \eqref{eqn: 6m bound for br}.
\subsection{Proof of Proposition \ref{prop: BR 6m bound}}
 By Corollary \ref{cor: br decoupling}, we have the following decoupling estimate \begin{equation}
        \label{eqn: 6m decoupling step 1}
        \|K_{\delta,\alpha}*f\|_{L^{3p}(\R^2)}\lesssim_\epsilon\delta^{-\frac{4\epsilon}{5}}\bigg(\sum_{I\in\I(\delta)}\|K_{\delta,\alpha}^I*f\|^2_{L^{3p}(\R^2)}\bigg)^{1/2},
    \end{equation}
    where $K^I_{\delta,\alpha}:=\sum_{|j|\leq j_I}K^{I_j}_{\delta,\alpha}$.
 Then, for each $I\in\I(\delta)$, we can decouple the kernels $K_{\delta,\alpha}^{I_j}$ using Cauchy--Schwarz as follows 
 \begin{equation*}
 \|K_{\delta,\alpha}^I*f\|_{L^{3p}(\R^2)}\lesssim(\log\delta^{-1})^{1/2}\bigg(\sum_{|j|\leq j_I}\|K_{\delta,\alpha}^{I_j}*f\|^2_{L^{3p}(\R^2)}\bigg)^{1/2}, \end{equation*}
 since $j_I\lesssim\log{\delta^{-1}}$. Combined with \eqref{eqn: 6m decoupling step 1}, this gives \begin{equation*}
     \|K_{\delta,\alpha}*f\|_{L^{3p}(\R^2)}\lesssim_\epsilon\delta^{-\frac{4\epsilon}{5}}(\log\delta^{-1})^{1/2}\bigg(\sum_{J\in\J}\|K_{\delta,\alpha}^{J}*f\|^2_{L^{3p}(\R^2)}\bigg)^{1/2}.
 \end{equation*}
 
 Using H\"older's inequality, and the fact $\text{card}(\I(\delta))\approx \delta^{-\kappa_{\Omega_p}}$, we get $$\bigg(\sum_{J\in\J}\|K_{\delta,\alpha}^{J}*f\|^2_{L^{3p}(\R^2)}\bigg)^{1/2}\lesssim_{\epsilon,\eta}\delta^{-\kappa_{\Omega_p}(\frac{1}{2}-\frac{1}{3p})-\frac{4\epsilon}{5}-\eta}\bigg(\sum_{J\in\J}\|K_{\delta,\alpha}^{J}*f\|^{3p}_{L^{3p}(\R^2)}\bigg)^{1/3p},$$
which, upon choosing $\eta>0$ sufficiently small, gives \begin{equation}
    \label{eqn: 6m decoupling step 2}
 \|K_{\delta,\alpha}*f\|_{L^{3p}(\R^2)}\lesssim_\epsilon\delta^{-\kappa_{\Omega_p}(\frac{1}{2}-\frac{1}{3p})-\frac{5\epsilon}{6}}\bigg(\sum_{J\in\J}\|K_{\delta,\alpha}^{J}*f\|^{3p}_{L^{3p}(\R^2)}\bigg)^{1/3p}.\end{equation} 
In order to estimate the right-hand term on the above display, we use the following lemma.
\begin{lemma}
     \label{lemma: mixed norm interpolation}
     For all $2\leq q\leq\infty$, we have the estimate $$\bigg(\sum_{J\in\J}\|K_{\delta,\alpha}^{J}*f\|^{q}_{L^{q}(\R^2)}\bigg)^{1/q}\lesssim \delta^{\alpha}(\log\delta^{-1})\cdot\|f\|_{L^q(\R^2)}.$$ 
 \end{lemma}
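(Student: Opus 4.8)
The plan is to prove the estimate by interpolating between the endpoint exponents $q=2$ and $q=\infty$. The key preliminary observation is that the left-hand side is just the norm of a single function in a genuine $L^q$ space: setting $G(x,J):=(K^J_{\delta,\alpha}*f)(x)$ on the product measure space $\R^2\times\mathcal{J}(\delta)$, where $\mathcal{J}(\delta)$ carries the counting measure, we have
\[
\Big(\sum_{J\in\mathcal{J}(\delta)}\|K^J_{\delta,\alpha}*f\|_{L^q(\R^2)}^q\Big)^{1/q}=\|G\|_{L^q(\R^2\times\mathcal{J}(\delta))}.
\]
Thus $f\mapsto G$ is a linear operator between honest $L^q$ spaces, so once the two endpoint bounds are in hand the ordinary Riesz--Thorin theorem applies directly, with no vector-valued machinery needed.

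For the endpoint $q=\infty$, Young's inequality gives $\|K^J_{\delta,\alpha}*f\|_{L^\infty(\R^2)}\le\|K^J_{\delta,\alpha}\|_{L^1(\R^2)}\|f\|_{L^\infty(\R^2)}$, and Proposition~\ref{prop: kernel estimate} bounds $\|K^J_{\delta,\alpha}\|_{L^1(\R^2)}\lesssim\delta^\alpha\log\delta^{-1}$ uniformly in $J$; taking the supremum over $J$ yields $\|G\|_{L^\infty(\R^2\times\mathcal{J}(\delta))}\lesssim\delta^\alpha(\log\delta^{-1})\|f\|_{L^\infty(\R^2)}$. For the endpoint $q=2$, I would use Plancherel: since $\widehat{K^J_{\delta,\alpha}}(\xi)=m_{\delta,\alpha}(\xi)\beta_J(\xi_1)$ with $|m_{\delta,\alpha}|\le\delta^\alpha$ (because $\|\beta\|_{L^\infty}\le1$),
\[
\sum_{J\in\mathcal{J}(\delta)}\|K^J_{\delta,\alpha}*f\|_{L^2(\R^2)}^2=\int_{\R^2}|m_{\delta,\alpha}(\xi)|^2\Big(\sum_{J\in\mathcal{J}(\delta)}|\beta_J(\xi_1)|^2\Big)|\widehat f(\xi)|^2\,d\xi .
\]
The inner sum is $O(1)$: each $\beta_J$ satisfies $0\le\beta_J\lesssim1$, and the partition-of-unity property gives $\sum_J\beta_J\lesssim1$, whence $\sum_J|\beta_J|^2\le(\sup_J\beta_J)(\sum_J\beta_J)\lesssim1$. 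Hence $\|G\|_{L^2(\R^2\times\mathcal{J}(\delta))}\lesssim\delta^\alpha\|f\|_{L^2(\R^2)}$, with in fact no logarithmic loss.

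It remains to interpolate. Riesz--Thorin applied to $f\mapsto G$ between the $L^2\to L^2$ bound (of size $\lesssim\delta^\alpha$) and the $L^\infty\to L^\infty$ bound (of size $\lesssim\delta^\alpha\log\delta^{-1}$) gives, for $2\le q\le\infty$, an $L^q\to L^q$ bound of size $\lesssim(\delta^\alpha)^{2/q}(\delta^\alpha\log\delta^{-1})^{1-2/q}=\delta^\alpha(\log\delta^{-1})^{1-2/q}\le\delta^\alpha\log\delta^{-1}$, where the last step uses $\log\delta^{-1}\ge1$ for $0<\delta<1/2$. Unwinding the definition of $G$ recovers the claimed inequality. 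I do not expect any real obstacle; the only steps requiring a little care are the identification of the mixed $\ell^qL^q$ norm with an honest $L^q$ norm so that scalar interpolation applies, and the (routine) verification that the partition-of-unity functions $\beta_J$ have uniformly $\ell^2$-summable restrictions in the frequency variable.
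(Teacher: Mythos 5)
Your proof is correct and follows essentially the same route as the paper: the $q=2$ endpoint via Plancherel and the uniform bound $\sum_J\beta_J\lesssim 1$, the $q=\infty$ endpoint via Young's inequality and Proposition~\ref{prop: kernel estimate}, and interpolation in between. Your identification of the $\ell^qL^q$ quantity with an honest $L^q$ norm on $\R^2\times\mathcal{J}(\delta)$ with counting measure is simply an explicit justification of the mixed-norm interpolation the paper invokes.
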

Applying Lemma \ref{lemma: mixed norm interpolation} with $q=3p$ in \eqref{eqn: 6m decoupling step 2} yields  
 \begin{equation}
     \|K_{\delta,\alpha}*f\|_{L^{3p}(\R^2)}\lesssim_\epsilon\delta^{\alpha-\kappa_{\Omega_p}(1/2-1/3p)-\epsilon}\|f\|_{L^{3p}(\R^2)},
 \end{equation}
 which is the desired estimate. Thus, it only remains to prove Lemma \ref{lemma: mixed norm interpolation}.

\begin{proof}[Proof of Lemma \ref{lemma: mixed norm interpolation}]
    We prove this by mixed-norm interpolation with the endpoints $q=2$ and $q=\infty$.  
    
   \medskip\noindent \underline{$q=2:$} By Plancherel's theorem $$\|K_{\delta,\alpha}^J*f\|_{L^2(\R^2)}=\big\|m_{\delta,\alpha}(\xi)\beta_J(\xi_1)\widehat{f}(\xi)\big\|_{L^2(\R^2)}.$$ Since $$m_{\delta,\alpha}(\xi)=\delta^\alpha\beta\bigg(\frac{\delta^{-1}}{2}\big(1-\rho(\xi)\big)\bigg)\quad\text{with}\quad \beta\in\mathcal{B},$$ it follows that
$$\sum_{J\in\J}\|K_{\delta,\alpha}^{J}*f\|^{2}_{L^{2}(\R^2)}\leq\delta^{2\alpha}\sum_{J\in\J}\big\|\beta_J(\xi_1)\widehat{f}(\xi)\big\|_{L^2(\R^2)}^2.$$ By interchanging the order of norms we get $$\sum_{J\in\J}\big\|\beta_J(\xi_1)\widehat{f}(\xi)\big\|_{L^2(\R^2)}^2=\bigg\|\bigg(\sum_{J\in\J}|\beta_J(\xi_1)\widehat{f}(\xi)|^2\bigg)^{1/2}\bigg\|_{L^2(\R^2)}^2.$$ Using the nesting of $\ell^q$ norms and the bound $$\sum_{J\in\J}\beta_J(\xi_1)\lesssim 1,$$ we obtain the desired $\ell^2 L^2$ bound.
   
\medskip\noindent
    \underline{$q=\infty:$} By Young's convolution inequality we can bound the $\ell^\infty L^\infty$ norm as $$\max_{J\in\J}\|K^{J}_{\delta,\alpha}*f\|_{L^\infty(\R^2)}\leq\max_{J\in\J}\|K^{J}_{\delta,\alpha}\|_{L^1(\R^2)}\cdot\|f\|_{L^\infty(\R^2)}.$$
    But by Proposition \ref{prop: kernel estimate} we have $$\max_{J\in\J}\|K^{J}_{\delta,\alpha}\|_{L^1(\R^2)}\lesssim\delta^{\alpha}(\log{\delta^{-1}}),$$ which proves the $q=\infty$ estimate.
\end{proof}
\section{Future directions}
There are a number of relevant questions that could be explored in the future:
Theorem \ref{thm: lambda p main} shows that for each $\kappa\in (0,\frac{1}{2})$ there is a domain $\Omega$ of dimension $\kappa_\Omega=\kappa$ for which Theorem \ref{thm: seeger--ziesler} is not sharp. It is remarked in \cite{SZ} that Theorem \ref{thm: seeger--ziesler} is sharp when $\partial\Omega$ is $C^2$. For $C^2$ domains, Taylor's theorem shows that $\kappa_\Omega=\frac{1}{2}$. Thus Theorem \ref{thm: seeger--ziesler} is sharp for a large class of domains of dimension $\frac{1}{2}$. It is, therefore, natural to ask if Theorem \ref{thm: seeger--ziesler} is sharp for all domains of dimension $\frac{1}{2}$. 

Although Theorem \ref{thm: seeger--ziesler} is clearly sharp for domains of dimension $0$, we can ask a different question. Consider the ball multiplier $B^0_\Omega$ defined as $$(B^0_\Omega f)\;\widehat{}\;:=\chi_\Omega\widehat{f}.$$ When $\Omega$ is the Euclidean ball, it was famously shown by C. Fefferman \cite{fefferman_ball_multiplier} that $B^0_\Omega$ is unbounded on $L^p(\R^2)$ for all $p\neq 2$. Using Bateman's \cite{bateman} characterisation of planar sets of directions which admit Kakeya sets, along with Fefferman's \cite{fefferman_ball_multiplier} argument for the Euclidean ball, Cladek \cite{cladek_BR} showed that $B^0_\Omega$ is unbounded on $L^p(\R^2)$ for $p\neq 2$ whenever $\kappa_\Omega>0$. It would be interesting to characterise all domains satisfying $\kappa_\Omega=0$\footnote{Simple examples show that $\kappa_\Omega=0$ is not sufficient for this.} for which $B^0_\Omega$ is bounded on $L^p(\R^2)$ for $p\neq 2$. This would include polygonal domains (\textit{cf}. \cite{cordoba_polygon}) and also certain domains with countably infinite sides (\textit{cf.} \cite{cordoba--fefferman_dyadic_polygon}). 

As remarked in \cite{cladek_BR}, the bounds obtained in Theorem \ref{thm: main} could be further improved from refined $L^p$ estimates for Nikodym maximal functions with restricted sets of directions.  
\bibliographystyle{plain}
\bibliography{Citation.bib}

\begin{thebibliography}{10}

\bibitem{bateman}
Michael Bateman.
\newblock Kakeya sets and directional maximal operators in the plane.
\newblock {\em Duke Math. J.}, 147(1):55--77, 2009.

\bibitem{bc}
R.~C. Bose and S.~Chowla.
\newblock Theorems in the additive theory of numbers.
\newblock {\em Comment. Math. Helv.}, 37:141--147, 1962/63.

\bibitem{Bourgain_lambdap}
J.~Bourgain.
\newblock Bounded orthogonal systems and the {$\Lambda(p)$}-set problem.
\newblock {\em Acta Math.}, 162(3-4):227--245, 1989.

\bibitem{bourgain--demeter}
Jean Bourgain and Ciprian Demeter.
\newblock The proof of the {$l^2$} decoupling conjecture.
\newblock {\em Ann. of Math. (2)}, 182(1):351--389, 2015.

\bibitem{chang}
Alan Chang, Jaume de~Dios~Pont, Rachel Greenfeld, Asgar Jamneshan, Zane~Kun Li, and Jos\'{e} Madrid.
\newblock Decoupling for fractal subsets of the parabola.
\newblock {\em Math. Z.}, 301(2):1851--1879, 2022.

\bibitem{cladek_BR}
Laura Cladek.
\newblock New ${L}^p$ bounds for {B}ochner-{R}iesz multipliers associated to convex planar domains with rough boundary.
\newblock Preprint: \verb+ arXiv:1509.05106+, 2015.

\bibitem{cordoba_polygon}
A.~C\'{o}rdoba.
\newblock The multiplier problem for the polygon.
\newblock {\em Ann. of Math. (2)}, 105(3):581--588, 1977.

\bibitem{cordoba--fefferman_dyadic_polygon}
A.~Cordoba and R.~Fefferman.
\newblock On the equivalence between the boundedness of certain classes of maximal and multiplier operators in {F}ourier analysis.
\newblock {\em Proc. Nat. Acad. Sci. U.S.A.}, 74(2):423--425, 1977.

\bibitem{Cordoba}
Antonio C\'ordoba.
\newblock A note on {B}ochner-{R}iesz operators.
\newblock {\em Duke Math. J.}, 46(3):505--511, 1979.

\bibitem{fefferman_ball_multiplier}
Charles Fefferman.
\newblock The multiplier problem for the ball.
\newblock {\em Ann. of Math. (2)}, 94:330--336, 1971.

\bibitem{fefferman_biorthogonality}
Charles Fefferman.
\newblock A note on spherical summation multipliers.
\newblock {\em Israel J. Math.}, 15:44--52, 1973.

\bibitem{shortdecoupling}
Shaoming Guo, Zane~Kun Li, Po-Lam Yung, and Pavel Zorin-Kranich.
\newblock A short proof of {$\ell^2$} decoupling for the moment curve.
\newblock {\em Amer. J. Math.}, 143(6):1983--1998, 2021.

\bibitem{laba--wang}
Izabella \L~aba and Hong Wang.
\newblock Decoupling and near-optimal restriction estimates for {C}antor sets.
\newblock {\em Int. Math. Res. Not. IMRN}, (9):2944--2966, 2018.

\bibitem{Lindstrom}
Bernt Lindstr\"{o}m.
\newblock {$B_h[g]$}-sequences from {$B_h$}-sequences.
\newblock {\em Proc. Amer. Math. Soc.}, 128(3):657--659, 2000.

\bibitem{mian--chowla}
Abdul~Majid Mian and S.~Chowla.
\newblock On the {$B_2$} sequences of {S}idon.
\newblock {\em Proc. Nat. Acad. Sci. India Sect. A}, 14:3--4, 1944.

\bibitem{o'bryant}
Kevin O'Bryant.
\newblock A complete annotated bibliography of work related to {S}idon sequences.
\newblock {\em Electron. J. Combin.}, DS11(Dynamic Surveys):39, 2004.

\bibitem{plagne}
Alain Plagne.
\newblock Recent progress on finite {$B_h[g]$} sets.
\newblock In {\em Proceedings of the {T}hirty-second {S}outheastern {I}nternational {C}onference on {C}ombinatorics, {G}raph {T}heory and {C}omputing ({B}aton {R}ouge, {LA}, 2001)}, volume 153, pages 49--64, 2001.

\bibitem{rudin}
Walter Rudin.
\newblock Trigonometric series with gaps.
\newblock {\em J. Math. Mech.}, 9:203--227, 1960.

\bibitem{ryou}
Donggeun Ryou.
\newblock {N}ear{-}optimal restriction estimates for {C}antor sets on the parabola.
\newblock To appear: \textit{IMRN}. arXiv:2301.08651.

\bibitem{Ryou2022}
Donggeun Ryou.
\newblock A variant of the {$\Lambda(p)$}-set problem in {O}rlicz spaces.
\newblock {\em Math. Z.}, 302(4):2545--2566, 2022.

\bibitem{SZ}
Andreas Seeger and Sarah Ziesler.
\newblock Riesz means associated with convex domains in the plane.
\newblock {\em Math. Z.}, 236(4):643--676, 2001.

\bibitem{sidon}
S.~Sidon.
\newblock Ein {S}atz \"{u}ber trigonometrische {P}olynome und seine {A}nwendung in der {T}heorie der {F}ourier-{R}eihen.
\newblock {\em Math. Ann.}, 106(1):536--539, 1932.

\bibitem{Talagrand2014}
Michel Talagrand.
\newblock {\em Upper and lower bounds for stochastic processes}, volume~60 of {\em Ergebnisse der Mathematik und ihrer Grenzgebiete. 3. Folge. A Series of Modern Surveys in Mathematics [Results in Mathematics and Related Areas. 3rd Series. A Series of Modern Surveys in Mathematics]}.
\newblock Springer, Heidelberg, 2014.
\newblock Modern methods and classical problems.

\bibitem{wooley}
Trevor~D. Wooley.
\newblock Nested efficient congruencing and relatives of {V}inogradov's mean value theorem.
\newblock {\em Proc. Lond. Math. Soc. (3)}, 118(4):942--1016, 2019.

\end{thebibliography}
\end{document}